\documentclass[11pt,oneside]{amsart}

\usepackage{amsmath,amsthm,amssymb,amsfonts,amscd,verbatim}
\usepackage{url}
\usepackage{color}
\usepackage{setspace}
\usepackage{enumerate}
\usepackage[all,cmtip]{xy}
\usepackage[normalem]{ulem}

\synctex=1
\linespread{1}

\usepackage{epsfig}
\usepackage{graphicx}
\usepackage{wrapfig}
\usepackage{mathrsfs}
\usepackage{pinlabel}

\usepackage{subfig}
\usepackage{arydshln}

%\gtart  

\definecolor{grey}{rgb}{0.7,0.7,0.7}

%\psfigdriver{dvips}
%\usepackage{rotating}
%\rotdriver{dvips}

%\input{xy}
%\xyoption{all}
%\objectmargin={3mm}

%%%%%%%%%%%%%%%%%%%%%%%%%%%%%%%%%%%%%%%%%%%%%%%%%%%
% KEYS AND NOTES:

% Comment out the following line to hide the labels:
%\usepackage[notref,notcite]{showkeys}

\newcounter{notes}%

\newcommand{\ignore}[1]{}
%\linenumbers

%%%%%%%%%%%%%%%%%%%%%%%%%%%%%%%%%%%%%%%%%%%%%%%%%%%

\theoremstyle{plain}
\newtheorem{theorem}{Theorem}
\newtheorem{proposition}[theorem]{Proposition}
\newtheorem{corollary}[theorem]{Corollary}

\newtheorem{lemma}[theorem]{Lemma}

\newtheoremstyle{theoremwithref}{}{}{\itshape}{}{\bfseries}{.}{.5em}{#1 #2 #3}
\theoremstyle{theoremwithref}

\theoremstyle{definition}
\newtheorem{definition}[theorem]{Definition}

\newtheorem{remark}[theorem]{Remark}

\numberwithin{theorem}{section}
\numberwithin{equation}{section}

\setcounter{tocdepth}{1}

%%%%%%%%%%%%%%%%%%%%%%%%%%%%%%%%%%%%%%%%%%%%%%%%%%%

%\newcommand{\D}{\mathrm{d}}
\newcommand{\bs}{\backslash}

\newcommand{\ZZ}{\mathbb{Z}}

\newcommand{\RR}{\mathbb{R}}
\newcommand{\AF}{\mathbb{A}}
\newcommand{\CC}{\mathbb{C}}
\newcommand{\HH}{\mathbb{H}}

\newcommand{\RP}{\mathbb{RP}}
\renewcommand{\SS}{\mathbb{S}}

\newcommand{\SL}{\mathrm{SL}}
\newcommand{\GL}{\mathrm{GL}}
\newcommand{\SO}{\mathrm{SO}}
\newcommand{\OO}{\mathrm{O}}
\newcommand{\PO}{\mathrm{PO}}

\newcommand{\PSL}{\mathrm{PSL}}
\newcommand{\PGL}{\mathrm{PGL}}

\newcommand{\g}{\mathfrak{g}}
\newcommand{\so}{\mathfrak{so}}
\newcommand{\ssl}{\mathfrak{sl}}
\newcommand{\gl}{\mathfrak{gl}}

\newcommand{\Ad}{\operatorname{Ad}}

\newcommand{\Hom}{\mathrm{Hom}}

\newcommand{\ie}{ie\ }
\newcommand{\eg}{e.g.\ }

\newcommand{\Rep}{\Hom}
\newcommand{\Char}{\chi}

\newcommand{\fund}[1]{\pi_1 #1}

\newcommand{\slice}{\mathscr{S}}
\newcommand{\res}{\mathsf{res}}
\newcommand{\rhyp}{\rho_{hyp}}

\newcommand{\co}{\!:}

%%%%%%%%%%%%%%%%%%%%%%%%%%%%%%%%%%%%%%%%%%%%%%%%%%%

\title[Convex projective structures on non-hyperbolic 3--manifolds]{Convex projective structures on \\ non-hyperbolic three-manifolds}

\author{Samuel A. Ballas}
%\givenname{Samuel}
%\surname{Ballas}
\address{Department of Mathematics, Florida State University, Tallahassee, FL 32306, USA}
\email{ballas@math.fsu.edu}
\urladdr{https://www.math.fsu.edu/~ballas/}

\author{Jeffrey Danciger}
%\givenname{Jeffrey}
%\surname{Danciger}
\address{Department of Mathematics, The University of Texas at Austin, 1 University Station C1200, Austin, TX 78712, USA}
\email{jdanciger@math.utexas.edu}
\urladdr{https://www.ma.utexas.edu/users/jdanciger/}

\author{Gye-Seon Lee}
%\givenname{Gye-Seon}
%\surname{Lee}
\address{Mathematisches Institut, Ruprecht-Karls-Universit\"{a}t Heidelberg, D-69120 Heidelberg, Germany}
\email{lee@mathi.uni-heidelberg.de}
\urladdr{https://www.mathi.uni-heidelberg.de/~lee/}

%\keyword{real projective structures, three-manifolds, moduli spaces, representations of groups, divisible convex sets}
%\subject{primary}{msc2010}{57M50}
%\subject{secondary}{msc2010}{57M60, 20H10, 57S30, 53A20}

%\arxivreference{1508.04794}

%%%%%%%%%%%%%%%%%%%%%%%%%%%%%%%%%%%%%%%%%%%%%%%%%%%

%%%%%%%%%%%%%%%%%%%%%%%%%%%%%%%%%%%%%%%%%%%%%%%%%%%
\begin{document}

\begin{abstract}
Y. Benoist proved that if a closed three-manifold M admits an indecomposable convex real projective structure, then M is topologically the union along tori and Klein bottles of finitely many sub-manifolds each of which admits a complete finite volume hyperbolic structure on its interior. We describe some initial results in the direction of a potential converse to Benoist's theorem. We show that a cusped hyperbolic three-manifold may, under certain assumptions, be deformed to convex projective structures with totally geodesic torus boundary. Such structures may be convexly glued together whenever the geometry at the boundary matches up. In particular, we prove that many doubles of cusped hyperbolic three-manifolds admit convex projective structures.
\end{abstract}

\maketitle

%%%%%%%%%%%%%%%%%%%%%%%%%%%%%%%%%%%%%%%%%%%%%%%%%%%
\section{Introduction}
The previous decade has seen tremendous progress in the study of three-dimensional manifolds. Much of that progress stems from Perelman's proof of Thurston's Geometrization Conjecture which states that any closed orientable prime three-manifold admits a decomposition into geometric pieces modeled on the eight homogeneous Thurston geometries. However, because these geometric pieces do not glue together in any sensible geometric way, there are some questions about three-manifolds for which a Thurston geometric decomposition of the manifold may not be useful.
One example is the question of linearity of three-manifold groups, \ie whether a three-manifold fundamental group admits a faithful linear representation and in which dimensions. While, in most cases, the Thurston geometric structure on each piece of a geometric decomposition determines a faithful linear representation of its fundamental group, these representations can not be directly synthesized into a representation of the fundamental group of the entire manifold. 
In order to make progress on this and other problems, it is natural, given a manifold of interest, to search for a homogeneous geometry capable of describing the entire manifold all at once.

This article studies properly convex real projective structures on three-manifolds.
A domain $\Omega$ in the real projective space $\RP^n$ is called properly convex if there is an affine chart containing $\Omega$ in which $\Omega$ is convex and bounded. A properly convex projective $n$--manifold is the quotient $\Gamma \backslash \Omega$ of a convex domain $\Omega$ by a discrete group $\Gamma$ of projective transformations preserving $\Omega$.
Given a manifold $N$, a properly convex projective structure on $N$ is a diffeomorphism of $N$ with some properly convex projective manifold $\Gamma \backslash \Omega$, considered up to certain equivalence. A convex projective structure therefore induces a representation, called the holonomy representation, identifying $\pi_1 N$ with the discrete subgroup $\Gamma \subset \PGL_{n+1} \RR$.
Hyperbolic structures are special examples of convex real projective structures, but there are many non-hyperbolic manifolds that admit such structures as well. See Benoist~\cite{BenoistCD4,BenoistQ} or Kapovich~\cite{Kapovich} for some examples. See Benoist~\cite{Benoist-survey} for a survey of the subject of convex projective structures on closed manifolds.

We mention that there are simple examples of convex projective structures on three-manifolds coming from a convex hull construction applied to lower dimensional domains; such structures are called \emph{decomposable} and are not of interest to us in the present article. By work of Benoist~\cite{BenoistCD4}, the Thurston geometric decomposition of any closed three-manifold that admits an indecomposable properly convex projective structure contains only hyperbolic pieces glued together along tori and Klein bottles. We are concerned with the converse problem: If a closed three-manifold $N$ has geometric decomposition containing only hyperbolic pieces, must $N$ admit a properly convex projective structure? Our main theorem gives a positive answer to this question in a special case.

\begin{theorem}\label{thm:main}
Let $M$ be a compact, connected, orientable three-manifold with a union of tori as boundary such that the interior of $M$ admits a finite volume hyperbolic structure which is infinitesimally projectively rigid rel boundary. Then the double $N = 2M$ of $M$ admits a properly convex projective structure.
\end{theorem}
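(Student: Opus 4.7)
The plan is to deform the given finite volume hyperbolic structure on the interior of $M$ to a properly convex projective structure on $M$ itself with totally geodesic torus boundary, and then to convexly glue two copies of this structure to obtain the desired structure on $N = 2M$. The first step is to produce, using the infinitesimal projective rigidity rel boundary hypothesis, a smooth family of representations $\rho_t \colon \pi_1 M \to \PGL_4(\RR)$ with $\rho_0 = \rhyp$. The rigidity assumption says that every infinitesimal deformation of $\rhyp$ is detected by its restriction to $\pi_1 \partial M$, and a standard Poincar\'e--Lefschetz duality computation upgrades this to the statement that the restriction map from the $\PGL_4(\RR)$-character variety of $\pi_1 M$ to that of $\pi_1 \partial M$ is a smooth immersion near $[\rhyp]$. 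One may therefore prescribe the peripheral deformation freely. I would choose the deformation so that on each torus cusp, the parabolic pair of generators deforms into a pair of commuting $\RR$-diagonalizable elements of $\PGL_4(\RR)$ whose joint action preserves a totally geodesic $\RP^2 \subset \RP^3$ on which they act by translations in an appropriate affine chart. Such a peripheral representation is precisely the holonomy of a \emph{totally geodesic torus end} of a convex projective $3$-manifold---an end foliated by parallel totally geodesic tori, along any one of which one may truncate to obtain totally geodesic torus boundary.

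The main step, and the principal technical challenge, is to realize $\rho_t$, for small $t > 0$, as the holonomy of an actual properly convex projective structure on $M$ with totally geodesic torus boundary, rather than merely as an abstract deformation of the representation. I would construct a deformed developing map by starting with the hyperbolic one, replacing each cusp neighborhood with an explicit projective model that opens the parabolic end into the totally geodesic torus end described above, and interpolating over a compact cobordism region joining this cusp model to the deformed interior. Proper convexity of the interior image should follow from openness of proper convexity for sufficiently small deformations, but the delicate point is verifying convexity right up to the newly opened boundary: the developing image must lie on one side of each totally geodesic $\RP^2$ supporting a boundary torus, and the local picture there must interface correctly with the convex pattern coming from the cusp model. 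I expect this convexity analysis at the newly opened boundary to be where the main work lies.

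Given a properly convex projective structure on $M$ with totally geodesic torus boundary, I would then construct the structure on $N = 2M$ by a convex doubling argument. The fundamental group $\pi_1 N$ decomposes as an amalgamated product of two copies of $\pi_1 M$ over the peripheral subgroups, and since the boundary holonomies of the two copies of $(M, \rho_t)$ agree under the orientation-reversing gluing, the two representations combine into a representation $\rho_N \colon \pi_1 N \to \PGL_4(\RR)$. A properly convex domain $\Omega_N \subset \RP^3$ preserved by $\rho_N$ is built by reflecting the properly convex domain covering $M$ across the totally geodesic hyperplanes of $\RP^3$ supporting the boundary tori and taking the orbit closure; proper convexity of $\Omega_N$ follows from the general principle, foreshadowed in the abstract, that convex domains glued along shared totally geodesic boundary hyperplanes remain properly convex. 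The quotient $\rho_N(\pi_1 N) \backslash \Omega_N$ is then the desired convex projective structure on $N$.
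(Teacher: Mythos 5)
Your overall strategy (open each cusp into a totally geodesic torus end, then convexly glue two copies) matches the paper's, but there is a genuine gap at the start. Infinitesimal projective rigidity rel boundary, together with the ``half lives, half dies'' argument, makes the restriction map $\res: \Rep(\pi_1 M, \PGL_4 \RR) \to \Rep(\pi_1\partial M, \PGL_4\RR)$ a local submersion onto a submanifold of codimension three times the number of cusps near $\rhyp$ (Theorem~\ref{smoothnessthm} and Theorem~\ref{thm:submanifold}); it is emphatically \emph{not} onto a full neighborhood, so your assertion that ``one may therefore prescribe the peripheral deformation freely'' is false, and an immersion into the character variety would give you even less. Showing that the codimension-$3k$ image of $\res$ actually contains boundary representations that are diagonalizable over $\RR$ and satisfy the middle eigenvalue condition is the technical heart of the paper: one constructs an explicit four-dimensional partial slice $\slice$ of such representations in each $\Rep(\Delta_i,\tilde G)$ (Lemma~\ref{lem:slice} and the geometric Cartan-subalgebra family of Section~\ref{subsec:slice}), and then proves transversality of $\res$ with the product $S$ of these slices using Calabi--Weil rigidity and a dimension count (Lemmas~\ref{lem:nontrivial-def}, \ref{lem:complex-evals}, \ref{lem:transverse}, and Theorem~\ref{thm:diagonalizable}). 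Your argument assumes exactly this transversality step without any justification, so the deformation $\rho_t$ you would need may not exist by the reasoning you give.

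A smaller remark on your second step: you expect the verification of convexity near the opened boundary to be where the main work lies and imagine building the developing map by hand via an interpolation. The paper instead invokes the holonomy principle of Cooper--Long--Tillmann (Theorem~\ref{thm:koszul}), also available from Choi, which says that a small perturbation of the holonomy is again the holonomy of a properly convex structure provided the restriction to each end is the holonomy of a generalized cusp; the geometry near the opened boundary is then analyzed in Section~\ref{boundarygeom} via the nested domains $\Omega_{min}\subset \Omega\subset\Omega_{max}$ and truncation along principal totally geodesic triangles, and it is Lemma~\ref{lem:attractors} (using the middle eigenvalue condition) that guarantees the three expected vertices of the boundary triangle actually lie in $\partial\Omega$. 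Your doubling step is essentially Theorem~\ref{thm:convex-gluing}, proved in the paper by a ping-pong argument; your sketch is consistent with it.
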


Using cube complex techniques, Przytycki--Wise~\cite{PrzWi} showed that any mixed three-manifold, and therefore any manifold $N$ as in Theorem~\ref{thm:main}, has linear fundamental group.
However, their methods give no control on the dimension of the linear representation.  On the 
other hand, it was shown by Button~\cite{Button} that there exist three-manifold groups which admit no linear representation in dimension $4$ or lower. To determine the smallest possible dimension of a linear
representation of a three-manifold group remains an interesting open question.
Since the holonomy representation of a convex projective structure lifts to the special linear group (see Section~\ref{ssec:propconvex}), we obtain:

\begin{corollary}
Let $N = 2M$ be as in Theorem~\ref{thm:main}. Then $\pi_1 N$ admits a discrete faithful representation into $\SL_4 \RR$.
\end{corollary}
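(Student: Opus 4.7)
The plan is essentially to apply Theorem~\ref{thm:main} and then invoke the standard observation, noted by the authors immediately above the corollary, that the holonomy of a properly convex projective structure lifts from $\PGL_4(\RR)$ to $\SL_4(\RR)$.

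First, I apply Theorem~\ref{thm:main} to obtain a properly convex projective structure on $N = 2M$. By definition, this consists of a properly convex open domain $\Omega \subset \RP^3$, a discrete subgroup $\Gamma \subset \PGL_4(\RR)$ preserving $\Omega$ and acting freely and properly discontinuously on it, and a diffeomorphism $N \cong \Gamma \backslash \Omega$. Since $\Omega$ is properly convex, some affine chart realizes it as a convex bounded subset of $\RR^3$, so $\Omega$ is contractible and is in particular the universal cover of $N$. Thus $\Gamma$ is canonically identified with $\pi_1 N$ acting by deck transformations, and the inclusion $\Gamma \hookrightarrow \PGL_4(\RR)$ gives a discrete faithful representation $\rho \colon \pi_1 N \to \PGL_4(\RR)$.

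Next, I lift $\rho$ along the projection $\SL_4(\RR) \to \PGL_4(\RR)$. The preimage $\widetilde \Omega \subset \RR^4 \setminus \{0\}$ of $\Omega$ under the quotient map $\RR^4 \setminus \{0\} \to \RP^3$ is a properly convex cone with two connected components, interchanged by $-\id$. For each $g \in \Gamma$, exactly one of the two lifts of $g$ to $\GL_4(\RR)$ preserves these components setwise; after rescaling by a positive scalar to normalize $|\det| = 1$, this yields a unique lift $\widetilde g$ with $\det \widetilde g = \pm 1$, and hence a homomorphism $\widetilde \rho \colon \pi_1 N \to \{A \in \GL_4(\RR) : \det A = \pm 1\}$. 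The associated sign character $\pi_1 N \to \{\pm 1\}$ records whether each element acts on $\Omega$ preserving or reversing orientation; since $N$ is orientable as a double of an orientable manifold, every element of $\Gamma$ acts orientation-preservingly, so the character is trivial. Hence $\widetilde \rho$ lands in $\SL_4(\RR)$, and being an honest lift of $\rho$ it is itself discrete and faithful, providing the desired representation.

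The main work has already been absorbed into Theorem~\ref{thm:main}; this corollary is a short deduction using standard properties of holonomies of properly convex projective structures, as reviewed in Section~\ref{ssec:propconvex}. The only subtlety is the sign-character step in the lifting, which is where orientability of $N$ is used, and that is the place I would take care.
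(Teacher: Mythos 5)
Your proof is correct and follows essentially the same approach as the paper: apply Theorem~\ref{thm:main} and then lift the holonomy via the canonical section $\PGL(\Omega) \to \SL^{\pm}(\Omega)$ from Section~\ref{ssec:propconvex}. You go slightly further than the paper's one-line justification by explicitly checking, via orientability of $N$ (and hence triviality of the determinant character), that the preferred lift lands in $\SL_4\RR$ rather than merely in $\SL^{\pm}_4\RR$ --- a worthwhile detail that the paper leaves implicit.
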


The Corollary says that the property of linearity of the two hyperbolic pieces in $N$ may be extended to all of $N$. Indeed, the proof of Theorem~\ref{thm:main} will show that the representation of $\pi_1 N$ in the Corollary may be chosen such that the restriction to each copy of $\pi_1 M$ is arbitrarily close to the holonomy representation of the finite volume hyperbolic structure on $M$.

We note that the assumption of infinitesimal projective rigidity rel boundary (Definition~\ref{def:infrig}), studied by Heusener--Porti in \cite{HePo}, is satisfied for many hyperbolic manifolds, for example for infinitely many fillings of one component of the Whitehead link. On the other hand, this assumption does fail in certain cases, for example when the hyperbolic structure on $M$ contains a totally geodesic surface. A related rigidity condition in closed hyperbolic three-manifolds was studied in Cooper--Long--Thistlethwaite~\cite{CoLoThist2,CoLoThist1} and shown experimentally to hold very often in small examples. However, in the setting of cusped hyperbolic manifolds, it is not yet known in what degree of generality infinitesimal projective rigidity rel boundary will hold.
Nonetheless, the Theorem gives a large new source of examples of convex projective structures on non-hyperbolic three-manifolds. The only other known examples come from taking covers of convex projective reflection orbifolds. Benoist~\cite{BenoistCD4} constructed the first example of such an orbifold by realizing a truncation polyhedron, \ie a polyhedron obtained from a three-dimensional tetrahedron by successively truncating vertices, as a reflection polyhedron in projective space.
Later, Marquis~\cite{Mar} completely classified the three-dimensional convex projective orbifolds obtained from projective truncation polyhedra.
Generalizing Benoist's examples, Choi--Lee--Marquis~\cite{CGLM} are currently classifying convex projective reflection polyhedra and studying their deformation theory.

\subsection{Convex projective structures with totally geodesic boundary}
The proof of Theorem~\ref{thm:main} is motivated by Benoist's beautiful theory~\cite{BenoistCD4} describing the geometry of convex projective structures on three-manifolds, which we briefly review here. If $N = \Gamma \backslash \Omega$ is a properly convex projective closed three-manifold which is indecomposable, then either (i) $\Omega$ is strictly convex and $N$ admits a hyperbolic structure or (ii) $\Omega$ is not strictly convex and the points on $\partial \Omega$ at which strict convexity fails form a dense set in $\partial \Omega$ each component of which bounds a \emph{properly embedded triangle}, which is the intersection of $\Omega$ with a hyperplane. We will refer to these triangles as \emph{totally geodesic triangles}. Each totally geodesic triangle descends to a totally geodesic embedded torus or Klein bottle in $N$ and after cutting along these tori and Klein bottles, $N$ is decomposed into a union of properly convex sub-manifolds $M_i$ with \emph{totally geodesic boundary}.
Each piece in this decomposition (which topologically is exactly the JSJ or geometric decomposition of $N$) admits a hyperbolic structure.

In light of the Benoist theory, in order to construct convex projective structures on non-hyperbolic three-manifolds, we first need a source of convex projective building blocks, \ie convex projective manifolds with totally geodesic boundary.
Under suitable assumptions, we are able to find such structures by deforming the hyperbolic structure.

\begin{theorem}\label{thm:deform}
Let $M$ be a connected, orientable, finite volume, non-compact hyperbolic three-manifold which is infinitesimally projectively rigid rel boundary. Then $M$ admits nearby properly convex projective structures where each cusp becomes a principal totally geodesic boundary component.
\end{theorem}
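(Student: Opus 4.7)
The plan is to deform the hyperbolic holonomy $\rhyp \colon \fund{M} \to \SO(3,1) \subset \SL_4(\RR)$ inside the $\SL_4(\RR)$ character variety, forcing at each peripheral torus a transition from parabolic holonomy to the holonomy of a principal totally geodesic cusp, and then to upgrade the resulting representations to honest properly convex projective structures on $M$.

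First, I would pin down, for each boundary torus $T_i$ with $\fund{T_i} = \ZZ^2$, a smooth two-parameter slice $\slice_i \subset \rvar{\ZZ^2}$ of representations realized as holonomies of principal totally geodesic cusps: commuting pairs of elements of $\PGL_4(\RR)$ jointly preserving a properly embedded triangle in $\RP^3$ and acting on a neighborhood of the triangle with quotient a convex projective cusp whose compactification meets the triangle as a totally geodesic torus. The parabolic pair $\rhyp|_{\fund{T_i}}$ sits as a distinguished degenerate limit of $\slice_i$. I denote by $\slice \subset \rvar{\partial M}$ the product of these slices.

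Second, I would show that the restriction to the boundary sends a neighborhood of $\rhyp$ in $\rvar{\fund{M}}$ onto a neighborhood of $\rhyp|_{\partial M}$ in $\slice$. Infinitesimal projective rigidity rel boundary is, by definition, the injectivity of the restriction map $\res_\ast \colon H^1(\fund{M}; \ssl_4) \to H^1(\fund{\partial M}; \ssl_4)$ on group cohomology; by Poincar\'e--Lefschetz duality the image is a Lagrangian subspace for the cup-product symplectic form on $H^1(\fund{\partial M}; \ssl_4)$ and in particular has half-dimension. A direct infinitesimal computation at each cusp, using the normal form of a parabolic in $\SL_4(\RR)$, shows that $T_{\rhyp|_{\partial M}}\slice$ is a Lagrangian complement to this image. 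The implicit function theorem on the character variety (in the form used by Heusener--Porti) then produces, for each sufficiently small choice of slice parameters, a representation $\rho_t$ close to $\rhyp$ in $\rvar{\fund{M}}$ whose boundary restriction is exactly the prescribed element of $\slice$.

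Finally, I would realize $\rho_t$ as the holonomy of a properly convex projective structure on $M$. The cusps of $M$ come equipped with explicit model neighborhoods (principal totally geodesic cusps with holonomy $\rho_t|_{\fund{T_i}}$), while on a compact core of $M$ the Ehresmann--Thurston deformation theorem upgrades the hyperbolic structure to a nearby projective structure with holonomy $\rho_t$. Matching these on collars of $\partial M^{\mathrm{core}}$ yields a projective structure $P_t$ on $M$ with principal totally geodesic torus boundary at each cusp. The initial structure is hyperbolic, hence properly convex, and proper convexity is open in the deformation space of $(G,X)$-structures, so $P_t$ remains properly convex for $t$ small.

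The main obstacle will be this last step: ensuring that the structure obtained by gluing the compact-core deformation to the explicit cusp models is actually properly convex and not merely convex locally. Concretely, one must verify that the developing map remains an embedding, that the developed image is bounded in some affine chart, and that each totally geodesic boundary torus lifts to a disjoint family of triangles on the boundary of the developed domain without introducing accidental concavity along the gluing collars. The transversality of $\res_\ast$ with $T\slice$ at each cusp is a genuine calculation, but it is controlled by the explicit infinitesimal deformations of a parabolic in $\SO(3,1) \subset \SL_4(\RR)$; the convex-geometric gluing is the substantive part of the argument.
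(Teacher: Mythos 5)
Your overall strategy matches the paper's --- build an explicit slice of boundary representations realizing principal totally geodesic cusps, show the restriction map from $\Rep(\Gamma, \tilde G)$ is transverse to it, and upgrade the resulting representations to convex projective structures --- but there are two concrete gaps.

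First, the slice cannot be a Lagrangian complement of the image of $\res_*$, and your dimension count does not close. The image $\res_*\bigl(H^1_{\rhyp}(\Gamma, \g)\bigr)$ is indeed Lagrangian, hence $3k$-dimensional, inside the $6k$-dimensional $H^1_{\rhyp}(\partial M, \g)$. If $T\slice$ were a Lagrangian complement --- also $3k$-dimensional, meeting the image of $\res_*$ only at zero --- then the set of representations of $\Gamma$ near $\rhyp$ whose boundary restriction is conjugate into $\slice$ would be zero-dimensional, \ie $\rhyp$ alone, and you would produce no deformations at all. (Moreover, a ``two-parameter slice'' per cusp gives $\dim T\slice = 2k$, which does not match the Lagrangian framing.) The paper's slice $\slice_i$ is \emph{four}-dimensional at each cusp (equation~\eqref{eqn:defnofslice}): it carries the two ``opening'' directions $(a,b)$ you describe \emph{and} two cusp-shape directions $(x,y)$. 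With $\dim V_S = 4k$, it pairs with the $2k$-dimensional $\res_*\bigl(H^1_{\rhyp}(\Gamma, \so(3,1))\bigr)$ to span $\bigoplus_i H^1_{\rhyp}(\Delta_i, \g)$, and the intersection of $V_S$ with the full $3k$-dimensional image of $\res_*$ is then $k$-dimensional (Lemma~\ref{lem:transverse}) --- a positive-dimensional family, as needed. You also cannot ask the boundary restriction to land \emph{exactly} in $\slice$; one must allow independent conjugation at each cusp, which is why the paper introduces the augmented restriction map (Definition~\ref{augrest}, Theorem~\ref{thm:submanifold}). Finally, that $\slice$ is never tangent to conjugation orbits is not automatic from first-order eigenvalue behavior; see Remark~\ref{rem:mistake}, and compare Lemma~\ref{lem:slice} and Proposition~\ref{prop:algebras-transverse}.

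Second, the claim that ``proper convexity is open in the deformation space of $(G,X)$-structures'' is false when $M$ is non-compact. Koszul's openness theorem is for closed manifolds. The paper gives a counterexample: in Thurston's generalized Dehn surgery space, the complete hyperbolic structure on a cusped $M$ is strictly convex, but every nearby incomplete hyperbolic structure is not convex (the developing map is an infinite-sheeted covering onto the complement of a line family). You flag this as ``the main obstacle,'' but the proposed direct gluing of explicit cusp models onto a compact-core deformation does not by itself explain why the developing map stays an embedding onto a convex domain --- which is exactly what fails in the Dehn surgery example. The paper instead invokes the Cooper--Long--Tillmann holonomy principle for projective structures with generalized cusps (Theorem~\ref{thm:koszul}), and the geometric analysis in Section~\ref{boundarygeom} then shows that the resulting properly convex structure has principal totally geodesic boundary after removing a collar.
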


The term \emph{principal totally geodesic boundary} (see Definition~\ref{def:principal}, following Goldman~\cite{Go} in the setting of convex projective surfaces) refers to a totally geodesic boundary component which admits a convex thickening. That all totally geodesic boundary components are principal is a necessary condition for a convex projective manifold to appear as a sub-manifold in the Benoist decomposition of a closed convex projective manifold described above.

The deformations in the previous theorem may be understood by analogy with the related phenomenon of the deformation of a two-dimensional finite volume hyperbolic surface whose cusp ``opens up'' to a very small geodesic circle coming in from infinity. Hyperbolic surfaces with geodesic boundary are indeed convex projective structures; the associated convex domain, a subset of the hyperbolic plane, has in its boundary a dense collection of segments, each of which covers the geodesic boundary circle.
Although cusp opening is not possible in three-dimensional hyperbolic geometry by Mostow--Prasad rigidity, Theorem \ref{thm:deform} shows that it is possible in the category of convex projective manifolds.
Indeed, as the convex projective structures in the conclusion of the Theorem approach the hyperbolic structure, the totally geodesic boundary tori become very small (with respect to the Hilbert metric) and escape to infinity as the totally geodesic triangles in the boundary of the associated convex domain converge to points; see Figure~\ref{fig:intro}.

\begin{figure}[ht!]
\centering
 \includegraphics[scale=.28]{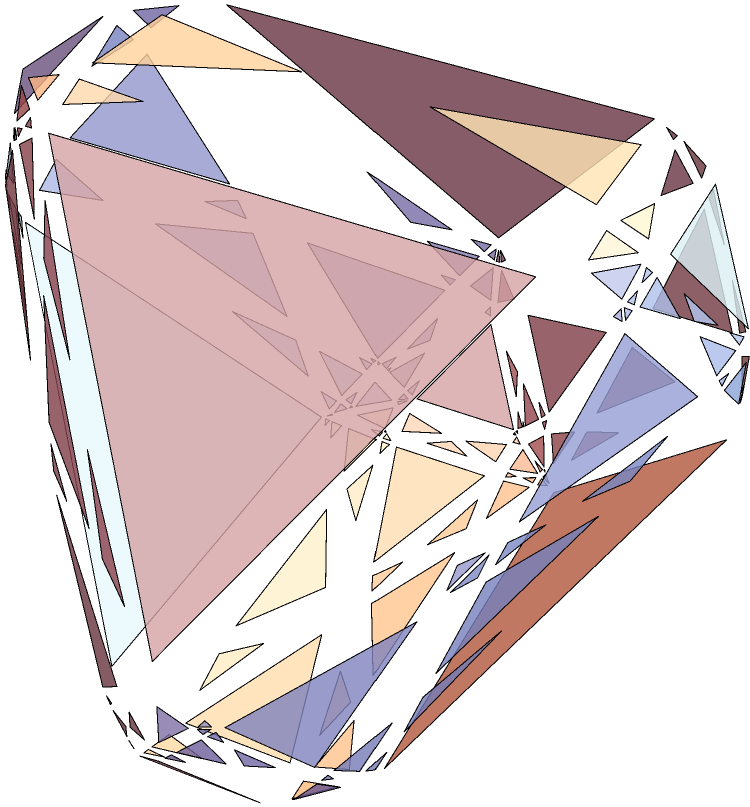} \quad
 \includegraphics[scale=.28]{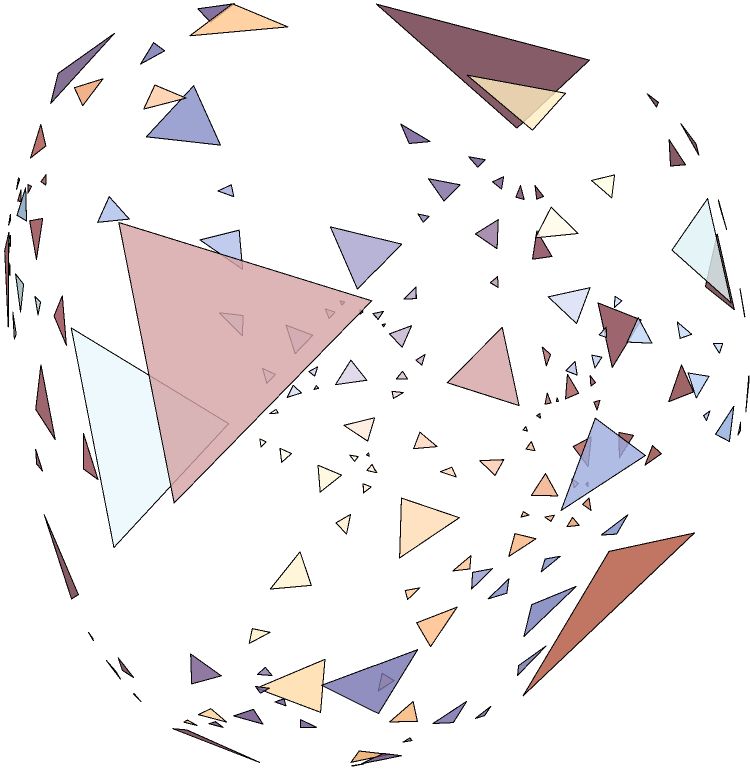} \quad
 \includegraphics[scale=.28]{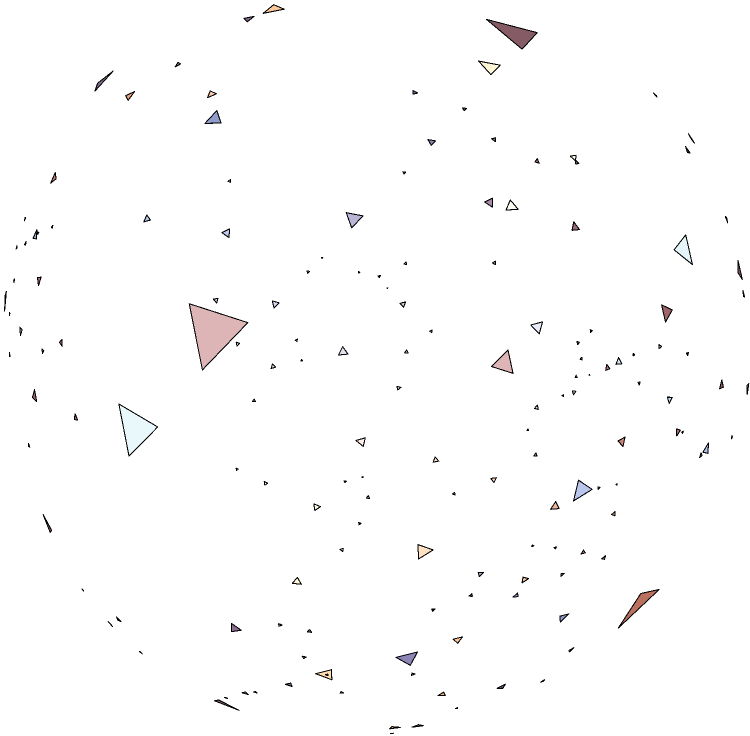}
 \caption{The principal totally geodesic triangles in the boundaries of the convex domains from Theorem \ref{thm:deform} collapse to points as the convex domains converge to the round ball.}
 \label{fig:intro}
\end{figure}

 The proof of Theorem \ref{thm:deform} boils down to a transversality argument in~the space $\Rep(\pi_1 \partial M, \PGL_4 \RR)$ of representations of the peripheral fundamental groups $\pi_1 \partial M$. The assumption of infinitesimal projective rigidity rel boundary, also appearing in Theorem~\ref{thm:main}, guarantees that $\Rep(\pi_1 M, \PGL_4 \RR)$ is smooth at the holonomy representation $\rho_{hyp}$ of the complete hyperbolic structure and that (an augmented version of) the restriction map $\res \co \Rep(\pi_1 M, \PGL_4 \RR)  \to \Rep(\pi_1 \partial M, \PGL_4 \RR)$ submerses a neighborhood of $\rho_{hyp}$ onto a submanifold of $\Rep(\pi_1 \partial M, \PGL_4 \RR)$.
We prove that this submanifold transversely intersects a certain family of diagonalizable representations constructed explicitly in Lemma~\ref{lem:slice}.
This family of diagonalizable representations is a partial slice (never tangent to conjugation orbits) whose
construction is geometric in nature (see
Section \ref{subsec:slice} for details). For dimensional reasons the intersection is positive dimensional and thus we can find representations of $\pi_1 M$ into $\PGL_4 \RR$ whose restriction to the fundamental group of each boundary component is diagonalizable over the reals. Finally, we are able to conclude that the resulting representations are the holonomy representations of convex projective
structures as in the Theorem using a ``holonomy principle'' that
follows from recent work of Cooper--Long--Tillmann~\cite{CoLoTi} or Choi~\cite{Choi2}.

We note that the proof of Theorem~\ref{thm:deform} encounters immediate problems upon removing the assumption that $M$ is orientable. Indeed, Poincar\'e duality, applied to both $M$ and its torus boundary components, is used to determine the dimensions of relevant representation spaces.

\subsection{Convex gluing}

Consider a finite disjoint union $\mathscr M$ of properly convex projective three-manifolds and two disjoint closed sub-manifolds $\partial$ and $\partial'$ of its boundary, each of which is a finite disjoint union of principal totally geodesic tori. 
Given a homeomorphism $f \co \partial \to \partial'$, let $\mathscr M_f$ denote the (topological) manifold obtained by identifying $\partial$ with $\partial'$ via $f$; the topology of $\mathscr M_f$ depends only on the isotopy class of $f$. By straightforward general arguments, the projective structure on $\mathscr M$ defines a projective structure (in fact many different structures) on $\mathscr M_f$ provided that the \emph{holonomy matching condition} is satisfied: For each component $\partial_j$ of $\partial$, there exists $g_j \in \PGL_4 \RR$ such that
\begin{equation}\label{eqn:matching}
\rho_j'( f_* \gamma)= g_j\rho_j(\gamma)g_j^{-1}
\end{equation}
for all $\gamma \in  \pi_1 \partial_j$, where $\rho_j$ and $\rho_j'$ denote the holonomy representations for the projective structures on neighborhoods of $\partial_j$ and $\partial_j' = f(\partial_j)$ respectively. Indeed, if this condition is satisfied then $f$ may be isotoped to a projective map on each component of $\partial$ and the pieces of $\mathscr M$ may be glued together projectively.
We prove that the resulting projective structure on $\mathscr M_f$ may in fact be taken to be properly convex. The following theorem is the three-dimensional analogue of a result of Goldman~\cite{Go} in the setting of convex projective surfaces.

\begin{theorem} \label{thm:convex-gluing}
Let $\mathscr M$ be a finite disjoint union of properly convex projective three-manifolds, let $\partial$ and $\partial'$ be  disjoint closed sub-manifolds of its boundary, each of which is a disjoint union of principal totally geodesic tori, and let $f \co \partial \to \partial'$ be a homeomorphism. Assume that the holonomy matching condition~\eqref{eqn:matching} is satisfied along all components of $\partial$. Then $\mathscr M_f$ admits a projective structure in which all connected components are properly convex. The natural map $\mathscr M \to \mathscr M_f$ is isotopic to a projective map.
\end{theorem}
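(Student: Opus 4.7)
My plan is to realize the gluing $f$ as a projective isotopy, build a developing map for each component of $\mathscr M_f$ by inductively attaching developing images of pieces along totally geodesic triangles, and show the resulting image is properly convex. The strategy parallels Goldman's two-dimensional gluing argument in \cite{Go}, with the principal totally geodesic hypothesis playing the role of its lower-dimensional analogue.

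\textbf{Projective gluing.} The holonomy matching condition \eqref{eqn:matching} provides, for each boundary component $\partial_j$ and its partner $\partial_j' = f(\partial_j)$, an element $g_j \in \PGL_4 \RR$ intertwining the two peripheral holonomy representations. A standard developing-map argument then shows that a tubular neighborhood of $\partial_j$ is projectively isomorphic to one of $\partial_j'$ via a map isotopic to $f|_{\partial_j}$; thus $f$ can be isotoped, within its isotopy class, to a projective diffeomorphism on a collar of each component of $\partial$. Gluing via the isotoped $f$ yields a projective structure on $\mathscr M_f$ in which the natural map $\mathscr M \to \mathscr M_f$ is isotopic to a projective map.

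\textbf{Developing map.} Fix a component $N$ of $\mathscr M_f$. Since each gluing torus is two-sided and $\widetilde N$ is simply connected, $\widetilde N$ decomposes as copies of universal covers of pieces of $\mathscr M$ glued along lifts of the gluing tori, with tree-like combinatorics (a Bass--Serre tree $\mathcal T$). Each principal totally geodesic boundary torus lifts to the interior of a totally geodesic triangle on the boundary of the corresponding properly convex developing image: its $\ZZ^2$ stabilizer acts cocompactly and diagonalizably on a properly convex open subset of its invariant hyperplane, which is forced to be an open triangle. Starting from a base piece, we iteratively attach developing images across $\mathcal T$: the holonomy matching places each successive properly convex domain on the opposite side of the hyperplane spanned by the shared triangle with the previous piece. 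The result is a developing map $\mathrm{dev}: \widetilde N \to \RP^3$ whose image $\Omega$ is a union of properly convex domains glued along principal totally geodesic triangles.

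\textbf{Convexity and the main obstacle.} At each interior point $p$ of a gluing triangle $\Delta$, the principal hypothesis on both adjacent pieces ensures that each fills a full half-neighborhood of $p$ cut off by the supporting hyperplane $H$ of $\Delta$ (otherwise a corner of one piece would obstruct any convex thickening across $H$); their union together with $\Delta$ is therefore a convex open neighborhood of $p$. Local convexity at non-gluing points is automatic, so after passing to a suitable affine chart, the simple connectivity of $\widetilde N$ promotes local convexity to convexity of $\Omega$. For proper convexity, each piece admits supporting hyperplanes at the vertices of its gluing triangles, and these separate the adjacent piece because the principal hypothesis places it strictly in the opposite open half-space relative to $H$; propagating these supporting hyperplanes across $\mathcal T$ places $\Omega$ in a common affine chart in which it is bounded. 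The main obstacle is precisely this final step: local convexity follows readily from the principal hypothesis, but controlling the global geometry of $\Omega$ --- ensuring it does not accumulate at infinity along the ``hinges'' where infinitely many pieces meet --- is the technical heart of the argument. With $\Omega$ properly convex, $N = \pi_1 N \backslash \Omega$ is a properly convex projective manifold.
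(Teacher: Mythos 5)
Your overall strategy matches the paper's: decompose the universal cover $\widetilde N$ combinatorially over a Bass--Serre tree, build the developing map tile-by-tile, and argue local convexity at the gluing triangles. But the step you flag as ``the technical heart of the argument'' --- proper convexity of the global developing image --- is precisely the step you leave unresolved, and the sketch you give for it does not work as stated. The three supporting hyperplanes at the vertices of a single gluing triangle $T^{(i)}$ bound the open triangular prism $U^{(i)} = \mathcal T^{(i)}_+ \cup T^{(i)} \cup \mathcal T^{(i)}_-$, which is convex but \emph{not} properly convex (it contains complete affine lines). So ``propagating these supporting hyperplanes across $\mathcal T$'' confines $\Omega$ to a region that is in general unbounded in every affine chart; it does not by itself rule out degeneration as infinitely many tiles pile up toward the edges of the prism.

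The paper closes this gap by proving a sharp ping-pong statement (Lemmas~\ref{lem:ping} and \ref{lem:pong}) that the whole augmented developing image lands inside $\overline{(\Omega_1)_{max}}$, the closure of the \emph{maximal thickening} of a single base piece, which is already known to be properly convex. The ping-pong works because $\Omega_{max}$ is contained in the prism $U^{(i)}$ and, crucially, because of Lemma~\ref{lem:proper-contain}: $\partial \Omega_{max}$ meets $\partial U^{(i)}$ only along $\partial \mathcal T^{(i)}_- \setminus T^{(i)}$, so translates $\gamma_1\overline{\mathcal T^{(2)}}$ for $\gamma_1 \in \Gamma_1\setminus\Delta$ land strictly inside $\mathcal T^{(1)}\setminus\Omega_1$, never touching the prism walls. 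This strict nesting is what keeps the tree of tiles from accumulating at infinity. Your proposal contains no analogue of $\Omega_{max}$ and no analogue of Lemma~\ref{lem:proper-contain}, so the crucial boundedness claim is asserted rather than proved. The rest of the proposal (projective gluing via the matching condition, tree decomposition, local convexity across a principal triangle, reduction of the general statement to an induction over two elementary gluing moves --- amalgam and HNN) is essentially the same as in the paper and is fine.
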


When $\mathscr M = M \sqcup M$ is the disjoint union of two copies of the same properly convex projective manifold $M$ with principal totally geodesic boundary and $f \co \partial M \to \partial M$ is the identity map, then the holonomy matching condition~\eqref{eqn:matching} is trivially satisfied. In this case $\mathscr M_f = 2M$ is the double of $M$. Hence Theorem~\ref{thm:deform} and Theorem~\ref{thm:convex-gluing} imply Theorem~\ref{thm:main}.

In general, any given projective manifold $\mathscr M$ and homeomorphism $f \co \partial \to \partial'$ as in the hypotheses of Theorem~\ref{thm:convex-gluing} are unlikely to satisfy the holonomy matching condition~\eqref{eqn:matching}. To find a properly convex projective structure on the glued manifold $\mathscr M_f$, one may attempt to deform the properly convex projective structures on the connected components of $\mathscr M$ with the goal of aligning the geometry of the principal totally geodesic boundary tori of $\partial$ and $\partial'$ so that~\eqref{eqn:matching} is satisfied. However, global deformation theory problems such as this are in general very difficult. 
Consider for example the case that $\mathscr M = M_1 \sqcup M_2$ is a disjoint union of two three-manifolds, each with boundary homeomorphic to a torus $\partial = \partial M_1$ and $\partial' = \partial M_2$, and that $f \co \partial \to \partial'$ is any fixed gluing homeomorphism. 
 The space of representations $\Rep(\pi_1 \partial_j, A)$ into the diagonal subgroup $A \subset \PGL_4 \RR$ is six-dimensional. Furthermore, the subset of representations which extend to $\pi_1 M_1$ is a half-dimensional (Lagrangian) subvariety as is the subset of representations which extend via $f_*$ to $\pi_1 M_2$. Therefore, the expected intersection between these two sets is zero-dimensional.  We do not know any reason in general to expect this intersection to be non-empty. On the other hand, no example seems to be known in which the intersection turns out to be empty. An experimental study of some basic cases could prove enlightening; the authors hope to conduct such experiments in future work.
Of course, even if the matching problem~\eqref{eqn:matching} is solved at the level of representations, one needs to find convex projective structures realizing those representations as their holonomy representations in order to apply Theorem~\ref{thm:convex-gluing}. The following theorem shows that any deformation of the holonomy representation of a properly convex projective manifold through representations with diagonalizable peripheral holonomy is indeed the holonomy representation of a properly convex projective structure with totally geodesic boundary.

\begin{theorem}\label{thm:closedness}
Let $M$ be a complete finite volume hyperbolic three-manifold. Then the set of holonomy representations of properly convex projective structures on $M$ with principal totally geodesic boundary is closed in the subspace of representations in $\Hom(\pi_1 M, \PGL_4 \RR)$ whose restriction to $\pi_1 \partial M$ is diagonalizable.
\end{theorem}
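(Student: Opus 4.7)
The plan is to establish closedness via a geometric limit argument combined with the ``holonomy principle'' of Cooper--Long--Tillmann \cite{CoLoTi} or Choi \cite{Choi2} already invoked earlier in the paper. Suppose $(\rho_n)$ is a sequence of holonomies of properly convex projective structures on $M$ with principal totally geodesic boundary, converging to $\rho_\infty$ with $\rho_\infty|_{\pi_1 \partial M}$ diagonalizable. For each $n$, let $\Omega_n \subset \RP^3$ be the properly convex domain whose quotient realizes the structure, and for each cusp torus $T_i$ let $\Delta_n^i \subset \partial \Omega_n$ be the principal triangle preserved by the simultaneously diagonalizable group $\rho_n(\pi_1 T_i)$. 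After conjugating the $\rho_n$ so that a fixed lift of a basepoint develops to a common point $x_0 \in \RP^3$, one extracts a Hausdorff-convergent subsequence $\Omega_n \to \Omega_\infty$ using compactness of closed subsets of $\RP^3$. By continuity, $\Omega_\infty$ is closed, convex, contains $x_0$, and is $\rho_\infty(\pi_1 M)$-invariant.

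The central step is to show that $\Omega_\infty$ is properly convex, meaning it has nonempty interior and its closure contains no projective line. At the boundary, the diagonalizability hypothesis on $\rho_\infty|_{\pi_1 \partial M}$ is the key: the common eigenlines of $\rho_n(\pi_1 T_i)$---the three vertices of $\Delta_n^i$ together with a transverse fourth eigenline lying in the interior of $\Omega_n$---converge to common eigenlines of $\rho_\infty(\pi_1 T_i)$, so each $\Delta_n^i$ Hausdorff-converges to a non-degenerate triangle $\Delta_\infty^i \subset \partial \Omega_\infty$ rather than collapsing to a point, as would happen if peripheral eigenvalues coalesced into a parabolic Jordan block. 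For the interior, any non-peripheral closed geodesic $\gamma$ yields a positively proximal element $\rho_n(\gamma)$ preserving $\Omega_n$; proximality persists generically for $\rho_\infty(\gamma)$, providing attracting fixed points whose orbits generate a full-dimensional subset of $\Omega_\infty$, while any projective line in the closure would be $\rho_\infty(\pi_1 M)$-invariant and contradict irreducibility inherited from the properly convex actions of $\rho_n(\pi_1 M)$ on $\Omega_n$.

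Once $\Omega_\infty$ is known to be properly convex, discreteness and faithfulness of $\rho_\infty$ follow from standard facts about groups preserving properly convex domains, and principality of each $\Delta_\infty^i$ is obtained by passing the convex thickenings of the $\Delta_n^i$ to the limit. The quotient $\rho_\infty(\pi_1 M) \backslash \Omega_\infty$ (or an appropriate convex core) is then a properly convex projective manifold with principal totally geodesic torus boundary, and the holonomy principle of \cite{CoLoTi} or \cite{Choi2}, applied to our peripheral data together with the proper convexity of $\Omega_\infty$, identifies this manifold with $M$ and realizes $\rho_\infty$ as the holonomy of a convex projective structure on $M$ in the required form.

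The principal obstacle is the step of ruling out degenerations of $\Omega_\infty$. The subtlest case is a partial collapse of some peripheral triangle $\Delta_n^i$, which would happen precisely if peripheral eigenvalues coalesced into a parabolic Jordan block in the limit; this is exactly what the diagonalizability hypothesis on $\rho_\infty|_{\pi_1 \partial M}$ is designed to prevent. A secondary concern is ensuring the normalization of the $\rho_n$ used to set up the compactness argument is uniform, which can be arranged by first fixing a non-peripheral loxodromic element and normalizing the limiting data of its dynamics, so that the relevant conjugation factors stay in a compact subset of $\PGL_4 \RR$.
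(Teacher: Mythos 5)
Your approach is a genuinely different route from the paper's, and it has gaps that I believe are serious. The paper disposes of this theorem in a few lines: given $\rho_n \to \rho_\infty$ as in the hypothesis, each $\rho_n$ yields (by the doubling case of Theorem~\ref{thm:convex-gluing}) a properly convex projective structure on the \emph{closed} manifold $2M$, whose holonomies $\hat\rho_n$ converge (the reflection matching up the two copies depends continuously on the peripheral eigendata); since $2M$ is closed, Benoist's theorem~\cite{BenoistCD3} that holonomies of properly convex structures on a closed manifold form a closed subset applies to give a convex structure on $2M$ with holonomy $\hat\rho_\infty$, which one then cuts back along the totally geodesic tori to recover a structure on $M$ with holonomy $\rho_\infty$. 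In other words, the paper \emph{reduces} to the closed case and cites Benoist. You instead attempt a direct geometric limit argument --- Hausdorff convergence of the domains $\Omega_n$, controlling degeneration at the peripheral triangles, etc. --- which amounts to reproving the bordered analogue of Benoist's closedness result from scratch. That is a much harder path, and your sketch does not close the hard steps.

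Concretely: (i) Your normalization step --- ``after conjugating the $\rho_n$ so that a fixed lift of a basepoint develops to a common point $x_0$'' --- is inconsistent with the statement, which asks for closedness in $\Hom(\pi_1 M, \PGL_4\RR)$ with the $\rho_n$ already given and converging; you are not free to conjugate them, and the whole difficulty is to show that the fixed sequence of domains $\Omega_n$ (for the fixed $\rho_n$) does not degenerate. (ii) The phrase ``proximality persists generically for $\rho_\infty(\gamma)$'' is not an argument for the specific limit at hand: you need a positive reason why the attracting eigenvalue gap does not collapse in the limit for \emph{this} $\rho_\infty$, and the diagonalizability hypothesis is imposed only on the peripheral subgroups, not on a non-peripheral loxodromic $\gamma$. (iii) The assertion that ``discreteness and faithfulness of $\rho_\infty$ follow from standard facts about groups preserving properly convex domains'' inverts the logic: discreteness of the limit and properness of the limiting action is precisely the content of Benoist's closedness theorem for closed manifolds, and it is not a routine consequence of merely exhibiting an invariant convex set. (iv) Even granting a properly convex $\Omega_\infty$, the final appeal to the Cooper--Long--Tillmann/Choi holonomy principle is misplaced: that principle concerns small \emph{deformations} of a given structure, and is not a tool for recognizing a Hausdorff limit as a properly convex quotient. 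In short, the route you chose could in principle be developed, but it would require reproducing the hard analytic core of~\cite{BenoistCD3} in the manifold-with-boundary setting, whereas the paper sidesteps all of it by doubling.
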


Indeed, this theorem is not difficult using our techniques. Any convex projective structure on $M$ with totally geodesic boundary yields a convex projective structure on the double $2M$ by Theorem~\ref{thm:convex-gluing}. By a theorem of Benoist~\cite{BenoistCD3}, the space of holonomy representations of convex projective structures on this closed manifold $2M$ is closed in $\Hom(\pi_1 2M, \PGL_4 \RR)$ and any deformation of the original structure on $2M$ may be cut back into pieces with principal totally geodesic boundary.

\subsection{Gluing together covers of reflection orbifolds}

In Section~\ref{reflectionorbs}, we apply the work of Benoist~\cite{BenoistCD4} and Marquis~\cite{Mar} and Theorem~\ref{thm:convex-gluing} to produce many new examples of convex projective structures on non-hyperbolic three-manifolds $N$ which, by contrast to Theorem~\ref{thm:main}, are not doubles. In each of these examples, the pieces $\{M_i\}$ of the JSJ decomposition of $N$ come from covers of a reflection orbifold $\mathcal O$ with a cusp $\partial$ isomorphic to the Euclidean $(3,3,3)$--triangle orbifold. Any convex projective structure on $\mathcal O$ induces a convex projective structure at infinity on $\partial$, the space of which is well-known to be one-dimensional. Considering only projective structures on the pieces $M_i$ which cover a convex projective structure on such an orbifold $\mathcal O$ greatly simplifies the holonomy matching problem of Theorem~\ref{thm:convex-gluing}. This technique allows us, for example, to identify manifolds in the census of tetrahedral cusped hyperbolic manifolds and gluing maps so 
that the resulting
glued up manifold admits properly convex projective structures. See~Section~\ref{reflectionorbs} for precise results.

\subsection*{Acknowledgements}
We are thankful for helpful conversations with Yves Benoist, Suhyoung Choi, Daryl Cooper, Ludovic Marquis, Maria Beatrice Pozzetti and Anna Wienhard. We thank Joan Porti for suggesting the proofs of Theorem~\ref{smoothnessthm} and Lemma~\ref{H^2injects} and for other helpful comments. We also thank Daryl Cooper, Darren Long and Stephan Tillmann for making a preliminary version of their results~\cite{CoLoTi} available to us. This work grew out of discussions at the GEAR network retreats of 2012 and 2014. Additionally, some of this work was done during the spring 2015 semester on Dynamics on Moduli Spaces of Geometric Structures at MSRI and we thank MSRI for support and for the stimulating environment there. Finally, we thank the referees for suggesting some improvements.

S. Ballas was partially supported by the National Science Foundation under the grant DMS 1709097. J. Danciger was partially supported was partially supported by the National Science Foundation grants DMS 1103939 and DMS 1510254 and by an Alfred P. Sloan Foundation fellowship. G.-S. Lee was supported by the DFG research grant ``Higher Teichm\"{u}ller Theory'' and by the European Research Council under ERC-Consolidator Grant 614733. The authors acknowledge support from U.S. National Science Foundation grants DMS 1107452, 1107263, 1107367 \lq\lq RNMS: GEometric structures And Representation varieties\rq\rq\,(the GEAR Network).

%%%%%%%%%%%%%%%%%%%%%%%%%%%%%%%%%%%%%%%%%%%%%%%%%%%%%%%%%%%%%%%

% Background

%%%%%%%%%%%%%%%%%%%%%%%%%%%%%%%%%%%%%%%%%%%%%%%%%%%%%%%%%%%%%%%
\section{Background}

\subsection{Properly convex domains}\label{ssec:propconvex}

The $n$--dimensional real projective space $\RP^n$ is the quotient of $\RR^{n+1}\bs \{0\}$ by the action of $\RR^\times$ by scaling. A point in $\RP^n$ is an equivalence class $[v]$ of vectors $v\in \RR^{n+1}\bs \{0\}$. The projective general linear group $\PGL_{n+1} \RR$, the quotient of $\textrm{GL}_{n+1} \RR$ by its center, acts faithfully on $\RP^n$. 

The image, or projectivization, of a two-dimensional vector subspace under the quotient map is called a \emph{projective line} and the image of an $n$--plane in $\RP^n$ is called a \emph{projective hyperplane}. Each hyperplane is determined by a unique projective class of linear functionals on $\RR^{n+1}$, so the space of hyperplanes identifies with the dual projective space $\RP^{n \ast}$, which is the projective space associated to $(\RR^{n+1})^\ast$. This correspondence is known as \emph{projective duality}. The natural action of $\GL_{n+1} \RR$ on $(\RR^{n+1})^\ast$ descends to a faithful action of $\PGL_{n+1} \RR$ on $\RP^{n\ast}$.

Let $H$ be a projective hyperplane. The complement of $H$ in $\RP^n$ is called an \emph{affine patch} and is denoted $\AF_H$. Up to change of coordinates by a projective transformation, \ie an element of $\PGL_{n+1} \RR$, any affine patch $\AF_H$ may be identified with the standard affine patch:
$$\{ [x_1, \ldots, x_n, 1] \in \RP^n \mid (x_1,\ldots,x_n)\in \RR^n\}.$$

A subset $\Omega$ of $\RP^n$ is called \emph{convex} if it is contained in some affine patch (\ie is disjoint from a projective hyperplane) and its intersection with every projective line is connected. If in addition its closure $\overline{\Omega}$ is contained in an affine patch then $\Omega$ is \emph{properly convex}. Equivalently, $\Omega$ is properly convex if $\Omega$ does not contain any complete affine line. The boundary $\partial \Omega := \overline \Omega \setminus \mathrm{Int}(\Omega)$ is said to be \emph{strictly convex} at $p \in \partial \Omega$ if $p$ is not contained in the interior of any affine line segment in $\partial \Omega$. If $\Omega$ is properly convex and $\partial \Omega$ is strictly convex at every point $p\in \partial \Omega$, then we say that $\Omega$ is \emph{strictly convex}.

Every open properly convex domain $\Omega$ gives rise to a dual domain
$$\Omega^\ast=\{[\phi] \in \RP^{n \ast}  \mid \phi(v) \neq 0\ \forall\ [v] \in \overline{\Omega}  \}.$$
It is easily verified that $\Omega^\ast$ is also open, non-empty, properly convex and that $(\Omega^\ast)^\ast = \Omega$. For each $p=[v]\in \partial \Omega$ there is a (possibly non-unique) $q=[\phi]\in \partial \Omega^\ast$ such that $\phi(v)=0$. The projective hyperplane dual to $q$ is called a \emph{supporting hyperplane} at $p$. A point $p\in \partial \Omega$ has a unique supporting hyperplane if and only if $p$ is a $C^1$ point of the boundary.

Let $\SS^n$ denote the $n$--sphere, realized as the quotient of $\RR^{n+1}\bs \{0\}$ by positive scaling, and let $\pi \co \SS^n\to \RP^n$ denote the 2-to-1 covering map. The automorphisms of $\SS^n$ are given by the linear transformations $\SL^{\pm}_{n+1} \RR$ with determinant $\pm 1$.  Let $[T]\in \PGL_{n+1} \RR$ be an equivalence class of linear transformations. By scaling we may arrange that $T\in \SL^{\pm}_{n+1} \RR$. Additionally, since $T\in \SL^{\pm}_{n+1} \RR$ if and only if $-T\in \SL^{\pm}_{n+1} \RR$, there is a 2-to-1 covering $\SL^{\pm}_{n+1} \RR \to \PGL_{n+1} \RR$, which by abuse we also call $\pi$, given by $\pi(T)= [T]$.  If $\Omega$ is a properly convex domain then we let $\SL^{\pm}(\Omega)$ and $\PGL(\Omega)$ denote the subgroups of
$\SL^{\pm}_{n+1} \RR$ and $\PGL_{n+1} \RR$ preserving $\pi^{-1}(\Omega)$ and $\Omega$, respectively. Then $\pi$ restricts to a 2-to-1 covering homomorphism from $\SL^{\pm}(\Omega)$ onto $\PGL(\Omega)$.
When $\Omega$ is properly convex, a homomorphic section of $\pi$ is constructed as follows. Since $\Omega\subset \RP^n$ is properly convex, the preimage of $\Omega$ under $\pi$ will consist of two connected components. An element $T \in \SL^{\pm}(\Omega)$ either preserves both of these components individually or it interchanges them. Furthermore, $T$ preserves both components if and only if $-T$ interchanges them. The desired section of $\pi$ is defined by mapping $[T]\in \PGL(\Omega)$ to the unique lift of $[T]$ to $\SL^{\pm}(\Omega)$ that preserves both components of $\pi^{-1}(\Omega)$.  Using this section we are able to identify $\PGL(\Omega)$ with a subgroup of $\SL^{\pm}(\Omega)$. We may therefore regard elements of $\PGL(\Omega)$ as linear transformations when convenient.

\subsection{Projective structures on manifolds}
Let $M$ be an  $n$--manifold.  A~\emph{projective atlas} on $M$ is a collection of charts, $\phi_\alpha \co U_\alpha\to \RP^n$, that cover $M$ with the property that if $U_\alpha$ and $U_\beta$ are charts with non-empty intersection then $\phi_\alpha \circ \phi_\beta^{-1}$ is locally the restriction of an element of $\PGL_{n+1} \RR$. Every projective atlas determines a unique maximal projective atlas and we call a maximal projective atlas on $M$ a \emph{projective structure} on $M$. In other words, a projective structure on $M$ is a $(G,X)$ structure on $M$ (see Ratcliffe~\cite{Rat} for an introduction to $(G,X)$ structures) where $(G,X)$ is real projective geometry: $G = \PGL_{n+1} \RR$ and $X = \RP^n$. A manifold equipped with a projective structure is called a \emph{projective manifold}. Note that a projective manifold is also a smooth manifold.

If $M$ and $M'$ are projective manifolds of the same dimension, then a continuous map $f \co M\to M'$ is \emph{projective} if for each pair of charts $\phi \co U\to \RP^n$ of $M$ and $\psi \co V\to \RP^n$ of $M'$ such that $U\cap f^{-1}(V)\neq \emptyset$ the map $\psi\circ f\circ\phi^{-1} \co \phi(U\cap f^{-1}(V))\to \psi(f(U)\cap V)$ agrees with an element of $\PGL_{n+1} \RR$ on each connected component. Such a map is necessarily smooth. If in addition $f$ is a diffeomorphism we say that $f$ is a \emph{projective equivalence}.

After fixing a universal covering $\widetilde M \to M$ the local data of a projective structure may be replaced, via analytic continuation of the charts, by global data $(dev,\rho)$, where $dev \co \widetilde M\to \RP^n$ is a local diffeomorphism called a \emph{developing map} which is equivariant with respect to a representation $\rho \co \fund{M}\to \PGL_{n+1} \RR$ called the \emph{holonomy representation} in the sense that
$$dev(\gamma x)=\rho(\gamma)dev(x),$$
for all $\gamma\in \fund{M}$ and all $x\in \widetilde M$.  Any two developing maps $dev$ and $dev'$ for the same structure satisfy that $dev'=g\circ dev$ for some $g\in \PGL_{n+1} \RR$; the respective holonomy representations are related by conjugation: $\rho'=g\rho g^{-1}$.
More generally, two projective structures on $M$ are considered equivalent if developing maps $dev, dev'$ for the respective structures are related by the equation $dev' = g \circ dev \circ \widetilde \varphi$ where $\widetilde \varphi$ is the lift to $\widetilde M$ of a diffeomorphism $\varphi$ of $M$ that is isotopic to the identity.

Suppose that we are given a projective structure on $M$ with development pair $(dev,\rho)$. If $dev$ is a diffeomorphism onto a convex (resp. properly convex  or strictly convex) domain  $\Omega$ of $\RP^n$ then we say that the projective structure is \emph{convex} (resp. \emph{properly convex}  or \emph{strictly convex}). In this case the holonomy representation $\rho$ is faithful with image a discrete subgroup of $\PGL(\Omega)$. Here are some useful equivalent characterizations of convexity.
\begin{theorem}[{Goldman~\cite[Proposition 3.1]{Go}}]\label{convexitycriteria}
 Let $M$ be a projective $n$--manifold. Then the following are equivalent:
 \begin{enumerate}
  \item $M$ is convex.
  \item Every path in $M$ is homotopic (rel endpoints) to a \emph{unique} geodesic segment, \ie a segment that develops into a projective line.
  \item $M$ is projectively equivalent to the natural projective structure on $\Gamma \backslash \Omega$ where $\Omega = dev(\widetilde{M}) \subset \RP^n$ is convex and $\Gamma = \rho(\fund{M}) \subset \PGL(\Omega)$ is a discrete group acting properly discontinuously and freely on $\Omega$.
 \end{enumerate}

\end{theorem}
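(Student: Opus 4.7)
The plan is to prove the three equivalences by the cycle $(3) \Rightarrow (1) \Rightarrow (2) \Rightarrow (3)$, with the first two implications essentially routine and the substantive work concentrated in $(2) \Rightarrow (3)$. For $(3) \Rightarrow (1)$: if $M$ is projectively equivalent to $\Gamma \backslash \Omega$ with $\Omega \subset \RP^n$ convex, one may choose a developing pair $(dev, \rho)$ for $M$ by identifying $\widetilde M$ with $\Omega$ and taking $dev$ to be the inclusion; this realizes $dev$ as a diffeomorphism onto a convex domain, which is the content of (1).

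For $(1) \Rightarrow (2)$, given a path $\alpha$ in $M$ from $p$ to $q$, first lift it to $\widetilde \alpha$ in $\widetilde M$ from $\widetilde p$ to $\widetilde q$. Convexity of $\Omega = dev(\widetilde M)$ provides a unique projective line segment $\ell$ from $dev(\widetilde p)$ to $dev(\widetilde q)$ contained in $\Omega$, and $\widetilde \beta := dev^{-1}(\ell)$ is a geodesic segment from $\widetilde p$ to $\widetilde q$. A straight-line homotopy inside any affine chart containing $\overline\Omega$ carries $dev(\widetilde \alpha)$ to $\ell$; pulling back by $dev^{-1}$ and projecting yields a homotopy rel endpoints from $\alpha$ to the projection $\beta$ of $\widetilde \beta$. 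Uniqueness is immediate: any two geodesic segments from $\widetilde p$ to $\widetilde q$ develop to projective segments between the same pair of points of $\Omega$, which must coincide, and injectivity of $dev$ forces equality in $\widetilde M$.

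The main work lies in $(2) \Rightarrow (3)$, where one must show that $dev \colon \widetilde M \to \RP^n$ is an injective local diffeomorphism onto a convex open set $\Omega$ on which $\rho(\pi_1 M)$ acts freely and properly discontinuously. For injectivity, suppose $dev(\widetilde p) = dev(\widetilde q)$. Pick a path $\widetilde \alpha$ in $\widetilde M$ connecting them and project to a path $\alpha$ in $M$; invoke (2) to obtain a unique geodesic representative $\beta$, then lift to a geodesic $\widetilde \beta$ from $\widetilde p$ to $\widetilde q$ whose development starts and ends at the same point of $\RP^n$. A local analysis using that $dev$ is a local diffeomorphism together with the uniqueness clause in (2) forces $\widetilde \beta$ to be constant, hence $\widetilde p = \widetilde q$. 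Convexity of $\Omega := dev(\widetilde M)$ then follows readily: given $x, y \in \Omega$, let $\widetilde p, \widetilde q \in \widetilde M$ be their unique preimages, join them by an arbitrary path, apply (2) to obtain a geodesic between them, and develop to produce a projective line segment from $x$ to $y$ contained in $\Omega$. Finally, the $\pi_1 M$-action on $\widetilde M$ is transported by the now-$\pi_1 M$-equivariant diffeomorphism $dev$ to the $\rho(\pi_1 M)$-action on $\Omega$, inheriting freeness and proper discontinuity from the deck action.

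The hard part will be the injectivity step in $(2) \Rightarrow (3)$: the uniqueness of geodesic homotopy representatives is a global statement, and ruling out hypothetical non-constant geodesic loops in $\widetilde M$ whose developments wrap around projective lines in $\RP^n$ requires a careful local comparison, using the inverse of $dev$ on small balls to produce an alternative geodesic representative that would contradict the uniqueness in (2).
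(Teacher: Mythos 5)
The paper does not supply its own proof of this theorem; it cites it directly from Goldman~\cite[Proposition 3.1]{Go}. So there is no paper-internal argument to compare against. Evaluating your attempt on its own terms, the gap you already sense is real and is not merely a matter of ``careful local comparison.'' In the injectivity step of $(2)\Rightarrow(3)$ you assume $dev(\widetilde p)=dev(\widetilde q)$, produce (via the simple connectivity of $\widetilde M$ and hypothesis (2)) the unique geodesic $\widetilde\beta$ from $\widetilde p$ to $\widetilde q$, and assert that a local analysis forces $\widetilde\beta$ to be constant. But locally nothing is wrong: $dev\circ\widetilde\beta$ is a perfectly good immersed arc in a projective line (a circle in $\RP^n$), and the only way to derive a contradiction from $dev(\widetilde p)=dev(\widetilde q)$ with $\widetilde p\ne\widetilde q$ is to confront the possibility that $dev\circ\widetilde\beta$ wraps around the whole circle. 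That is a global phenomenon; restricting $dev^{-1}$ to small balls cannot see it, and by itself the uniqueness clause does not manufacture a second geodesic representative in the same homotopy class of $\beta$ (in $\widetilde M$ there is only one homotopy class between two points, and $\widetilde\beta$ \emph{is} that unique representative). Either one invokes the convention, implicit in Goldman, that a geodesic segment develops \emph{injectively} onto a proper arc contained in an affine chart --- in which case injectivity of $dev$ is immediate with no local analysis at all --- or one needs a genuinely different global argument (an open-and-closed ``visibility'' argument of the type used by Koszul and by Goldman, which is the substance of the cited proof). Your sketch contains neither.

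There is a second, related gap in the convexity step of $(2)\Rightarrow(3)$. You establish that between any two points of $\Omega=dev(\widetilde M)$ there is a projective segment contained in $\Omega$, but the paper's definition of convexity requires, in addition, that $\Omega$ be contained in an affine patch (equivalently, disjoint from some projective hyperplane). Your argument does not address this; $\RP^n$ itself satisfies ``any two points are joined by a segment in the set'' but is not convex. Again this is caught automatically if geodesic segments are required to develop injectively into an affine chart, but you never impose that, and so the convexity conclusion is not actually reached. The directions $(3)\Rightarrow(1)$ and $(1)\Rightarrow(2)$ are essentially fine as you wrote them.
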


\begin{remark}
 If $M=\Gamma\backslash \Omega$ is properly convex then $M$ can be equipped with the Hilbert metric (see de la Harpe~\cite{dlH}). Segments of projective lines, also known as projective geodesics, are always geodesics in the Hilbert metric. If $\Omega$ is strictly convex, then Hilbert geodesics are always projective geodesics.
\end{remark}

Via the Klein model, the hyperbolic $n$--space $\HH^n$ is realized as a ball in $\RP^n$ and its group of isometries is realized as $\PO(n,1)\subset \PGL_{n+1} \RR$ (see Ratcliffe~\cite{Rat} for details). Thus a hyperbolic structure on a manifold is a projective structure and a \emph{complete} hyperbolic structure is a strictly convex projective structure. Incomplete hyperbolic structures could be convex, \eg the interior of the convex core of a convex cocompact hyperbolic structure, or could fail to be convex. For example, given a cusped hyperbolic manifold $M$, Thurston's generalized Dehn surgery space~\cite{Thu} consists of one complete structure, which is strictly convex, and many incomplete structures, none of which are convex. Some of those incomplete structures can be completed to give hyperbolic structures on Dehn fillings of $M$, but even those incomplete structures are not convex: the developing map is an infinite sheeted covering onto the complement of a discrete countable collection of lines.

\subsection{Deformation theory and projective manifolds with generalized cusps} \label{subsec:gen-cusps}

There is a natural correspondence between $(G,X)$ structures and their holonomy representations often called the \emph{holonomy principle} or \emph{Ehresmann--Thurston principle}. The correspondence, originally discovered by Ehresmann~\cite{Ehresmann} and popularized by Thurston~\cite{Thu}, is a crucial tool in the study of deformation spaces of geometric structures.
We describe this holonomy principle in the projective setting as well as some relevant generalizations in the setting of convex projective structures. For this discussion we denote $G = \PGL_{n+1} \RR$.
Let $M$ be the interior of a compact $n$--manifold possibly with or without boundary, let $\Gamma = \fund{M}$, and let $\mathfrak D(M)$ be the space of equivalence classes of marked projective structures on $M$. Let $\Hom(\Gamma, G)$ denote the space of representations of $\Gamma$ into $G$ and let $\chi(\Gamma, G)$ denote its quotient by the $G$ action by conjugation. Let $hol \co \mathfrak D(M)\to \chi(\Gamma, G)$ denote the map that associates to each equivalence class of projective structure the conjugacy class of its holonomy representation. Each space is equipped with a natural topology; we refer the reader to Goldman~\cite{GoII} for details. When $M$ is closed, the holonomy principle states simply that the map $hol \co \mathfrak D(M) \to \chi(\Gamma, G)$ is a local homeomorphism; in other words, \emph{the small deformations of a projective structure are, up to equivalence, parameterized by small deformations of the conjugacy class of its holonomy representation}. In the case that $\partial M$ 
is non-empty, the principle holds as stated only
once the definition of equivalence of projective structures is relaxed so that two projective structures are considered equivalent if there is a diffeomorphism of $M$ which is projective, with respect to the one structure in the domain and the other in the target, away from some collar neighborhood of $\partial M$. In general, the holonomy principle does not guarantee any control of the geometry at the boundary; this is an important issue in many studies of deformations of geometric structures (\eg in the context of cone-manifold structures, see Hodgson--Kerckhoff~\cite{HoKe} or Danciger~\cite{Dan}), including this one.
Proofs of the holonomy principle are found in Canary--Epstein--Green~\cite{CaEpGr} or Goldman~\cite{GoII}.

In the context of this paper, we need a more powerful holonomy principle that allows for control of more refined geometric properties, specifically that of convexity. Koszul~\cite{Ko} proved that when $M$ is closed, proper convexity is an open condition: a small deformation of the holonomy representation of a properly convex projective structure is the holonomy representation of a nearby properly convex projective structure. However, when $\partial M \neq \emptyset$, the same statement fails. A simple example, that of incomplete hyperbolic structures nearby a complete hyperbolic structure, was already given in the previous subsection. We now describe recent work of Cooper--Long--Tillman~\cite{CoLoTi} which, given a properly convex projective structure satisfying certain assumptions, determines which deformations of the holonomy
representation are the holonomy representation of a nearby properly convex projective structure. Further, the result allows for some control over the geometry at the boundary. We note that related results of Choi~\cite{ChoiI,Choi2} on projective structures with radial ends also imply the holonomy principle needed in our context.

 A \emph{generalized cusp} is a properly convex manifold $B$ (with boundary) such that $B\cong \partial B\times [0,\infty)$, $\partial B$ is compact and strictly convex (\ie locally the graph of a strictly convex function), and $\fund{B}$ is virtually nilpotent.
 The manifold $\partial B$ is called the \emph{cusp cross section}. We now discuss some motivating examples for this definition. The first is an end of a finite volume hyperbolic manifold. In this case the strictly convex boundary $\partial B$ of the generalized cusp is the quotient of a horosphere by a virtually abelian group. The second example, which is more relevant for our purposes, is a regular neighborhood of a principal totally geodesic torus boundary component.    The following holonomy principle follows immediately from \cite[Theorem 0.1]{CoLoTi}.

\begin{theorem}\label{thm:koszul}
Let $M=M^c\cup\mathcal{V}$ be a connected $n$--manifold such that $M^c$ is compact and connected, $\mathcal{V}\cong \partial \mathcal{V}\times [0,\infty)$, and $\partial M^c=\partial \mathcal{V}$. Suppose that $V_1,\ldots, V_k$ are the components of $\mathcal{V}$ and that $\rho$ is the holonomy of a properly convex projective structure on $M$ in which each $V_i$ is a generalized cusp. If $\rho'$ is sufficiently close to $\rho$ in $\Hom(\pi_1 M, G)$ and for each $i$, $\rho'\vert_{\fund{V_i}}$ is the holonomy of a generalized cusp structure on $V_i$, then $\rho'$ is the holonomy of a properly convex structure on $M$ in which each $V_i$ is a generalized cusp.
\end{theorem}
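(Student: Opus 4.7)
The plan is to derive Theorem~\ref{thm:koszul} as a direct consequence of Cooper--Long--Tillmann's Theorem~0.1 in~\cite{CoLoTi}. First, the Ehresmann--Thurston holonomy principle (see Canary--Epstein--Green~\cite{CaEpGr} or Goldman~\cite{GoII}) produces a projective structure on $M$ realizing the nearby representation $\rho'$ as its holonomy, and which agrees with the original structure outside an arbitrarily small collar of $\partial M^c$. The hypothesis that $\rho'|_{\pi_1 V_i}$ is the holonomy of a generalized cusp structure on $V_i$ then allows one to replace the structure on a neighborhood of each end by such a cusp structure, producing a projective (not yet known to be convex) structure on $M$ with holonomy $\rho'$ in which each $V_i$ is a generalized cusp.

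The substantive content is then furnished by~\cite[Theorem~0.1]{CoLoTi}: given a properly convex projective structure on $M$ whose ends are generalized cusps, and a nearby representation $\rho'$ whose restriction to each peripheral subgroup is itself a generalized cusp holonomy, their result guarantees that $\rho'$ is the holonomy of a properly convex projective structure on $M$ with generalized cusp ends. The main obstacle, namely that Koszul-type openness of proper convexity fails in the presence of ends without further control of the end geometry, is handled entirely in~\cite{CoLoTi}; the task here reduces to matching their hypotheses with ours, which is immediate from the construction above. An alternative route, avoiding~\cite{CoLoTi}, would be to invoke the parallel results of Choi~\cite{ChoiI,Choi2} on projective structures with radial ends, after verifying that generalized cusps in our sense give rise to radial ends in his.
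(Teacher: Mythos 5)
Your proposal takes the same route as the paper: the paper offers no proof beyond the single remark that the theorem ``follows immediately from \cite[Theorem 0.1]{CoLoTi}'', and also points to Choi~\cite{ChoiI,Choi2} as an alternative, exactly as you do. Your preliminary steps (invoking the Ehresmann--Thurston principle and then swapping in cusp structures at the ends) add some scaffolding that the paper elides, but the substantive ingredient and the citation structure are identical.
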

We will use this theorem in Section \ref{boundarygeom} to prove Theorem \ref{thm:deform}.

%%%%%%%%%%%%%%%%%%%%%%%%%%%%%%%%%%%%%%%%%%%%%%%%%%%%%%%%%%%%%%%

% Smoothness Section

%%%%%%%%%%%%%%%%%%%%%%%%%%%%%%%%%%%%%%%%%%%%%%%%%%%%%%%%%%%%%%%
\section{Smoothness}
\label{sec:smoothness}

We begin by setting some notation that will be used for the remainder of the article. Henceforth, we let $M$ denote an orientable finite volume non-compact hyperbolic three-manifold with $k$ torus cusps, which we denote by $\partial_i$ for $1\leq i\leq k$. Let $\Gamma = \pi_1 M$ denote the fundamental group of $M$, and for $1\leq i\leq k$ let $\Delta_i$ denote a representative of the conjugacy class of peripheral subgroups of the $i^{th}$ cusp. Each $\Delta_i$ is isomorphic to $\ZZ\times \ZZ$. We denote the groups $\PGL_4\RR$, $\SL^\pm_4\RR$ and $\SL_4\RR$ by $G$, $\tilde G$ and $\tilde G_0$, respectively. All three of these Lie groups are locally isomorphic and thus have the same Lie algebra which we will denote by $\mathfrak{g}$.

Let $\rhyp$ be a representative of the unique conjugacy class of discrete faithful representations of $\Gamma$ into the isometry group of hyperbolic three-space. Via the Klein model, the isometry group of hyperbolic three-space is realized as the subgroup $\PO(3,1)$ of $G$ that preserves the standard round ball $\HH^3$ in $\RP^3$. Hence we may regard $\rhyp$ as a point in the representation space $\Rep(\Gamma, G)$. We also, by abuse, regard $\rhyp$ as a point in the quotient $\Char(\Gamma, G) = \Rep(\Gamma, G)/G$ of the representation space by the action of $G$ by conjugation. The sense in which the quotient is taken may be left ambiguous; if one desires $\Char(\Gamma, G)$ to have the structure of an algebraic
variety, then the Mumford geometric invariant theory quotient is needed rather than the naive topological quotient. It is a standard fact that the two quotients agree (as topological spaces) locally near any irreducible representation such as $\rhyp$.

We will also need to study the representation space $\Rep(\Delta_i, G)$ of a peripheral subgroup $\Delta_i$, which has no irreducible representations. In this case the quotient by conjugation is not well-behaved. In the main sections of the paper, we will avoid such issues by working exclusively in
$\Rep$ rather than in the quotient~$\Char$.

This section is dedicated to some basic results about the local structure of $\Rep(\Gamma, G)$ and $\Char(\Gamma, G)$ near $\rhyp$. These results are straightforward and many of them seem to be well-known to experts although we are not aware of their existence anywhere in the current literature. Some analogous results from the context of deformations in $\SL(2,\CC)$ and $\SL(n, \CC)$ can be found in Boden--Friedl~\cite{BodenFriedl}, Heusener--Medjerab~\cite{HeusenerMed}, and Heusener--Porti--Su\'arez Peir\'o~\cite{HeusenerPortiSuarez}. The main result of this section shows that if a certain cohomological condition is satisfied (Definition \ref{def:infrig}), then $\rhyp$ is a smooth point of $\Rep(\Gamma, G)$ and $\Char(\Gamma, G)$. Since we are only interested in local behavior, it is equivalent and will be marginally less cumbersome to work in the representation space $\Rep(\Gamma, \tilde G)$ for the matrix group $ \tilde G$. Note that, since $\rhyp$ is the holonomy representation of a convex projective structure, it admits a unique lift, denoted by abuse again by $\rhyp$, to $\tilde G$ and so does every nearby representation (see Section \ref{ssec:propconvex}).

\subsection{Infinitesimal deformations and cohomology}

Let $\rho_t \co \Gamma\to \tilde G$ be a smooth path of representations into the matrix group $\tilde G $. Near $t= 0$, we may express the path $\rho_t$ as:
$$\rho_t(\gamma)=\exp( t u(\gamma)+O(t^2))\rho(\gamma),$$
where $u \co \Gamma\to \g$, defined by $\gamma\mapsto \left.\frac{d}{dt}\right|_{t=0}\rho_t(\gamma)\rho(\gamma)^{-1}$, is a group cocycle with coefficients in $\g$ twisted by the adjoint action of $\rho = \rho_0$. We denote the set of such cocycles, defined by the condition $u(\gamma_1\gamma_2) = u(\gamma_1) + \Ad_{\rho(\gamma_1)} u(\gamma_2)$, by $Z^1_{\rho}(\Gamma,\g)$ and refer to its elements as \emph{infinitesimal deformations} of $\rho$; it is the Zariski tangent space of $\Rep(\Gamma, \tilde G)$ at $\rho$. The representation $\rho$ is a smooth point of $\Rep(\Gamma, \tilde G)$ if and only if each infinitesimal deformation $u$ is \emph{integrable},
\ie $u$ is tangent to some path $\rho_t$ as above.
The space of coboundaries, denoted by $B^1_{\rho}(\Gamma, \g)$, is the subspace of those cocycles $b$ satisfying the \emph{infinitesimal conjugacy} condition: that there exists $v \in \g$ such that $b(\gamma) = v - \Ad_{\rho (\gamma)} v$. Each such coboundary $b$ is tangent to the conjugation path $\rho_t=c_t\rho c_t^{-1}$ at $t = 0$, where $c_t  = \exp(t v)$. The first cohomology group $H^1_{\rho}(\Gamma,\g)=Z^1_{\rho}(\Gamma,\g)/B^1_{\rho}(\Gamma,\g)$ with coefficients in $\g$ twisted by the adjoint action of $\rho$ describes the infinitesimal deformations of $\rho$ up to infinitesimal conjugacy. In the case that $\rho$ determines a smooth point of $\chi(\Gamma, \tilde G)$, this cohomology group describes its Zariski tangent space.

The higher cohomology groups, with twisted coefficients in $\g$, will not be of use to us except in the following subsection. Given an infinitesimal deformation $u$, there is an infinite sequence of obstructions to the integrability of $u$, each of which is an element of the second cohomology group $H^2_{\rho}(\Gamma, \g)$.

It will be important to understand the relationship between the deformations of the representations of $\Gamma$ with the deformations of representations of the peripheral subgroups~$\Delta_1, \ldots, \Delta_k$. The \emph{restriction map} $$\res \co \Rep(\Gamma, \tilde G) \to \Rep(\Delta_1,\tilde G)\times\ldots \times \Rep(\Delta_k,\tilde G)$$ is the product $\res = \res_1 \times \cdots \times \res_k$, 
where $\res_i \co \Rep(\Gamma, \tilde G) \to \Rep(\Delta_i, \tilde G)$ denotes the restriction map induced by the inclusion $\Delta_i \hookrightarrow \Gamma$ of the $i^{th}$ peripheral subgroup into $\Gamma$.
Each such map $\res_i$ defines a restriction map on group cohomology, $(\res_i)_* \co H^1_{\rho}(\Gamma,\g) \to H^1_{\res_i \rho}(\Delta_i,\g)$ and it is convenient to synthesize these into one linear map
 $$\res_* \co H^1_\rho(\Gamma, \g) \to \bigoplus_{i=1}^k H^1_{\res_i \rho}(\Delta_i, \g)$$ defined by 
$$
\res_* = (\res_1)_* \oplus \cdots \oplus (\res_k)_*.
$$
When clear, we will abuse notation using $\res$ and $\res_i$ to denote both the restriction map on representations and on cohomology. We will also conserve space using $H^*_{\rho}(\Delta_i, \g)$ to mean $H^*_{\res_i \rho}(\Delta_i, \g)$.

We note that, since $M$ is aspherical, the group cohomology groups $H^*_\rho(\Gamma, \g)$ coincide with the de Rham cohomology groups $H^*_\rho(M, \g)$ with coefficients in the flat $\g$-bundle over $M$ associated to $\rho$. Similarly, there is a natural identification between $H^*_\rho(\Delta_i, \g)$ and $H^*_\rho(\partial_i, \g)$ for each $1 \leq i \leq k$ and between $\bigoplus_{i=1}^kH^*_\rho(\Delta_i, \g)$ and $H^*_\rho(\partial M, \g)$. Although it will be more convenient for us to work with group cohomology, this identification makes available the tools commonly used in the 
study of cohomology of manifolds, \eg the long exact sequence of a relative pair and Poincar\'e duality (see \eg Heusener--Porti~\cite{HePo} for details).

\subsection{Infinitesimal rigidity implies smoothness}
 The following property, introduced and studied by Huesener--Porti~\cite{HePo}, is the critical assumption in Theorems~\ref{thm:main} and~\ref{thm:deform}.

\begin{definition}\label{def:infrig}
Let $\rhyp \co \Gamma \to \SO(3,1) \subset \tilde G$ denote the holonomy representation of the complete finite volume hyperbolic structure on $M$.
Then $M$ is called \emph{infinitesimally projectively rigid rel $\partial M$} if the restriction map $\res \co H^1_{\rhyp}(\Gamma, \g) \to \bigoplus_{i=1}^kH^1_{\rhyp}(\Delta_i, \g)$ is an injection.
\end{definition}
The main theorem of this section is:

\begin{theorem}\label{smoothnessthm}
Let $M$ be an orientable complete finite volume hyperbolic manifold with fundamental group $\Gamma$, and let $\rhyp \co \Gamma \to \SO(3,1) \subset \tilde G$ be the holonomy representation of the complete hyperbolic structure. If $M$ is infinitesimally projectively rigid rel $\partial M$, then $\rhyp$ is a smooth point of $\Rep(\Gamma, \tilde G)$ and its conjugacy class is a smooth point of $\Char(\Gamma, \tilde G)$.
 \end{theorem}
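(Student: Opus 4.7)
The plan is to apply obstruction theory, converting the smoothness of $\rhyp$ in $\Rep(\Gamma, \tilde G)$ into the vanishing of a sequence of obstruction classes in $H^2_{\rhyp}(\Gamma, \g)$, and then to use Poincaré--Lefschetz duality to convert the $H^1$ rigidity hypothesis into injectivity of the restriction map on $H^2$, reducing the problem to boundary cusps where it is elementary.

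\emph{Step 1 (Duality on cohomology).} First I would establish what I expect is the content of Lemma~\ref{H^2injects}: under infinitesimal projective rigidity rel $\partial M$, the restriction map
$$\res_* : H^2_{\rhyp}(\Gamma, \g) \to \bigoplus_{i=1}^k H^2_{\rhyp}(\Delta_i, \g)$$
is also injective. Using asphericity of $M$ to identify group and manifold cohomology, and the $\Ad$-invariant Killing form on $\g = \ssl_4$ to self-dualize the flat $\g$-bundle, Poincaré--Lefschetz duality yields $H^k(M; \g) \cong H^{3-k}(M, \partial M; \g)^*$. Under this duality, $\res_*$ in degree $2$ is identified with (the transpose of) the connecting map $\delta : H^0(\partial M; \g) \to H^1(M, \partial M; \g)$ from the long exact sequence of the pair. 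Chasing the LES, $\delta$ is surjective iff the map $H^1(M, \partial M; \g) \to H^1(M; \g)$ has trivial image, iff $\res_*$ is injective on $H^1$ --- which is precisely the hypothesis.

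\emph{Step 2 (Reduction to cusps).} By a standard fact, $\rhyp$ is a smooth point of $\Rep(\Gamma, \tilde G)$ iff every cocycle $u \in Z^1_{\rhyp}(\Gamma, \g)$ integrates to a real-analytic path of representations, and the successive obstructions are classes $o_n(u) \in H^2_{\rhyp}(\Gamma, \g)$. These obstructions are functorial under restriction: $\res_* o_n(u) = o_n(\res u)$. By Step~1, to show $o_n(u) = 0$ for all $n$, it suffices to show $o_n(\res u) = 0$ on each cusp group $\Delta_i$, \ie that $\res u$ is integrable in $\Rep(\Delta_i, \tilde G)$.

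\emph{Step 3 (Smoothness at the cusps and at $[\rhyp]$).} For each $i$, $\Delta_i \cong \ZZ^2$ and $\rhyp(\Delta_i)$ generates the connected two-dimensional unipotent subgroup of $\SO(3,1)$ fixing a point on $\partial \HH^3$. Smoothness of $\Rep(\Delta_i, \tilde G)$ at $\rhyp|_{\Delta_i}$ then follows by exhibiting a local parametrization of commuting pairs near $(\rhyp(\alpha), \rhyp(\beta))$ whose tangent directions span the Zariski tangent space $Z^1_{\rhyp}(\Delta_i, \g)$; the explicit slice of diagonalizable deformations of Lemma~\ref{lem:slice}, together with $\tilde G$-conjugation directions, provides such a parametrization. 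Consequently every cocycle $\res u$ integrates on each cusp, completing the proof that $\rhyp$ is smooth in $\Rep(\Gamma, \tilde G)$. Finally, smoothness of $[\rhyp]$ in $\Char(\Gamma, \tilde G)$ follows from irreducibility: Zariski density of $\rhyp(\Gamma)$ in $\SO(3,1)$ combined with irreducibility of the defining representation $\SO(3,1) \subset \SL(\RR^4)$ gives $H^0_{\rhyp}(\Gamma, \g) = \g^{\SO(3,1)} = 0$ (via the symmetric decomposition $\ssl_4 = \so(3,1) \oplus \mathfrak{p}$ into nontrivial irreducible summands), so the conjugation orbit through $\rhyp$ is an embedded submanifold of maximal dimension and smoothness descends to the quotient.

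The principal technical point is the duality diagram chase of Step~1: one must carefully combine Poincaré--Lefschetz duality for the pair $(M, \partial M)$ with the self-duality of $\g$ under the Killing form, so that the rigidity condition on $H^1$ propagates correctly to an injectivity on $H^2$. Given this, Step~2 is automatic, and Step~3 requires only careful bookkeeping for the cusp representation varieties (made easier by the explicit construction in Lemma~\ref{lem:slice}).
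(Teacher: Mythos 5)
Your overall strategy---transfer integrability obstructions from $\Gamma$ to the peripheral subgroups via an $H^2$ injectivity lemma, then show each $\Rep(\Delta_i,\tilde G)$ is smooth at $\rhyp|_{\Delta_i}$---is exactly the paper's. Step~1 is correct and in fact cleaner than the paper's version of Lemma~\ref{H^2injects}: you invoke the Poincar\'e--Lefschetz adjointness between $\res_*$ in degree $2$ and the connecting map $\delta\colon H^0(\partial M;\g)\to H^1(M,\partial M;\g)$, then chase the long exact sequence, whereas the paper does a dimension count via ``half lives, half dies.'' Both prove the same statement; your chain of equivalences is more transparent. Step~2, the functoriality of the obstruction classes under restriction, is identical to the paper's reduction.

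Step~3, however, has a genuine gap. You claim that the tangent directions of the explicit slice $\slice$ from Lemma~\ref{lem:slice} together with the $\tilde G$-conjugation directions span the Zariski tangent space $Z^1_{\rhyp}(\Delta_i,\g)$. That cannot be right by a dimension count: the slice $\Phi$ is a $4$-dimensional partial slice transverse to conjugation, and the coboundary space $B^1_{\rhyp}(\Delta_i,\g)$ has dimension $\dim\g - \dim H^0 = 15-3 = 12$, so the span of slice tangents plus conjugation tangents is at most $4 + 12 = 16$. But $\dim Z^1_{\rhyp}(\Delta_i,\g) = \dim H^1 + \dim B^1 = 6 + 12 = 18$. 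Indeed, $V_{\slice}$ is a $4$-dimensional subspace of the $6$-dimensional $H^1_{\rhyp}(\Delta_i,\g)$ (the paper uses this fact crucially in Lemma~\ref{lem:transverse}, where $V_S$ is shown to be a \emph{complement} of the $2k$-dimensional $\res_*(H^1_{\rhyp}(\Gamma,\so(3,1)))$), so you are missing $2$ directions. The paper's Lemma~\ref{Z^2smoothness} instead computes $\dim Z^1_{\rhyp}(\Delta_i,\g) = 18$ directly and invokes Richardson's theorem that $\Rep(\Delta_i,\SL_4\CC)$ is an irreducible $18$-dimensional variety, so equality of the tangent space dimension and variety dimension yields smoothness; you would need either to adopt that argument or to produce an explicit $2$-parameter family of $\PSL_2\CC$ (Dehn-surgery-type) deformations transverse to everything already in hand. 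A smaller point: for the smoothness of $\Char(\Gamma,\tilde G)$ at $[\rhyp]$, the vanishing of $H^0$ gives only that the orbit is full-dimensional; you also need closedness of nearby orbits (guaranteed here because $\rhyp$ and nearby representations are irreducible) so that the quotient is locally Hausdorff and hence a manifold.
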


 \begin{remark}
Huesener--Porti~\cite{HePo} prove that the condition of infinitesimal projective rigidity rel boundary persists under infinitely many Dehn fillings. They then show that there are infinitely many examples of one-cusped hyperbolic 3--manifolds that satsify the condition by studying fillings of the Whitehead link, a two-cusped manifold that satisfies the condition. For example, the figure-eight knot complement as well as all but finitely many twist knots are infinitesimally projective rigid rel boundary. 
In future work, we hope to determine exactly which manifolds of the Hodgson--Weeks cusped census are infinitesimally projectively rigid rel boundary. 
A related rigidity condition in closed hyperbolic three-manifolds was studied in Cooper--Long--Thistlethwaite~\cite{CoLoThist2,CoLoThist1} and shown experimentally to hold very often in small examples.
\end{remark}

The proof of Theorem \ref{smoothnessthm} requires several lemmas. Note that Lemma~\ref{Z^2smoothness} does not require $M$ to be orientable, but that Lemma~\ref{H^2injects} does.
\begin{lemma}\label{Z^2smoothness}
For each $1\leq i\leq k$, the restriction ${\rhyp}_i = \res_i(\rhyp)$ of $\rhyp$ to the $i^{th}$ peripheral subgroup is a smooth point of $\Rep(\Delta_i, \tilde G)$.
Hence,  $\res(\rhyp)$ is a smooth point of $\Rep(\Delta_1,\tilde G) \times \ldots \times \Rep(\Delta_k,\tilde G)$.
\end{lemma}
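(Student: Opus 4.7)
Identify $\Rep(\Delta_i, \tilde G)$ with the commuting variety $V = \{(A, B) \in \tilde G \times \tilde G : AB = BA\}$ via evaluation on a generating pair $(\alpha, \beta)$ of $\Delta_i \cong \ZZ^2$; then $(A_0, B_0) := (\rhyp_i(\alpha), \rhyp_i(\beta))$ is a pair of commuting unipotents $\exp N_1, \exp N_2$, with $N_1, N_2$ spanning the two-dimensional Lie algebra of horospherical translations at the $i$th cusp of $\HH^3/\Gamma$. The plan is to match the Zariski tangent dimension at $\rhyp_i$ to the dimension of an explicit smooth $18$-dimensional family of commuting pairs through $(A_0, B_0)$.

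First, applying Poincar\'e duality on $B\Delta_i = T^2$ with self-dual (via the Killing form) local coefficients $\g$ yields $H^2_{\rhyp_i}(\Delta_i, \g) \cong H^0_{\rhyp_i}(\Delta_i, \g) = \g^{\Ad \rhyp_i(\Delta_i)}$. In standard $\SL_4$-coordinates (taking $N_1 = E_{13} + E_{34}$ and $N_2 = E_{12} + E_{24}$), a direct matrix computation gives $\dim Z_\g(\{N_1, N_2\}) = 3 = \mathrm{rank}(\tilde G)$, with basis $\{N_1, N_2, E_{14}\}$. Combined with $\chi(T^2) = 0$, this forces $\dim H^1 = 2 \dim H^0 = 6$ and hence $\dim Z^1_{\rhyp_i}(\Delta_i,\g) = \dim B^1 + \dim H^1 = 12 + 6 = 18$.

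For the smooth $18$-dimensional family through $(A_0, B_0)$, combine three transverse pieces: (i) the $12$-dimensional $\tilde G$-conjugation orbit of $(A_0, B_0)$; (ii) the $4$-dimensional diagonalizable slice from Lemma~\ref{lem:slice}, transverse to conjugation; and (iii) a $2$-dimensional family of deformations within the abelian unipotent subgroup $\mathcal{U} = \exp(\mathrm{span}(N_1, N_2))$, for instance $(s, t) \mapsto (\exp((1 + s) N_1 + t N_2), B_0)$. Although the na\"ive $\mathcal{U}$-family is four-dimensional, it contributes only two $H^1$-classes because of two explicit coboundaries: taking $H = 2(E_{11} - E_{44})$, the common semisimple element of Jacobson--Morozov $\ssl_2$-triples for both $N_1$ and $N_2$, gives $b_{H/2} = (N_1, N_2)$, while $b_{E_{23} - E_{32}} = (N_2, -N_1)$.

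The combined parametrization has surjective differential onto $Z^1_{\rhyp_i}$ at the basepoint, so by the constant rank theorem it traces out a smooth $18$-dimensional analytic submanifold of $V$ through $(A_0, B_0)$; matching this with $\dim Z^1 = 18$ forces $V$ to be smooth at the basepoint, which proves the lemma. The main technical hurdle is the coboundary bookkeeping in (iii), made manageable by the simultaneous Jacobson--Morozov structure available on the cusp holonomy, which reflects the fact that both horospherical translations are dilated by the same $\RR_+$-factor in the Levi component of the cusp parabolic.
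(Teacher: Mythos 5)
Your cohomology computation matches the paper: $\dim H^0_{\rhyp_i}(\Delta_i,\g)=3$, Poincar\'e duality and $\chi(T^2)=0$ give $\dim H^1=6$, and $\dim B^1=15-3=12$, so $\dim Z^1_{\rhyp_i}(\Delta_i,\g)=18$. The difference lies in how you establish the matching lower bound on the local dimension of the representation variety. The paper simply invokes Richardson's theorem that $\Rep(\ZZ^2,\SL_4\CC)$ is an irreducible $18$-dimensional variety, so that the Zariski tangent dimension equalling $18$ immediately forces smoothness. You instead try to produce an explicit smooth $18$-parameter family of commuting pairs through $(A_0,B_0)$.

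Unfortunately your explicit family is only $16$-dimensional, not $18$. The problem is that the directions in piece (iii) are cohomologically redundant with directions already contained in the $4$-dimensional slice of Lemma~\ref{lem:slice}. Recall from~\eqref{eqn:defnofslice} that $\Phi(0,0,x,y)(\gamma_1)$ is constant in $(x,y)$ while $\Phi(0,0,x,y)(\gamma_2)=\exp(xN_2+yN_1)$ varies (in the paper's coordinates $x'_{0,0}=E_{12}+E_{24}=N_2$ and $y'_{0,0}=E_{13}+E_{34}=N_1$). The $\partial_x,\partial_y$ tangent cocycles along the cusp shape locus are therefore $(0,N_2)$ and $(0,N_1)$, where the notation records the values on the two generators. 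Your family (iii) produces the cocycles $(N_1,0)$ and $(N_2,0)$. Using your own coboundaries $(N_1,N_2)$ and $(N_2,-N_1)$, one finds $(N_1,0)=(N_1,N_2)-(0,N_2)\equiv -(0,N_2)$ and $(N_2,0)=(N_2,-N_1)+(0,N_1)\equiv (0,N_1)$ in $H^1$. Thus the span of the classes from (iii) coincides exactly with the span of the cusp-shape-locus classes already inside $V_{\slice_i}$, and also at the level of $Z^1$ the cocycles of (iii) lie in $B^1+T\slice_i$. The differential of your combined parametrization has rank $12+4=16$, not $18$, so the constant rank theorem gives a $16$-dimensional submanifold, which is not enough to conclude smoothness.

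The two missing directions are precisely the classes in $H^1_{\rhyp_i}(\Delta_i,\so(3,1))$ that are not realized by deformations staying inside the abelian unipotent subgroup $\mathcal U$: geometrically these correspond to deforming the cusp holonomy off the parabolic locus inside $\PSL_2\CC\cong\SO_0(3,1)$, e.g.\ to loxodromic pairs, which must be added by hand. Without them your argument does not establish that $\Rep(\Delta_i,\tilde G)$ has local dimension $18$ at $\rhyp_i$. Either supplement the family with two such $\SO(3,1)$-internal deformations and verify the resulting $6$ cocycles are independent modulo $B^1$, or (as the paper does) invoke Richardson's theorem for the commuting variety, which sidesteps the need for any explicit construction.
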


\begin{proof}
 We may work in $\tilde G_0 = \SL_4 \RR$ in place of $\tilde G$, since ${\rhyp}_i$ has image in this smaller group.
  The variety $\Rep(\Delta_i, \tilde G_0)$ is the set of real points of a complex affine variety $\Rep(\Delta_i,\SL_4 \CC)$, which is defined over $\RR$. It therefore suffices to prove that ${\rhyp}_i$ is a smooth point of $\Rep(\Delta_i,\SL_4 \CC)$.
By work of Richardson~\cite[Theorem C]{Rich}, $\Rep(\Delta_i,\SL_4 \CC)$ is an irreducible (complex) affine variety and contains a dense set of representations whose images consist of diagonalizable representations. Thus $\Rep(\Delta_i,\SL_4 \CC)$ is an 18--dimensional complex variety. Heuristically this can be seen as follows: pick two generators $\gamma_1,\gamma_2$ for $\Delta_i$. An element $\rho \in \Rep(\Delta_i,\SL_4 \CC)$ can map $\gamma_1$ arbitrarily and thus contributes $15$ degrees of freedom. The only condition on $\rho(\gamma_2)$ is that it commute with $\rho(\gamma_1)$. Generically, the image of $\gamma_1$ will be diagonalizable with distinct eigenvalues, so the centralizer $Z(\rho(\gamma_1))$ is conjugate to the diagonal subgroup, which is three-dimensional.

To prove the Lemma, we must therefore show that the Zariski tangent space to $\Rep(\Delta_i,\SL_4 \CC)$ is 18--dimensional at ${\rhyp}_i$. The image of $\Delta_i$ under ${\rhyp}_i$ is conjugate to a lattice in the Lie group of matrices of the form:
  $$\begin{pmatrix}
     1 & u & v & \frac{1}{2}(u^2+v^2)\\
     0 & 1 & 0 & u\\
     0 & 0 & 1 & v\\
     0 & 0 & 0 & 1
    \end{pmatrix}
$$
A simple calculation shows that the infinitesimal centralizer $H^0_{{\rhyp}_i}(\Delta_i,\g)$ is 3--dimensional. By Poincar\'e duality (see \eg Hodgson--Kerckhoff~\cite{HoKe}),
$$\dim H^0_{{\rhyp}_i}(\Delta_i,\g) = \dim H^2_{{\rhyp}_i}(\Delta_i,\g),$$ and therefore since the Euler characteristic of $\partial M$ is zero, we have that $H^1_{{\rhyp}_i}(\Delta_i,\g)$ has dimension six. Furthermore,
$$\dim H^0_{{\rhyp}_i}(\Delta_i,\g) + \dim B^1_{{\rhyp}_i}(\Delta_i,\g) = \dim \g = 15,$$ so the coboundaries $B^1_{{\rhyp}_i}(\Delta_i,\g)$ are 12--dimensional. Hence the Zariski tangent space $Z^1_{{\rhyp}_i}(\Delta_i, \g)$ has dimension 18 as desired.
\end{proof}

\begin{remark}
Despite Lemma~\ref{Z^2smoothness}, the representation ${\rhyp}_i$ does \emph{not} determine a smooth point of $\Char(\Delta_i, \tilde G)$. Indeed, the conjugacy class of ${\rhyp}_i$ contains the trivial representation in its closure. Hence, the naive quotient of $\Rep(\Delta_i, \tilde G)$ by conjugation is not Hausdorff at this point. The Mumford GIT quotient avoids failure of Hausdorff-ness by identifying ${\rhyp}_i$ with the trivial representation, which is not even a smooth point of $\Rep(\Delta_i, \tilde G)$.
\end{remark}
Next, we derive some relevant information about second cohomology groups from the infinitesimal projective rigidity condition.

\begin{lemma}\label{H^2injects}
 If $M$ is infinitesimally rigid rel boundary then the map
 $$\res_* \co H^2_{\rhyp}(M,\g)\to H^2_{\rhyp}(\partial M,\g)$$
 is an injection.
\end{lemma}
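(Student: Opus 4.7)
The plan is a standard Poincaré--Lefschetz duality argument combined with the long exact sequence of the pair $(M,\partial M)$, using that the adjoint representation $\g$ carries a non-degenerate invariant bilinear form (the Killing form on $\ssl_4$), hence is a self-dual local system over $M$. Orientability of $M$ and of $\partial M$ (a union of tori) is what allows us to invoke Poincaré--Lefschetz duality in this twisted setting.

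First I would write down the portion of the long exact sequence
\begin{equation*}
H^1_{\rhyp}(M,\g) \xrightarrow{\res_*} H^1_{\rhyp}(\partial M,\g) \to H^2_{\rhyp}(M,\partial M,\g) \xrightarrow{j^*} H^2_{\rhyp}(M,\g) \xrightarrow{\res_*} H^2_{\rhyp}(\partial M,\g),
\end{equation*}
together with the earlier piece $H^1_{\rhyp}(M,\partial M,\g) \xrightarrow{j^*} H^1_{\rhyp}(M,\g) \xrightarrow{\res_*} H^1_{\rhyp}(\partial M,\g)$. Exactness at $H^2_{\rhyp}(M,\g)$ says that $\ker(\res_*)$ equals the image of $j^*:H^2_{\rhyp}(M,\partial M,\g) \to H^2_{\rhyp}(M,\g)$, so it suffices to prove that this map $j^*$ is zero.

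The hypothesis of infinitesimal projective rigidity rel boundary is exactly that $\res_*:H^1_{\rhyp}(M,\g) \to H^1_{\rhyp}(\partial M,\g)$ is injective. By exactness of the earlier portion of the sequence, this is equivalent to the vanishing of the map $j^*:H^1_{\rhyp}(M,\partial M,\g) \to H^1_{\rhyp}(M,\g)$ (it has image $\ker\res_* = 0$). The crux is then to convert this vanishing in degree one to a vanishing in degree two via Poincaré--Lefschetz duality: the Killing form pairing together with cup product and integration over $M$ gives a non-degenerate pairing
\begin{equation*}
H^p_{\rhyp}(M,\partial M,\g) \otimes H^{3-p}_{\rhyp}(M,\g) \to H^3(M,\partial M,\RR) \cong \RR,
\end{equation*}
and under this pairing the map $j^*:H^2_{\rhyp}(M,\partial M,\g) \to H^2_{\rhyp}(M,\g)$ is adjoint to the map $j^*:H^1_{\rhyp}(M,\partial M,\g) \to H^1_{\rhyp}(M,\g)$. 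A linear map and its adjoint have the same rank, so the latter being zero forces the former to be zero as well, completing the proof.

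The only subtlety is verifying that the Poincaré--Lefschetz duality isomorphisms do identify the connecting map $j^*$ in degree two with the transpose of $j^*$ in degree one; this is standard (the two long exact sequences of $(M,\partial M)$ with coefficients in $\g$, in complementary degrees, are dual to one another under PLD), and is precisely the identification used by Heusener--Porti in \cite{HePo}, which I would cite rather than rederive.
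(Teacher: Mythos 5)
Your argument is correct, and it takes a genuinely different (and in some ways cleaner) route than the paper's. Both proofs start from the same long exact sequence and both ultimately rest on Poincar\'e--Lefschetz duality, but they exploit it differently. The paper runs a dimension count: it uses the ``half lives, half dies'' principle (itself a Poincar\'e duality consequence, namely that the image of $\res_*$ in $H^1_{\rhyp}(\partial M,\g)$ is Lagrangian) together with the earlier computation $\dim H^1_{\rhyp}(\partial M,\g) = 6k$ to conclude $\dim H^1_{\rhyp}(M,\g) = 3k = \dim H^2_{\rhyp}(M,\partial M,\g)$, and then chases exactness to see that $H^1_{\rhyp}(\partial M,\g)\to H^2_{\rhyp}(M,\partial M,\g)$ is onto. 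You instead observe that the hypothesis is equivalent, by exactness, to the vanishing of $j^*: H^1_{\rhyp}(M,\partial M,\g)\to H^1_{\rhyp}(M,\g)$, and then appeal directly to the standard fact that $j^*$ in degree two is the adjoint of $j^*$ in degree one under the duality pairing $H^p_{\rhyp}(M,\partial M,\g)\otimes H^{3-p}_{\rhyp}(M,\g)\to\RR$, so the one being zero forces the other to be zero. Your version avoids the dimension computations entirely (and so does not depend on the earlier lemma computing $\dim H^1_{\rhyp}(\Delta_i,\g)=6$), which makes it more self-contained and somewhat more robust; the paper's version has the advantage of producing the explicit dimension counts that it needs again later (\eg in the proof of Theorem~\ref{thm:submanifold}), so the ``half lives, half dies'' packaging serves a secondary purpose there.
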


\begin{proof}
 Consider the long exact sequence of the pair $(M,\partial M)$, where the first injection is by assumption:
\begin{align*}
 0\to H^1_{\rhyp}(M,\g)&\stackrel{\res_*}{\hookrightarrow} H^1_{\rhyp}(\partial M,\g)\\ {\to}
 &H^2_{\rhyp}(M,\partial M,\g)\to H^2_{\rhyp}(M,\g)\stackrel{\res_*}{\to} H^2_{\rhyp}(\partial M,\g)
\end{align*}
We showed above that $\dim H^1_{{\rhyp}_i}(\partial_i, \g) = 6$ for each $i$ and so we have that $\dim H^1_{\rhyp}(\partial M,\g)=6k$. By a standard Poincar\'e duality argument, known as ``half lives, half dies'' (see \eg Hodgson--Kerckhoff~\cite{HoKe}), the image of $H^1_{\rhyp}(M,\g)$ under the restriction map $\res_\ast$ is $3k$--dimensional and so $H^1_{\rhyp}(M,\g)$ is itself $3k$--dimensional. By Poincar\'e duality, we have that the group $H^2_{\rhyp}(M,\partial M, \g)$ is also $3k$--dimensional. We conclude that the map $H^1_{\rhyp}(\partial M,\g)\to H^2_{\rhyp}(M,\partial M,\g)$ must be a surjection and by exactness that  $\res_* \co H^2_{\rhyp}(M,\g)\to H^2_{\rhyp}(\partial M,\g)$ is also injective.
\end{proof}

We now prove Theorem \ref{smoothnessthm}.

\begin{proof}[Proof of Theorem \ref{smoothnessthm}.]
 To prove Theorem \ref{smoothnessthm} we must show that each infinitesimal deformation $u \in Z^1_{\rhyp}(\Gamma,\g)$ is integrable. It is well-known that integrability of $u$ follows if we can show that the infinitely many obstructions all vanish (see e.g. Heusener--Porti~\cite[Section 8.2.4]{HePo}). The obstructions to integrability are cohomology classes in $H^2_{\rhyp}(\Gamma, \g)$. However, if one of these obstruction classes is non-zero it would, by Lemma~\ref{H^2injects}, map to a non-zero class in $\bigoplus_{i=1}^k H^2_{\rhyp}(\Delta_i, \g)$ obstructing the integrability of $\res_* u$ in $\bigoplus_{i=1}^k H^1_{\rhyp}(\Delta_i, \g)$. By Lemma~\ref{Z^2smoothness}, this is impossible. Hence all the obstructions vanish and $u$ is an integrable infinitesimal deformation. This proves that $\Rep(\Gamma, \tilde G)$ is smooth at $\rhyp$. Since $\rhyp$ is an irreducible representation, the orbits of $\rhyp$ and all nearby representations are closed. Since the centralizer of all representations nearby $\rhyp$ is constant (equal to $\pm I$), we conclude that $\Char(\Gamma,\tilde G)$ is a manifold near the conjugacy class of $\rhyp$.
 \end{proof}

\subsection{The augmented restriction map}
In this subsection, we formulate and prove some results that are needed for the main transversality argument in the next section, where the basic goal will be to find deformations of the discrete faithful $\SO(3,1)$ representation with certain desired behavior along each of the peripheral subgroups. More specifically, we will construct a sub-manifold $\slice \subset \Rep(\ZZ \times \ZZ , \tilde G)$ consisting of representations with the desired behavior and then look for representations in $\Hom(\Gamma, \tilde G)$ whose restriction to each peripheral subgroup are conjugate into $\slice$. For technical reasons, in the execution of this strategy it will be more convenient to work with the following augmented restriction map.

\begin{definition}\label{augrest}
 Let $M$ be a finite volume hyperbolic 3--manifold with $k$ torus cusps and with fundamental group $\Gamma$. Then we define the \emph{augmented restriction map of $M$} denoted by $\widetilde \res \co \Rep(\Gamma,\tilde G)\times \tilde G^{k-1}\to \Rep(\Delta_1,\tilde G)\times \cdots \times \Rep(\Delta_k,\tilde G)$ by the formula $$(\rho,g_2,\ldots g_{k})\mapsto \left(\res_1(\rho),c(g_2) \cdot\res_2(\rho),\ldots,c(g_k)\cdot\res_{k}(\rho)\right),$$
 where $c(g)$ denotes the conjugation action by $g$.
\end{definition}

The main result about $\widetilde \res$ that we will need for the transversality argument in Section \ref{sec:diaghol} is:

\begin{theorem}\label{thm:submanifold}
Let $M$ be an orientable finite volume hyperbolic 3--manifold with $k$ cusps and with fundamental group  $\Gamma$, and let $\rhyp$ be the holonomy representation of the complete hyperbolic structure. Assume that $M$ satisfies Theorem~\ref{smoothnessthm} so that $\Rep(\Gamma, \tilde G)$ is smooth at $\rhyp$. Then the augmented restriction map 
$$\widetilde \res \co \Rep(\Gamma, \tilde G)\times \tilde G^{k-1} \to \Rep(\Delta_1, \tilde G)\times \cdots \times \Rep(\Delta_k, \tilde G)$$
is a local submersion onto a submanifold of codimension $3k$ at the point $(\rhyp, g_2, \ldots, g_k)$ where $g_2, \ldots, g_k$ are any elements of $\tilde G$.
\end{theorem}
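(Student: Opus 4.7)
The strategy is to verify the hypotheses of the constant rank theorem by a cohomological dimension computation. First I would establish that the domain and codomain are smooth manifolds of the same dimension $18k$ near the relevant points. Smoothness of $\Rep(\Gamma, \tilde G)$ at $\rhyp$ is Theorem~\ref{smoothnessthm}; its Zariski tangent space $Z^1_\rhyp(\Gamma, \g)$ has dimension $\dim B^1_\rhyp + \dim H^1_\rhyp = 15 + 3k$, using $H^0_\rhyp(\Gamma, \g) = 0$ (irreducibility) and the computation $\dim H^1_\rhyp(\Gamma, \g) = 3k$ from the proof of Lemma~\ref{H^2injects}. Together with the $\tilde G^{k-1}$ factor this gives $18k$. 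Smoothness of the codomain, with each factor of dimension $18$, is Lemma~\ref{Z^2smoothness}.

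Next, identifying tangent spaces with cocycle spaces and right-trivializing $\tilde G$, I would compute the differential of $\widetilde \res$ at $(\rhyp, g_2, \ldots, g_k)$ to be
$$(u, X_2, \ldots, X_k) \;\longmapsto\; \bigl(\res_1 u,\; \Ad_{g_2} \res_2 u + \partial_{\eta_2} X_2,\; \ldots,\; \Ad_{g_k} \res_k u + \partial_{\eta_k} X_k\bigr),$$
where $\eta_i = c(g_i)\,\res_i \rhyp$ and $\partial_\eta X$ denotes the coboundary cocycle $\gamma \mapsto X - \Ad_{\eta(\gamma)} X$.

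The core of the proof is the kernel computation. If $(u, X_2, \ldots, X_k)$ lies in the kernel, then $\res_1 u = 0$ and for each $i \geq 2$ the cocycle $\res_i u$ is cohomologous to zero. By the infinitesimal projective rigidity hypothesis (used in the form of Definition~\ref{def:infrig}), the restriction map is injective on $H^1$, forcing $[u] = 0 \in H^1_\rhyp(\Gamma, \g)$. Irreducibility of $\rhyp$ then yields a unique $v \in \g$ with $u = \partial_\rhyp v$. The condition $\res_1 u = 0$ becomes $v \in H^0_\rhyp(\Delta_1, \g)$, and for $i \geq 2$ the coboundary conditions translate to $\Ad_{g_i^{-1}} X_i + v \in H^0_\rhyp(\Delta_i, \g)$. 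By Lemma~\ref{Z^2smoothness} (and the proof of Lemma~\ref{H^2injects}), each peripheral infinitesimal centralizer is three-dimensional, giving total kernel dimension $3 + 3(k-1) = 3k$. Hence the rank at the base point equals $18k - 3k = 15k$.

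To conclude I would verify that this rank is locally constant. The dimensions of the tangent cocycle spaces $Z^1_\rho(\Gamma, \g)$ and $Z^1_{\res_i \rho}(\Delta_i, \g)$ are locally constant by smoothness; the peripheral centralizer dimensions $\dim H^0_{\res_i \rho}(\Delta_i, \g)$ remain equal to $3$ by Poincar\'e duality $H^0 \cong (H^2)^*$ on the torus combined with $\chi(\Delta_i) = 0$; and injectivity of the cohomological restriction map persists in an open neighborhood. The constant rank theorem then delivers the conclusion: a neighborhood of $(\rhyp, g_2, \ldots, g_k)$ is mapped submersively onto a submanifold of codimension $3k$. The main obstacle is the kernel computation itself, specifically invoking infinitesimal rigidity to reduce an infinitesimal deformation whose peripheral restrictions become trivial in cohomology to a genuine coboundary, after which the argument is linear algebra with peripheral centralizers.
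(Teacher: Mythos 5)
Your argument is correct and follows essentially the same route as the paper: compute the rank of $\widetilde\res_*$ at $(\rhyp, g_2, \ldots, g_k)$ using the cocycle/cohomology description of the tangent spaces, then show the rank is locally constant and invoke the constant rank theorem, with the inputs being Lemma~\ref{Z^2smoothness}, the irreducibility of $\rhyp$, and the ``half lives, half dies'' computation. The only difference is organizational: you compute the kernel of the differential directly (getting $\dim\ker = 3k$ and hence rank $= 18k - 3k = 15k$), whereas the paper computes the rank directly by showing the coboundary part of the source surjects onto the coboundary part of the target (dimension $12k$) and adding the rank $3k$ of the induced map on $H^1$; these are dual bookkeepings of the same count. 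One small point worth making explicit in your local constancy step: ``injectivity of $\res_*$ persists'' is true but is not an obviously open condition on its own --- it follows because $\dim H^1_\rho(\Gamma,\g)$ stays equal to $3k$ near $\rhyp$ (since $\dim Z^1_\rho$ is locally constant by smoothness and $\dim B^1_\rho = 15$ by irreducibility) while the rank of $\res_*$ stays exactly $3k$ by the Lagrangian/half-lives-half-dies argument, so $\res_*$ remains injective; the paper sidesteps this by phrasing the constancy argument purely in terms of the rank of $\res_*$ on $H^1$, which is perhaps marginally more robust.
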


\begin{proof}
Let $U$ be a smooth neighborhood of $\rho_{hyp}$ in $\Rep(\Gamma, \tilde G)$ whose elements are all irreducible representations, and let $W=U\times \tilde G^{k-1}$. The proof proceeds by showing that the rank of the augmented restriction map $\widetilde \res$ is constant and equal to $15k$ for the points in $W$. 

We begin by identifying the relevant tangent spaces. The tangent space $T_\rho \Rep(\Gamma,\tilde G)$ is $Z^1_\rho(\Gamma,\g)$ and the tangent space $T_\rho \Rep(\Delta_i,\tilde G)$ is  $Z^1_\rho(\Delta_i,\g)$. 
Since any $\rho \in U$ is irreducible we may identify $T_{g} \tilde G = B^1_{c(g)\rho}(\Gamma,\g)$ for any $g \in \tilde G$. As a result, for $p=(\rho ,g_2,\ldots,g_{k})\in W$, we may identify:   
\begin{equation*} %\label{e:splitting}
 T_p\left(\Rep(\Gamma,\tilde G)\times \tilde G^{k-1}\right) = Z^1_\rho(\Gamma,\g)\oplus \bigoplus_{i=2}^{k}B^1_{c(g_i)\rho}(\Gamma,\g).
\end{equation*}
The following diagram commutes:
$$
\xymatrix{\ar[d]_{\varpi_1} Z^1_\rho(\Gamma,\g) \oplus \bigoplus_{i=2}^{k}B^1_{c(g_i)\rho}(\Gamma,\g)\ar[r]^-{\widetilde{\res}_*} & \ar[d]^{\varpi_2} Z^1_{\rho}(\Delta_{1},\g)  \oplus \bigoplus_{i=2}^{k} Z^1_{c(g_i)\rho}(\Delta_{i},\g)\\
H^1_\rho(\Gamma,\g)\ar[r]^{\res_*} & \bigoplus_{i=1}^k H^1_\rho(\Delta_i,\g)
}
$$
where $\varpi_1$ is projection to cohomology and $\varpi_2$ is projection to cohomology in each factor followed by the identification of $H^1_\rho(\Delta_i,\g)$ and $H^1_{c(g_i)\rho}(\Delta_i,\g)$ induced by $c(g_i)$. Then $\ker(\varpi_1)$ is given by  $B^1_\rho(\Gamma,\g)\oplus\bigoplus_{i=2}^{k}B^1_{c(g_i)\rho}(\Gamma,\g)$ and $\ker(\varpi_2)$ is given by $B^1_\rho(\Delta_1,\g)\oplus\bigoplus_{i=2}^{k}B^1_{c(g_i)\rho}(\Delta_{i},\g)$ so it is clear that $\widetilde \res_*(\ker(\varpi_1))=\ker(\varpi_2)$. 
After possibly shrinking $U$ we may assume for any $\rho \in U$ that for each $i$, the dimension of $H^1_{\rho}(\Delta_i,\g)$ is $6$, that $\Rep(\Delta_i,\tilde G)$ is smooth and $18$--dimensional at $c(g_i)\res_i(\rho)$, and therefore that the dimension of $\ker(\varpi_2)$ is $12k$. 
Furthermore, the ``half lives, half dies'' argument from Lemma \ref{H^2injects} shows that the rank of $\res_* \co H^1_\rho(\Gamma,\g)\to \bigoplus_{i=1}^kH^1_{\rho}(\Delta_i,\g)$, and hence of $\varpi_2\circ \widetilde \res_*$, is $3k$. Combining these facts we see that the rank of $\widetilde\res_*$ must be $15k$.  
\end{proof}

\begin{remark}
When $k = 1$, $\res = \widetilde \res$ are the same. However, for $k \geq 2$, while the local image of $\res$ is still a smooth submanifold, the codimension is larger than that of $\widetilde \res$.  
\end{remark}

%
%%%%%%
%%%%%%%%%%%%%%%%%%%%%%%%%%%%%%%%%%%%%%%%%%
%
%
%
%%%%%%%%%%%%%%%%%%%%%%%%%%%%%%%%%%%%%%%%%%
\section{Diagonalizable peripheral holonomy}\label{sec:diaghol}
Recall the notation $G = \PGL_4 \RR$, $\tilde G = \SL^\pm_4 \RR$ and $\tilde G_0 = \SL_4 \RR$ from the previous section. Also recall that $M$ is an orientable finite volume hyperbolic three-manifold with $k$ torus cusps, $\Gamma=\pi_1 M$, the $i^{th}$ cusp is denoted $\partial_i$, and $\Delta_i$ is a peripheral subgroup for $\partial_i$. In this section we prove:

\begin{theorem}
\label{thm:diagonalizable}
Assume that $M$ is infinitesimally projectively rigid rel $\partial M$. Then there exists a smooth path of representations $\rho_t \in \Rep(\Gamma, \tilde G)$ such that $\rho_0 = \rhyp$ is the holonomy representation of the complete hyperbolic structure and $\rho_t(\Delta_i)$ is diagonalizable over the reals for all $t\neq 0$ and $i \in \{1, \ldots, k\}$.
\end{theorem}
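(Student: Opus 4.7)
The plan is a transversality argument in the product of peripheral representation spaces. I will take for granted Lemma \ref{lem:slice}, which (based on the discussion in the introduction) produces, for each cusp $i$, a smooth four-dimensional family $\slice_i \subset \Rep(\Delta_i, \tilde G)$ passing through $\rhyp_i := \res_i(\rhyp)$ at parameter $0$, consisting (for nonzero parameter) of representations simultaneously diagonalizable over $\RR$, and with tangent space at $\rhyp_i$ transverse to the conjugation orbit. Set $\slice := \slice_1 \times \cdots \times \slice_k$, a $4k$-dimensional submanifold of $\prod_i \Rep(\Delta_i, \tilde G)$. By Theorem \ref{thm:submanifold}, the augmented restriction map $\widetilde\res$ is a local submersion near $p := (\rhyp, \id, \ldots, \id)$ onto a codimension $3k$ submanifold $\mathcal I \subset \prod_i \Rep(\Delta_i, \tilde G)$.

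The heart of the proof will be to verify that $\mathcal I$ and $\slice$ meet transversely at $\res(\rhyp)$. A direct computation of $d\widetilde\res$ at $p$ will show that $T_{\res(\rhyp)} \mathcal I$ contains every tuple of coboundaries in $\bigoplus_i B^1_{\rhyp_i}(\Delta_i, \g)$ and projects onto $\res_*(H^1_\rhyp(\Gamma, \g))$ under the quotient map to $\bigoplus_i H^1_{\rhyp_i}(\Delta_i, \g)$. Since $\slice_i$ is transverse to conjugation orbits, its tangent space $T_i$ at $\rhyp_i$ injects into $H^1_{\rhyp_i}(\Delta_i, \g)$ with image $\bar V_i$ of dimension four. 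Transversality then reduces to the linear statement
\[
\res_*(H^1_\rhyp(\Gamma, \g)) + \bigoplus_{i=1}^k \bar V_i = \bigoplus_{i=1}^k H^1_{\rhyp_i}(\Delta_i, \g),
\]
an equality of subspaces in a $6k$-dimensional ambient in which the two left-hand summands have dimensions $3k$ and $4k$. My plan to verify it is to unpack the construction of $\slice_i$ in Lemma \ref{lem:slice} and identify within each $\bar V_i$ a three-dimensional subspace complementary to $\res_{i*}(H^1_\rhyp(\Gamma, \g))$ inside $H^1_{\rhyp_i}(\Delta_i, \g)$. This linear-algebraic verification is the main obstacle, as it is the step that actually uses the geometric content of the slice.

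Granted the transversality, $\mathcal I \cap \slice$ is a smooth submanifold of dimension $4k + 15k - 18k = k$, in particular positive-dimensional, so I choose a smooth curve $\tau_t$ inside it with $\tau_0 = \res(\rhyp)$. Using that $\widetilde\res$ is a submersion onto $\mathcal I$, I lift $\tau_t$ to a smooth curve $(\rho_t, g_2(t), \ldots, g_k(t))$ in the domain with $\rho_0 = \rhyp$ and $g_i(0) = \id$. By construction of $\slice$, for each $t \neq 0$ and each $i$ the representation $c(g_i(t))\, \res_i(\rho_t)$ lies in $\slice_i$ and is therefore diagonalizable over $\RR$; hence $\rho_t|_{\Delta_i}$, being a conjugate, is diagonalizable as well, and $\rho_t$ is the desired path.
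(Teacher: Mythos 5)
Your overall strategy matches the paper's: combine Theorem~\ref{thm:submanifold} with the slice from Lemma~\ref{lem:slice}, verify transversality, extract a curve, and conclude diagonalizability. However, there is a critical error in the final step, and your transversality plan is also problematic.

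\textbf{The cusp shape locus.} Your conclusion that ``$c(g_i(t))\,\res_i(\rho_t)$ lies in $\slice_i$ and is therefore diagonalizable over $\RR$'' is false. Lemma~\ref{lem:slice} only asserts that $\Phi(a,b,x,y)$ is diagonalizable \emph{when $(a,b)\neq(0,0)$}; the subfamily $\Phi(0,0,x,y)$ (the cusp shape locus) consists of unipotent parabolic representations. A priori, a curve in $\mathcal I\cap\slice$ through $\res(\rhyp)$ could be tangent to, or even lie inside, the cusp shape locus, in which case $\rho_t(\Delta_i)$ would never become diagonalizable. This is precisely what Lemma~\ref{lem:nontrivial-def} and the bulk of the paper's proof of Theorem~\ref{thm:diagonalizable} are designed to rule out: one needs a class $[z]\in H^1_{\rhyp}(\Gamma,\mathfrak v)$ restricting nontrivially to every $\Delta_i$, and then one shows that any nonzero $\res_*[u]\in V_S$ lying in $\res_*(\spa\{[z]\}\oplus H^1_{\rhyp}(\Gamma,\so(3,1)))$ must have nonzero $[z]$-component, hence avoids the cusp shape locus for \emph{every} $i$ simultaneously. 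Without such an argument your proof produces no diagonalizable representations at all.

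\textbf{The transversality verification.} Your proposed method --- finding within each $\bar V_i$ a three-dimensional subspace complementary to $\res_{i*}(H^1_\rhyp(\Gamma,\g))$ in $H^1_{\rhyp_i}(\Delta_i,\g)$ --- does not suffice. First, $\res_{i*}(H^1_\rhyp(\Gamma,\g))$ (the projection of a $3k$-dimensional Lagrangian to one factor) need not be three-dimensional. Second, and more fundamentally, a factor-by-factor complement does not imply the global direct sum decomposition: for a subspace $L\subset\bigoplus_i W_i$, having $U_i\oplus\operatorname{proj}_i(L)=W_i$ for each $i$ does \emph{not} give $\bigoplus_i U_i + L = \bigoplus_i W_i$. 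The paper avoids this by a global argument: Lemma~\ref{lem:complex-evals} (Calabi--Weil rigidity) shows $V_S\cap\res_*(H^1_{\rhyp}(\Gamma,\so(3,1)))=0$, and since $\dim V_S=4k$ and $\dim\res_*(H^1_{\rhyp}(\Gamma,\so(3,1)))=2k$ in a $6k$-dimensional ambient, the sum is everything. This is also the same Lemma~\ref{lem:complex-evals} input needed for the cusp-shape-locus argument above, so both gaps in your proof are addressed by the same missing ingredient.
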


 The proof is a transversality argument in the product of the representation spaces of the boundary tori.
 Let us give the rough idea in the simpler case that there is only one cusp, whose peripheral subgroup we denote by~$\Delta$: Theorem~\ref{thm:submanifold} gives that the restriction map $\res$ maps a neighborhood of $\rhyp$ in $\Rep(\Gamma, \tilde G)$ onto a submanifold of $\Rep(\Delta, \tilde G)$. This submanifold has codimension three and is smoothly foliated by conjugation orbits.
  Now, the key ingredient for the proof is the construction of a smooth four-dimensional partial slice $\slice$ in $\Rep(\Delta, \tilde G)$ which is transverse to $\res$ at $\rhyp$ and all of whose representations are either diagonalizable with real eigenvalues or lie in a unipotent (parabolic) subgroup of $\SO(3,1)$. The transverse intersection of $\slice$ with the image of  $\res$ gives a one-dimensional family in $\Rep(\Delta, \tilde G)$ through $\rhyp$, in which all representations, except the restriction of $\rhyp$, are diagonalizable. This one-dimensional family is the image of a one-dimensional path in $\Rep(\Gamma, \tilde G)$ as desired.

Let us begin the proof by describing the four-dimensional partial slice in the following general setting: Let $\Delta \cong \mathbb Z \times \mathbb Z$ and let $\rhyp$ be a representation taking $\Delta$ to a lattice in a unipotent subgroup of $\SO(3,1)$. The four-dimensional partial slice~$\slice$ is the image of the following map $\Phi \co \RR^4 \to \Rep(\Delta, \tilde G_0)$.
 We use coordinates $(a,b,x,y)$ on $\RR^4$ and generators $\gamma_1, \gamma_2$ for  $\Delta \cong \ZZ \times \ZZ$. Define:
\begin{align*} %\label{eqn:defnofslice}
\Phi(a,b,x,y)(\gamma_1) &:=  \exp \begin{pmatrix}
     0 & 1 & 0 & 0\\
     0 & a & b & 1\\
     0 & b & -a & 0\\
     0 & 2(a^2+b^2) & 0 & 0
    \end{pmatrix}\\ \nonumber
   \Phi(a,b,x,y)(\gamma_2) &:= \exp  \begin{pmatrix}
     0 & x &y & 0\\
     0 & ax+by & bx-ay & x\\
     0 & bx-ay & -ax-by & y\\
     0 & 2(a^2+b^2)x & 2(a^2+b^2)y & 0
    \end{pmatrix}
\end{align*}
A simple computation checks that $\Phi(a,b,x,y)(\gamma_1)$ and $\Phi(a,b,x,y)(\gamma_2)$ commute.
We may assume using conjugacy that $\rhyp$ coincides with $\Phi(0,0,u,v)$ where $u+iv$ is the cusp shape of the cusp $\rhyp(\Delta)\backslash \HH^3$ with respect to the generators $\gamma_1$ and $\gamma_2$. When $a = b = 0$ and $x,y$ are allowed to vary, $\Phi$~gives a global slice for the discrete faithful unipotent $\SO_0(3,1)$ representations of $\Delta$; the restriction of $\Phi$ to the $xy$--coordinate plane is well-known to be transverse to conjugation. Geometrically, $\Phi(0,0,x,y)$ parameterizes the conjugacy classes of holonomy representations of all possible torus cusps of hyperbolic three-manifolds.

\begin{definition}
We refer to the collection of representations $\Phi(0,0,x,y)$, for all $y \neq 0$, as the \emph{cusp shape locus}.
\end{definition}

The following Lemma is the most important ingredient in the proof of Theorem~\ref{thm:diagonalizable}. Its proof will be given in the following subsection.

 \begin{lemma}\label{lem:slice}
Let $u,v \in \RR$ with $v \neq 0$. Then near the point $(a,b,x,y) = (0,0,u,v)$, the map $\Phi$ is a local immersion which is never tangent to conjugation orbits. Further, for each $(a,b) \neq (0,0)$, the representation $\Phi(a,b,x,y)$ is diagonalizable over the reals.
 \end{lemma}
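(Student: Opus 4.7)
The plan is to prove the lemma in two independent parts: real diagonalizability for $(a,b) \neq (0,0)$, and local immersion plus non-tangency to conjugation orbits. The latter reduces by openness of maximal rank to an infinitesimal check at the base point $(0,0,u,v)$.

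For diagonalizability, the key structural observation is that $L_2(a,b,x,y) = x L_1(a,b) + y L_1'(a,b)$, where $L_1'(a,b) := L_2(a,b,0,1)$, and that $[L_1, L_1'] = 0$ — a direct computation, also forced by the commutation of $\Phi(\gamma_1)$ and $\Phi(\gamma_2)$ for all $x, y$. It thus suffices to diagonalize $L_1$ and $L_1'$ simultaneously over $\RR$. Cofactor expansion along the first column gives the characteristic polynomial
\[
\chi_{L_1}(\lambda) = \lambda \bigl(\lambda^3 - 3 r^2 \lambda - 2 a r^2\bigr), \qquad r^2 := a^2 + b^2,
\]
whose depressed cubic factor has discriminant $108\, r^4 b^2$. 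For $b \neq 0$, this yields four distinct real eigenvalues for $L_1$, so $L_1$ diagonalizes over $\RR$ and its commutant (the corresponding diagonal torus) contains $L_1'$. For $b = 0$ with $a \neq 0$, the cubic factors as $(\lambda + a)^2(\lambda - 2a)$, and a direct rank computation shows the $(-a)$-eigenspace of $L_1$ is two-dimensional, so $L_1$ still diagonalizes over $\RR$; the parallel characteristic polynomial $\chi_{L_1'}(\lambda) = \lambda(\lambda^3 - 3r^2 \lambda + 2 b r^2)$ of $L_1'$ is treated symmetrically. Exponentiating yields simultaneous real diagonalizability of $\Phi(a,b,x,y)(\gamma_1)$ and $\Phi(a,b,x,y)(\gamma_2)$.

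For the immersion and non-tangency at $(0,0,u,v)$: by Lemma~\ref{Z^2smoothness}, $\dim Z^1_\rhyp(\Delta, \g) = 18$, $\dim B^1_\rhyp(\Delta, \g) = 12$, and $\dim H^1_\rhyp(\Delta, \g) = 6$. The task reduces to showing that the four partial derivatives $\partial_a \Phi, \partial_b \Phi, \partial_x \Phi, \partial_y \Phi$ project to linearly independent classes in $H^1$, since this simultaneously yields injectivity of $d\Phi$ and triviality of $d\Phi(\RR^4) \cap B^1$. The pair $\partial_x \Phi, \partial_y \Phi$ is the classical two-parameter slice of complex cusp-shape deformations, well known to descend to two independent classes (different $(x,y)$ produce non-conjugate parabolic $\PO(3,1)$-representations of $\ZZ \times \ZZ$). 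For $\partial_a \Phi$ and $\partial_b \Phi$, compute the cocycle values on $\gamma_1$ and $\gamma_2$ via the Duhamel identity
\[
\partial \exp L = \int_0^1 \exp((1-s) L)\, (\partial L)\, \exp(s L)\, ds;
\]
since $L_1(0,0) = E_{12} + E_{24}$ and $L_2(0,0,u,v) = u E_{12} + v E_{13} + u E_{24} + v E_{34}$ are nilpotent of order three, each integral reduces to a finite polynomial expression in $u, v$. A direct linear-algebra check in $\ssl_4$, attempting to solve the coboundary system $u(\gamma_i) = (1 - \Ad_{\rhyp(\gamma_i)})\, v$ for $i = 1, 2$ with a common $v$, then verifies independence modulo $B^1$.

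The main obstacle is this final coboundary check. Trace invariants are powerless here: Part 1 gives $\tr(\Phi(\gamma_1)^m\Phi(\gamma_2)^n) = 4 + O(r^2)$ for every word, so every conjugation-invariant polynomial has vanishing first-order partials in $a$ and $b$ at $(0,0,u,v)$. The non-coboundary property must therefore be detected at the cocycle level: the $\gamma_1$ equation determines $v \in \ssl_4$ only modulo the three-dimensional centralizer of $\rhyp(\gamma_1)$, and the residual obstruction lies in the $\gamma_2$ equation, where the nontrivial $v$-dependent terms (driven by the imaginary part of the cusp shape) prevent a common $v$ from existing. This is also where the hypothesis $v \neq 0$ enters essentially; the degenerate case $v = 0$ would make $\rhyp|_\Delta$ fail to be a genuine rank-two cusp.
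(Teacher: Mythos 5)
Your treatment of diagonalizability is correct and genuinely different from the paper's: you compute the characteristic polynomials of $x'_{a,b}$ and $y'_{a,b}$ directly, check that the discriminant $108\,r^4b^2$ of the depressed cubic is nonnegative, and handle the $b=0$ degeneracy by a rank count. The paper instead never touches a characteristic polynomial; it exhibits $x'_{a,b},y'_{a,b}$ as explicit conjugates $R(\theta)Q(t)\,\cdot\,Q(t)^{-1}R(\theta)^{-1}$ of diagonal matrices in the standard Cartan $\mathfrak a$, coming from the geometric picture of four fixed points on $\partial_\infty\HH^3$, so diagonalizability is built into the construction. Your route is more elementary and self-contained; the paper's buys a geometric description of the eigenvectors that is reused elsewhere (Lemma~\ref{lem:attractors}, Section~\ref{boundarygeom}).

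The transversality half, however, has a real gap: you reduce correctly to showing that the four partials span a $4$-dimensional subspace of $H^1_{\rhyp}(\Delta,\g)$ meeting $B^1$ trivially, and you correctly observe (echoing Remark~\ref{rem:mistake}) that trace/eigenvalue first-order data cannot detect this. But the sentence \emph{``a direct linear-algebra check in $\ssl_4$\dots then verifies independence modulo $B^1$''} is precisely the content of the lemma, and you do not carry it out; the last paragraph is a description of what a proof would have to accomplish, not a proof. You would need to write down the Duhamel cocycles $\partial_a\Phi,\partial_b\Phi$ explicitly, set up the affine system $u(\gamma_1)=(1-\Ad_{\rhyp(\gamma_1)})v$, $u(\gamma_2)=(1-\Ad_{\rhyp(\gamma_2)})v$, and show it has no solution $v\in\g$, using $v\ne 0$. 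The paper avoids this head-on computation by a change of variable: Proposition~\ref{prop:algebras-transverse} shows that the two-parameter family of abelian subalgebras $\mathfrak c_{a,b}=\mathrm{span}(x'_{a,b},y'_{a,b})$ is transverse to conjugation by computing in $\mathbb P\bigl(\bigwedge^2\ssl_4\RR\bigr)$ and isolating the coefficients of $e_{13}\wedge e_{33}$ and $e_{12}\wedge e_{22}$, which the infinitesimal adjoint action manifestly cannot produce. A coboundary cocycle would make $\mathfrak c_{a,b}$ move by conjugation to first order, so by the Proposition $(\dot a,\dot b)=(0,0)$, and then the standard transversality of the cusp-shape slice kills $(\dot x,\dot y)$. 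That bivector trick, or some equivalent explicit computation, is what is missing from your proposal.
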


The eigenvalues of the generators $\Phi(a,b,x,y)(\gamma_i)$ may be computed explicitly. They are most naturally described using certain branched polar coordinates around the cusp shape locus: $(a,b,x,y) = (t\cos 3\theta, t \sin 3\theta,x,y)$. In these coordinates the eigenvalues of $\Phi(t\cos 3\theta,t \sin 3\theta,x,y)(\gamma_1)$ are
 \begin{align*}
 (1, \exp(2t\cos \theta), \exp(-t(\cos \theta +& \sqrt{3} \sin \theta)),\\ \exp(t(-\cos \theta + \sqrt{3} \sin \theta))),
 \end{align*}
 and the eigenvalues of  $\Phi(t\cos 3\theta,t \sin 3\theta,x,y)(\gamma_2)$ (listed with respect to the same eigenbasis) are
 \begin{align*}
 ( 1, \exp(2t(x\cos \theta + y \sin \theta)), \exp(-t((x - \sqrt{3}y)\cos \theta + &(\sqrt{3}x + y) \sin \theta)),  \\ \exp(-t((x + \sqrt{3}y)\cos \theta + (-\sqrt{3}x + y) \sin \theta))).
 \end{align*}

 Observe that when moving away from the cusp shape locus (\ie increasing $t$ from zero) in any direction (\ie for any value of $3\theta$), the four eigenvalues of $\Phi(t\cos 3 \theta, t \sin 3 \theta,x,y)(\gamma)$ vary as smooth real-valued functions of~$t$ with distinct first derivative for at least some (generic) $\gamma \in \Delta$. However, more is required to show that $\Phi$ is not tangent to the conjugation orbit (see Remark~\ref{rem:mistake}). The complete proof of Lemma~\ref{lem:slice} will be given in the following subsection, together with a more geometric description of the representations $\Phi(a,b,x,y)$.

Let us now return to the context of Theorem~\ref{thm:diagonalizable}.
In order to state the next lemma, let us introduce a useful splitting of $\g$ (see Johnson--Millson~\cite{JohnMill} for more details). Let $\g = \so(3,1) \oplus \mathfrak v$ be the Killing-orthogonal splitting of the Lie algebra~$\g$; the splitting is invariant under the adjoint action of $\OO(3,1)$. Since the representation $\rhyp$ has image in the subgroup $\OO(3,1) \subset \tilde G$, all relevant cohomology groups split and the restriction map $\res \co H^1_{\rhyp}(\Gamma, \g) \to \bigoplus_{i=1}^k H^1_{{\rhyp}}(\Delta_i, \g)$ splits into the direct sum of the two maps:
\begin{align*}
\res_{\so(3,1)} \co & H^1_{\rhyp}(\Gamma, \so(3,1)) \to \bigoplus_{i=1}^k H^1_{{\rhyp}}(\Delta_i, \so(3,1)),\\
\res_{\mathfrak{v}} \co & H^1_{\rhyp}(\Gamma, \mathfrak{v}) \to \bigoplus_{i=1}^k H^1_{{\rhyp}}(\Delta_i, \mathfrak{v}).
\end{align*}
Note that for each $i \in \{1, \ldots, k\}$, $\dim H^1_{{\rhyp}}(\Delta_i, \mathfrak{v}) = 2$ (see Heusener--Porti~\cite[Section 5.1]{HePo}).

\begin{lemma}\label{lem:nontrivial-def}
There exists a cohomology class of infinitesimal deformations $[z] \in H^1_{\rhyp}(\Gamma, \mathfrak v)$ whose restriction $\res_i([z]) \in H^1_{{\rhyp}}(\Delta_i, \mathfrak v)$ is non-trivial for all $i \in \{1, \ldots, k\}$.
\end{lemma}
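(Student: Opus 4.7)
The plan is to identify $L := \res_\mathfrak{v}(H^1_\rhyp(\Gamma, \mathfrak{v}))$ as a Lagrangian subspace of the symplectic space $H^1_\rhyp(\partial M, \mathfrak{v}) \cong \bigoplus_{i=1}^k H^1_\rhyp(\Delta_i, \mathfrak{v})$ coming from Poincar\'e duality, and then to use a short linear-algebra argument to produce a class in $L$ with nontrivial projection to every summand.

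First, I would note that the splitting $\g = \so(3,1)\oplus\mathfrak{v}$ is $\Ad_\rhyp$-invariant and Killing-orthogonal, and that the Killing form of $\g$ restricts to a nondegenerate invariant symmetric pairing on $\mathfrak{v}$. Combined with Weil rigidity (which gives injectivity of $\res_{\so(3,1)}$), the infinitesimal projective rigidity rel boundary hypothesis reduces to the injectivity of $\res_\mathfrak{v}$. Applying ``half lives, half dies'' exactly as in the proof of Lemma~\ref{H^2injects}, but now with $\mathfrak{v}$ coefficients, yields that $L$ is Lagrangian in $H^1_\rhyp(\partial M, \mathfrak{v})$ with respect to the symplectic form induced by cup product and the $\mathfrak{v}$-pairing. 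Since $\dim H^1_\rhyp(\Delta_i, \mathfrak{v}) = 2$ for each $i$, the ambient space has dimension $2k$ and so $\dim L = k$. The peripheral decomposition is symplectically orthogonal, and each summand $H^1_\rhyp(\Delta_i, \mathfrak{v})$ is itself a $2$-dimensional symplectic subspace.

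The key step is to verify that the projection $\pi_i|_L \neq 0$ for each $i$. If, instead, $\pi_i(L) = 0$, then $L \subset \ker\pi_i = \bigoplus_{j\neq i} H^1_\rhyp(\Delta_j, \mathfrak{v})$; since the restriction of $\omega$ to $\ker\pi_i$ is the symplectic form of that summand, $L$ would be a $k$-dimensional \emph{isotropic} subspace of a $(2k-2)$-dimensional symplectic space, contradicting the bound of $k-1$ on maximal isotropic dimension. Consequently each $\ker(\pi_i|_L)$ is a proper subspace of the real vector space $L$. Since $L$ cannot be the union of finitely many proper subspaces, there exists $\xi \in L \setminus \bigcup_i \ker(\pi_i|_L)$. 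By injectivity of $\res_\mathfrak{v}$, the class $\xi$ lifts to a unique $[z] \in H^1_\rhyp(\Gamma, \mathfrak{v})$, which satisfies $\res_i([z]) = \pi_i(\xi) \neq 0$ for every $i$, as required.

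The one place that requires care is checking that the symplectic/Lagrangian picture really does apply to $\mathfrak{v}$ coefficients; this is the main potential obstacle but is resolved by observing that the $\Ad_\rhyp$-invariant splitting $\g = \so(3,1) \oplus \mathfrak{v}$ is respected by all the relevant long exact sequences and Poincar\'e duality identifications, so one can repeat the argument of Lemma~\ref{H^2injects} separately on the $\mathfrak{v}$ summand. After that, the proof is purely linear algebra in a symplectic vector space.
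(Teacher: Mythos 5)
Your proof is correct and follows essentially the same strategy as the paper's: identify $L = \res_\mathfrak{v}\bigl(H^1_\rhyp(\Gamma,\mathfrak v)\bigr)$ as a Lagrangian of the cup-product form on the orthogonal direct sum $\bigoplus_i H^1_\rhyp(\Delta_i,\mathfrak v)$, and observe that a $k$-dimensional isotropic subspace cannot sit inside the $(2k-2)$-dimensional complementary symplectic subspace, so every $\pi_i|_L \neq 0$. Where the paper simply asserts that ``some linear combination'' of classes $[z_j]$ with $\res_j([z_j])\neq 0$ does the job, you fill in the intended justification by noting that $L$ cannot be the union of the finitely many proper subspaces $\ker(\pi_i|_L)$ --- this is a faithful expansion, not a different route.
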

\begin{proof}
The image $L = \res(H^1_{\rhyp}(\Gamma, \mathfrak v))$ is a Lagrangian subspace (see \cite[Section 5.1]{HePo})  for the cup product pairing in $\bigoplus_{i=1}^k H^1_{{\rhyp}}(\Delta_i, \mathfrak{v})$ and thus has dimension $k$. Note that under the cup product pairing, the direct sum is orthogonal. The projection onto any single factor is not zero, or else $L$ would be a Lagrangian subspace of the direct sum of the complementary $k-1$ factors, which is impossible since $\dim L = k$.  Hence for each $j \in \{1, \ldots, k\}$, there exists $[z_j] \in H^1_{\rhyp}(\Gamma, \mathfrak v)$ such that $\res_j([z_j]) \neq 0$. Then some linear combination $[z]$ of $\{[z_j]\}_{j=1}^k$ satisfies the conclusion of the Lemma.
\end{proof}

 The next Lemma is a basic consequence of Calabi--Weil rigidity.
\begin{lemma}\label{lem:complex-evals}
Let $i \in \{1,\ldots, k\}$, let $\mu_i$ be a non-trivial element of $\Delta_i$, and let $[v] \in H^1_{\rhyp}(\Gamma, \so(3,1))$ have non-trivial restriction to $\mu_i$. Then the eigenvalues of at least one element of $\Delta_i$ (although possibly not $\mu_i$) become complex along any path in $\Rep(\Gamma, \tilde G)$ tangent to any cocycle representative $v \in Z^1_{\rhyp}(\Gamma, \g)$ of the class $[v]$.
\end{lemma}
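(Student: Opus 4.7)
The plan is to exploit Thurston's hyperbolic Dehn surgery parameterization of cusp deformations together with the key fact that every cusp modulus of a complete finite-volume hyperbolic three-manifold is a non-real complex number. First, observe that the conclusion depends only on the class $[v]$: any two cocycle representatives differ by a coboundary $b(\gamma) = X - \Ad_{\rhyp(\gamma)} X$, and the corresponding paths in $\Rep(\Gamma, \tilde G)$ differ to first order by the conjugation path $\exp(tX)$, which preserves eigenvalues. Hence I may choose a representative $v \in Z^1_{\rhyp}(\Gamma, \so(3,1))$ and, using smoothness of $\Rep(\Gamma, \SO(3,1))$ at $\rhyp$ (the $\so(3,1)$ component of Theorem~\ref{smoothnessthm}), a path $\rho_t$ tangent to $v$ that stays inside $\Rep(\Gamma, \SO(3,1))$.

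Next I invoke the local structure of $\SO(3,1)$-deformations of the peripheral representation. Fix generators $\gamma_1, \gamma_2$ of $\Delta_i$. By Thurston's theory, there is, transverse to the parabolic-preserving directions, a complex Dehn surgery parameter $u_i(t) \in \CC$ with $u_i(0) = 0$ such that $\rho_t(\gamma_1)$ is loxodromic with complex translation length $u_i(t)$, $\rho_t(\gamma_2)$ with complex translation length $\tau_i u_i(t) + O(u_i^2)$, and more generally $\rho_t(\gamma_1^a \gamma_2^b)$ with complex translation $w_{a,b}(t) := (a + b\tau_i) u_i(t)$ to leading order. Here $\tau_i \in \CC \setminus \RR$ is the cusp modulus, the Teichm\"uller parameter of the Euclidean cross-section torus, which lies in the upper half plane. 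The eigenvalues of $\rho_t(\gamma_1^a \gamma_2^b)$ in $\SO(3,1) \subset \SL_4 \RR$ are $\{e^{\pm \Re w_{a,b}}, e^{\pm i \Im w_{a,b}}\}$, all real precisely when $\Im w_{a,b} = 0$.

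I claim next that the hypothesis of non-trivial restriction of $[v]$ to $\langle \mu_i \rangle$ forces $u_i'(0) \neq 0$. The parabolic-preserving first-order deformations of $\rhyp|_{\Delta_i}$, i.e.\ those for which $\rho_t(\eta)$ remains parabolic to first order for every $\eta \in \Delta_i$, form a $2$-dimensional subspace of $H^1_{\rhyp}(\Delta_i, \so(3,1))$; since all non-trivial parabolics of $\SO(3,1)$ are conjugate, any such deformation restricts to a coboundary on $\langle \mu_i \rangle$, hence to zero in $H^1_{\rhyp}(\langle \mu_i \rangle, \so(3,1))$. The complementary opening-up direction, of complex dimension $1$, is precisely where $u_i'(0)$ lives; writing $\mu_i = \gamma_1^{a_0} \gamma_2^{b_0}$, the restriction map sends this direction to $H^1_{\rhyp}(\langle \mu_i \rangle, \so(3,1)) \cong \CC$ via multiplication by the non-zero factor $a_0 + b_0 \tau_i$, which is an isomorphism. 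Thus non-triviality of the restriction to $\langle \mu_i \rangle$ forces $u_i'(0) \neq 0$.

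Finally, given $u_i'(0) \neq 0$ we have $u_i(t) \neq 0$ for small $t \neq 0$, and one cannot simultaneously have $\Im w_{a,b}(t) = 0$ for both $(a, b) = (1, 0)$ and $(a, b) = (0, 1)$: the first forces $u_i(t) \in \RR$, the second $\tau_i u_i(t) \in \RR$, and together these force $\tau_i \in \RR$, contradicting $\tau_i \in \CC \setminus \RR$. Hence at least one of $\rho_t(\gamma_1)$ or $\rho_t(\gamma_2)$ has non-real eigenvalues for small $t \neq 0$, proving the lemma. The main subtle point is setting up the identification of the opening-up direction in $H^1_{\rhyp}(\Delta_i, \so(3,1))$ with Thurston's complex parameter $u_i$ cleanly; once that is done, the rest reduces to the elementary observation that for $\tau \notin \RR$, no non-zero $u \in \CC$ can satisfy both $u \in \RR$ and $\tau u \in \RR$.
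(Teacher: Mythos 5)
Your argument is correct inside $\mathrm{Rep}(\Gamma, \SO(3,1))$ and is, if anything, a cleaner way to express the Calabi--Weil/Thurston input than the paper's invocation. The paper's proof, too, first works with an $\SO(3,1)$-valued path and appeals to Calabi--Weil rigidity to see that a suitable peripheral element picks up a genuinely complex translation length. Where your proposal goes wrong is the reduction step: the lemma is a statement about \emph{every} path in $\Rep(\Gamma,\tilde G)$ tangent to \emph{every} cocycle representative of $[v]$, not just about one conveniently chosen $\SO(3,1)$-valued path.

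Your justification --- that two such paths ``differ to first order by a conjugation path, which preserves eigenvalues'' --- is exactly the fallacy the paper warns against in Remark~\ref{rem:mistake}. Two matrix paths can agree to first order and yet have completely different eigenvalue behavior for small $t \neq 0$; the example $M_1(t) = \bigl(\begin{smallmatrix} e^t & 1\\ 0 & e^{-t}\end{smallmatrix}\bigr)$ versus $M_2(t) = \bigl(\begin{smallmatrix} 1+t & 1\\ -t^2 & 1-t\end{smallmatrix}\bigr)$ in the paper shows the first has eigenvalues splitting to first order while the second is a conjugation path with constant eigenvalue $1$. Moreover, even for a \emph{fixed} cocycle $v$, there are many paths in $\Rep(\Gamma,\tilde G)$ tangent to $v$ that leave $\SO(3,1)$ and differ from your chosen $\SO(3,1)$-path at second order; your reduction does not address these at all. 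Thurston's Dehn surgery parameter $u_i$, on which your argument hinges, is an $\SO(3,1)\cong\PSL_2\CC$-specific coordinate and has no meaning for a generic path in $\Rep(\Gamma,\tilde G)$, so the conclusion you derive for the $\SO(3,1)$-path does not transfer.

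The paper's device for crossing this gap is to track a \emph{polynomial} function of matrix entries, namely the $\SL_4\RR$ trace of $\rho_t(\nu_i)$. Unlike eigenvalues, the trace and its derivatives are smooth functions on $\Rep(\Gamma,\tilde G)$, and the relevant low-order behavior of the trace is determined by the tangent cocycle $v$ alone. The punchline is then an algebraic fact: the parabolic $\rhyp(\nu_i)$ has $\SL_4\RR$-trace $4$, and any nearby element of $\SL_4\RR$ with real eigenvalues close to $1$ has trace $\geq 4$ by AM--GM; so whenever the trace drops below $4$, some eigenvalues must become complex. If you want to salvage your Thurston-parameter calculation, you should convert your conclusion ``$u_i'(0) \neq 0$ with a non-real direction'' into a statement about the trace of $\rho_t(\nu_i)$ --- which is path-independent at the appropriate order --- and then invoke the AM--GM inequality to conclude for arbitrary paths in $\Rep(\Gamma,\tilde G)$ tangent to $v$.
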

\begin{proof}
Let $\rho_t$ be any path of representations which is tangent to $v$. Let us first assume that the representations $\rho_t$ remain in $\SO(3,1)$. Then the restriction $\res_i(\rho_t)$ lies in $\SO_0(3,1) \cong \PSL_2 \CC$ and we may regard the image under $\rho_t$ of any individual element as either a $4\times4$ real matrix or a $2\times2$ complex matrix. It follows easily from Calabi--Weil rigidity~\cite{calabi,WeilI,WeilII} (or see Kapovich~\cite{kapbook}) that the $\PSL_2 \CC$ traces of $\res_i \rho_t$ become complex to first order. Further, there exists an element $\nu_i \in \Delta_i$ such that the derivative of the $\PSL_2 \CC$ trace of $\rho_t(\nu_i)$ has imaginary part larger than its real part. It follows that the $\SL_4 \RR$ trace of $\rho_t(\nu_i)$ has strictly negative derivative. This first order trace behavior holds for any path of representations $\rho_t$ into $\tilde G = \SL_4^{\pm} \RR$ that is tangent to~$v$.
Therefore, for any sufficiently small time $t > 0$, the trace of $\rho_t(\nu_i)$ is strictly smaller than four and so $\rho_t(\nu_i)$ has at least one pair of complex eigenvalues (by the arithmetic mean vs geometric mean inequality applied to the eigenvalues).
\end{proof}

For each $i \in \{1, \ldots, k\}$, let $\slice_i \subset \Rep(\Delta_i, \tilde G)$ be a copy of the four-dimensional partial slice $\slice$ described above with $\Delta = \Delta_i$ and let $g_i \in \tilde G$ be such that $c(g_i)\cdot \res_i(\rhyp) \in \slice_i$. Without loss of generality we assume $g_1 = 1$.
Let $S = \slice_1 \times \cdots \times \slice_k \subset \Rep(\Delta_1, \tilde G) \times \cdots \times \Rep(\Delta_k, \tilde G)$, let $V_{\slice_i} \subset H^1_{\rhyp}(\Delta_i, \g)$ denote the subspace of cohomology classes of all infinitesimal deformations tangent to $\slice_i$, and let $V_S = V_{\slice_1} \oplus \cdots \oplus V_{\slice_k}$.
We now prove:

\begin{lemma}\label{lem:transverse}
The augmented restriction map $\widetilde \res$ is transverse to $S$ at $(\rhyp, g_2, \ldots, g_k)$ with $k$--dimensional local intersection.
\end{lemma}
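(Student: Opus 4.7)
The plan is to reduce transversality to a cohomological statement, then split via $\g = \so(3,1) \oplus \mathfrak v$ and invoke Mostow--Prasad rigidity.

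First, a dimension count. By Theorem~\ref{thm:submanifold}, the image of $d\widetilde\res_p$ is $15k$-dim inside the $18k$-dim target tangent space, while $T_{\widetilde\res(p)}S$ has dimension $4k$; so if $\widetilde\res \pitchfork S$ at $p$, the local intersection has dimension $15k + 4k - 18k = k$, agreeing with the statement.

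Second, I would reduce the transversality check to cohomology. As established in the proof of Theorem~\ref{thm:submanifold}, the image of $d\widetilde\res_p$ already contains every coboundary in every factor of the target (the $\tilde G^{k-1}$ directions produce coboundaries in factors $i = 2,\ldots,k$, and restriction of coboundaries from $\Gamma$ onto $\Delta_1$ surjects onto $B^1_{\rhyp|_{\Delta_1}}(\Delta_1,\g)$). By Lemma~\ref{lem:slice}, each $T\slice_i$ meets the coboundaries trivially, so $T_pS$ projects injectively onto a $4k$-dim subspace $V_S = \bigoplus_i V_{\slice_i} \subset \bigoplus_i H^1_{\rhyp}(\Delta_i, \g)$. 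Transversality at $p$ then reduces to the cohomological equation
\[
V_S + \mathrm{image}(\res_*) = \bigoplus_{i=1}^k H^1_{\rhyp}(\Delta_i, \g).
\]

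Third, I would split via $\g = \so(3,1) \oplus \mathfrak v$ and dispose of the $\mathfrak v$-part. From the formula~\eqref{eqn:defnofslice} the $(x,y)$-directions remain in the unipotent subgroup of $\SO_0(3,1)$, so they live in $H^1_{\rhyp}(\Delta_i, \so(3,1))$ and span the $2$-dim cusp-shape locus there. The $(a,b)$-directions, by contrast, change the eigenvalues of the generators of $\Delta_i$ in two linearly independent ways (cf.\ the explicit eigenvalue list preceding Lemma~\ref{lem:slice}), so they project to linearly independent classes in the $2$-dim space $H^1_{\rhyp}(\Delta_i, \mathfrak v)$. Combined with $\dim V_{\slice_i} = 4$, this forces the projection $V_{\slice_i} \to H^1_{\rhyp}(\Delta_i, \mathfrak v)$ to be surjective, so $V_S$ alone already surjects onto the $\mathfrak v$-part of the target.

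The main obstacle is the $\so(3,1)$-component. Let $V_S^{\so} = \bigoplus_i \spa\{\partial_x \Phi, \partial_y \Phi\}$, a $2k$-dim subspace of the $4k$-dim $\bigoplus_i H^1_{\rhyp}(\Delta_i, \so(3,1))$ consisting of cusp-shape (parabolicity-preserving) first-order deformations. By ``half lives, half dies'', $\mathrm{image}(\res_{\so(3,1)})$ is also $2k$-dim. The key content is to show $\mathrm{image}(\res_{\so(3,1)}) \cap V_S^{\so} = 0$, which is the following instance of Mostow--Prasad (equivalently Calabi--Weil) rigidity for cusped hyperbolic $3$-manifolds: every infinitesimal $\SO(3,1)$-deformation of $\rhyp$ whose restriction to each $\Delta_i$ lies in the cusp-shape locus is trivial. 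Both subspaces being $2k$-dim in a $4k$-dim space, trivial intersection implies they span. Combined with the $\mathfrak v$-part, this establishes the cohomological transversality, and the intersection dimension $k$ then follows from the initial count.
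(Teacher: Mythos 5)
Your overall strategy — split $\g = \so(3,1)\oplus\mathfrak v$, show $V_S$ surjects onto the $\mathfrak v$-part, show the $\so(3,1)$-part is jointly covered by $V_S^{\so}$ and $\res_*(H^1(\Gamma,\so(3,1)))$ — is logically equivalent to what must be proved, and the cohomological reduction at the start is fine. But the key step, that the $(a,b)$-directions of $\Phi$ project to linearly independent classes in $H^1_{\rhyp}(\Delta_i,\mathfrak v)$, is justified by appealing to eigenvalue behavior, and this is precisely the mode of argument the paper flags as fallacious in Remark~\ref{rem:mistake}. The base point $\Phi(0,0,u,v)$ is parabolic, hence not diagonalizable, and at such a point the first-order behavior of eigenvalues along a path is \emph{not} a function of the tangent cocycle: the paper's own $2\times 2$ example ($M_1(t)$ versus $M_2(t)$) exhibits two paths with identical derivative whose eigenvalues behave completely differently. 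Indeed, immediately after listing the eigenvalues you cite, the paper explicitly states ``more is required to show that $\Phi$ is not tangent to the conjugation orbit.'' So ``the eigenvalues change in two independent ways'' cannot by itself yield any conclusion about the classes $[\partial_a\Phi],[\partial_b\Phi]$, in particular not that they have nonzero $\mathfrak v$-component. The conclusion happens to be true (one can check directly from the explicit matrices $x'_{a,b}, y'_{a,b}$ in Section~\ref{subsec:slice} that the $(a,b)$-derivatives at $(0,0)$ are $\mathfrak v$-valued, and then invoke Lemma~\ref{lem:slice} for independence), but the argument you give does not establish it; as written this is a genuine gap.

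The paper's proof avoids the $\mathfrak v$-surjectivity step entirely. It does not decompose $V_S$, but uses Lemma~\ref{lem:complex-evals} — whose proof is robust against the Remark~\ref{rem:mistake} issue because it works with first derivatives of \emph{traces}, which are linear in the tangent cocycle and hence well-defined even at a parabolic — to show directly that $V_S \cap \res_*(H^1_{\rhyp}(\Gamma,\so(3,1))) = 0$. Since $\dim V_S = 4k$ and $\dim \res_*(H^1(\Gamma,\so(3,1))) = 2k$ in the $6k$-dimensional target, trivial intersection forces a direct sum, hence spanning, and the $k$-dimensional intersection follows by count. Your step (b), that $V_S^{\so}\cap\res_*(H^1(\Gamma,\so(3,1)))=0$, is also really a consequence of Lemma~\ref{lem:complex-evals}; calling it Mostow--Prasad (Calabi--Weil) rigidity is reasonable in spirit, but it is weaker than the paper's statement and, without the $\mathfrak v$-surjectivity claim patched, does not suffice to finish the argument.
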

\begin{proof}
By Lemma~\ref{lem:complex-evals}, the intersection $V_S \cap \res_*(H^1_{\rhyp}(\Gamma, \so(3,1)))$ is trivial, since none of the representations in $S$ have complex eigenvalues. Hence, since $\dim V_S = 4k$ by Lemma~\ref{lem:slice}, and $H^1_{\rhyp}(\Gamma, \so(3,1))$ and its image in $\bigoplus_{i=1}^k H^1_{{\rhyp}}(\Delta_i, \so(3,1))$ have dimension $2k$, we have that $$\bigoplus_{i=1}^k H^1_{{\rhyp}}(\Delta_i, \g) = V_S \oplus \res_*(H^1_{\rhyp}(\Gamma, \so(3,1))$$
and it follows that the subspaces $V_S$ and $\res_*(H^1_{\rhyp}(\Gamma, \g))$ intersect transversely in a $k$--dimensional subspace. Therefore the $4k$--dimensional tangent space to $S$ in $\bigoplus_{i=1}^k Z^1_{c(g_i){\rhyp}}(\Delta_i, \g)$ intersects the codimension $3k$ image of the augmented restriction map transversely in a $k$--dimensional subspace. The result follows.
\end{proof}

Finally we prove Theorem~\ref{thm:diagonalizable}.
\begin{proof}[Proof of Theorem~\ref{thm:diagonalizable}]
Lemma~\ref{lem:nontrivial-def} guarantees the existence of a cohomology class of infinitesimal deformation $[z] \in H^1_{\rhyp}(\Gamma, \mathfrak v)$ whose restrictions $(\res_i)_* [z]$ are, for each $i \in \{1, \ldots, k\}$, non-trivial in $H^1_{{\rhyp}}(\Delta_i, \mathfrak v)$. The span of $[z]$ and $H^1_{\rhyp}(\Gamma, \so(3,1))$ is a $(2k +1)$--dimensional subspace of $H^1_{\rhyp}(\Gamma, \g)$ whose restriction, also $(2k+1)$--dimensional by the assumption that $\res_*$ is injective, must intersect the codimension $2k$ subspace $V_S$ non-trivially (and indeed, transversally). Let $\res_* [u] = \alpha\res_* [z] + \res_* [w]$ be a non-trivial element of the intersection, where $[w] \in H^1_{\rhyp}(\Gamma, \so(3,1))$. Since $V_S \cap \res_*(H^1_{\rhyp}(\Gamma, \so(3,1))) = 0$, we must have $\alpha \neq 0$. In particular, for each $i \in \{1, \ldots, k\}$, $(\res_i)_* [u]$ does not lie in $H^1_{\rhyp}(\Delta_i, \so(3,1))$. Hence $(\res_i)_\ast [u] \in V_{\slice_i}$ is not tangent to the cusp shape locus. We may therefore find a representative cocycle $u \in Z^1_{\rhyp}(\Gamma, \g)$ and coboundaries $b_i \in B^1_{\rhyp}(\Delta_i, \g)$ for $i \in \{2, \ldots, k\}$ such that $\res_{1*}u$ is tangent to $\slice_1$ and for each $i \in \{2, \ldots, k\}$, $\res_{i*}u + b_i$ is tangent to $\slice_i$. By Lemma~\ref{lem:transverse} there exists a path $\rho_t \in \Rep(\Gamma, \tilde G)$ based at $\rho_0 = \rhyp$ with tangent $u$ at $t= 0$ and paths $g_{1,t} = 1$ (constant), $g_{2, t}, \ldots, g_{k,t}$ in $\tilde G$ with $g_{2,0} = g_2,\ldots,g_{k,0} = g_k$ such that for each $i \in \{1,\ldots,k\}$, $c(g_{i,t})\cdot \res_i \rho_t \in \Rep(\Delta_i, \tilde G)$ lies in $\slice_i$. Since $(\res_i)_\ast[u]$ is not tangent to the cusp shape locus, for sufficiently small $t  > 0$, each of the representations $c(g_{i,t}) \cdot \res_i \rho_t$ does not lie in the cusp-shape locus and is therefore diagonalizable by Lemma~\ref{lem:slice}. The theorem is proved.
\end{proof}
%
%%%%%%%%%%
%%
%%
%%%%%%%%%%
\begin{remark}
The properties characterizing the infinitesimal deformation $[z]$ from Lemma~\ref{lem:nontrivial-def} are stable under perturbation. Therefore there is an open $k$--dimensional cone of $[z] \in H^1_{\rhyp}(\Gamma, \mathfrak v)$ satisfying the conclusion of Lemma \ref{lem:nontrivial-def}.  The proof of Theorem~\ref{thm:diagonalizable} implies that this cone parametrizes a $k$--dimensional family of representations satisfying the conclusion of theorem.
\end{remark}

\subsection{More on the four-dimensional partial slice $\slice$}\label{subsec:slice}

In this section we give the proof of Lemma~\ref{lem:slice}, which describes the essential properties of the four-dimensional partial slice $\slice = \operatorname{Im}(\Phi)$ used in the transversality argument for Theorem~\ref{thm:diagonalizable}. Along the way, we will give a geometric description of the representations in $\slice$ and indicate some of the intuition behind its construction.

Let $C$ denote the two-dimensional abelian subgroup of $\SO(3,1)$ consisting of unipotent matrices fixing a point $p_\infty$ on the ideal boundary of hyperbolic space. Let us work in the paraboloid model of $\HH^3$, with the ideal boundary $\partial_\infty \HH^3$ described by the paraboloid $$\partial_\infty \HH^3 = \{[(x_1^2 + x_2^2)/2, x_1,x_2,1] \in \RP^n \mid x_1,x_2 \in \RR\} \cup [1,0,0,0],$$ and let us take $p_\infty = [1,0,0,0]$. Then each of the cusp-shape representations $\Phi(0,0,x,y)$, where $y \neq 0$, is a lattice inside $C$. 
Note that $C$ is contained in its centralizer $Z(C)$ in $\SL_4 \RR$, a three-dimensional abelian subgroup, maximal with respect to inclusion. Let $\mathfrak a$ denote the Cartan subalgebra of $\ssl_4(\RR)$ consisting of traceless diagonal matrices. Let $A = \exp \mathfrak a$ denote the corresponding Cartan subgroup. In order to find representations nearby $\res (\rhyp)$ which are diagonalizable, we must study the space of maximal (\ie three-dimensional) abelian subgroups of $\SL_4 \RR$ near $Z(C)$ and attempt to locate (at least some of) those that are conjugates of~$A$.

We now construct a smooth two-dimensional family of three-dimensional abelian subgroups $A_{a,b}$, which are conjugate to $A$ for all $(a,b) \neq (0,0)$, and such that $A_{0,0} = Z(C)$. We work in the affine chart with coordinates $[x_3, x_1, x_2, 1]$. For each $t >0$, consider the intersection, $S_t$, of the paraboloid $\partial_\infty \HH^3$ with the affine plane $P_t$ parallel to the $x_1 x_2$--plane at height $x_3 = 1/(2t^{2})$. In these coordinates, $S_t$ is a round circle contained in $P_t$, invariant under the rotation $R(\theta)$ by any angle $\theta$ in the $x_1 x_2$--plane about the $x_3$--axis. Let $p(t) = [1/(2t^2), 1/t ,0,1]$ and let $p_1(t,\theta) = R(\theta) p(t)$, let $p_2(t, \theta) = R(\theta+2\pi/3) p(t)$ and $p_3(t, \theta) = R(\theta+4\pi/3) p(t)$. Then $p_1(t,\theta), p_2(t,\theta), p_3(t,\theta)$ are the vertices of an equilateral triangle inscribed in $S_t$. 

\begin{figure}[ht!]
{
\centering

\def\svgwidth{4.8in}
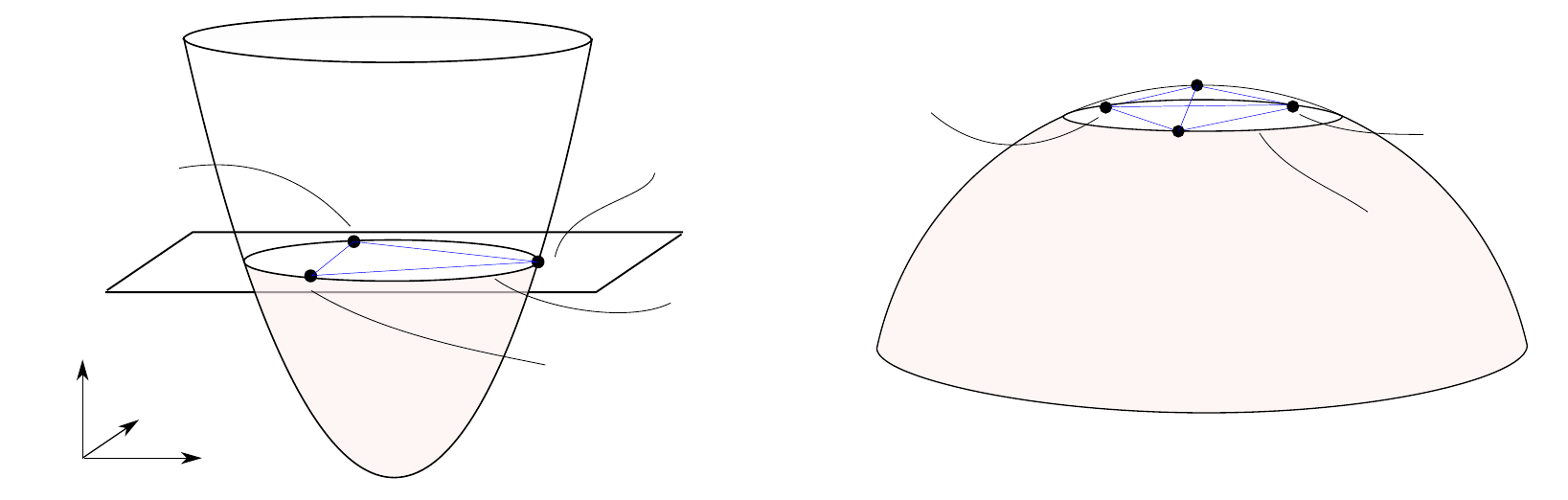

}
\caption[]{\emph{Left panel:} The points $p_1(t,\theta), p_2(t, \theta)$ and $p_3(t,\theta)$ form an equilateral triangle on the circle $S_t$ at height $1/(2t^2)$ on $\partial \HH^3$ in the paraboloid model. \emph{Right panel:} The same picture shown in the affine chart in which $\partial \HH^3$ is the round sphere. \label{paraboloid}}
\end{figure}

Let $A_{t,\theta}$ denote the subgroup of projective transformations which fix the vertices $p_1(t,\theta)$, $p_2(t,\theta)$, $p_3(t,\theta)$, $p_\infty$. Then $A_{t,\theta}$ is given by the explicit formula $A_{t, \theta} = R(\theta) Q(t) AQ(t)^{-
1}R(\theta)^{-1}$, where
$$Q(t) = \footnotesize \begin{pmatrix} 1 & 1 & 1 & 1\\ 2t & -t & -t & 0 \\ 0 & \sqrt{3} t & -\sqrt{3}t & 0\\ 2t^2 & 2t^2 & 2t^2 & 0 \end{pmatrix}, \ \ R(\theta) =  \footnotesize \begin{pmatrix} 1 & 0 & 0 & 0\\ 0 & \cos \theta & -\sin \theta & 0 \\ 0 & \sin \theta & \cos \theta & 0\\ 0 & 0 & 0 & 1 \end{pmatrix}$$
 Note that $A_{t, \theta} = A_{t, \theta + 2\pi/3}$.
The Hausdorff limit as $t \to 0$ of $A_{t,\theta}$ is exactly $Z(C)$, independent of $\theta$. Indeed, $(t,\theta)$ are branched polar coordinates for a smooth two-dimensional family of three-dimensional abelian subgroups of $\SL_4 \RR$.
To see this, consider the three families of traceless diagonal matrices $x_{t,\theta}, y_{t,\theta}, z_{t, \theta} \in \mathfrak a = \operatorname{Lie}(A)$:
\begin{align*} 
x_{t,\theta} &= \footnotesize \begin{pmatrix} 2t \cos\theta & & &\\ & 2t \cos(\theta + 2 \pi/3) & &\\ & & 2t \cos(\theta + 4 \pi/3) \\ & & & 0 \end{pmatrix},\\
y_{t,\theta} &= \footnotesize \begin{pmatrix} 2t \sin\theta & & &\\ & 2t \sin(\theta + 2 \pi/3) & &\\ & & 2t \sin(\theta + 4 \pi/3) \\ & & & 0 \end{pmatrix},\\
z_{t,\theta} &= \footnotesize \begin{pmatrix} t^2 & & &\\ & t^2 & &\\ & & t^2 \\ & & & -3t^2 \end{pmatrix}.
\end{align*}
For all $t \neq 0$ and any $\theta$, these three vectors form a basis of $\mathfrak a$. A simple computation shows that
\begin{align*}
x'_{t,\theta} := R(\theta) Q(t) x_{t,\theta} Q(t)^{-1}R(\theta)^{-1} &=
\footnotesize \begin{pmatrix} 0 & 1 & 0 & 0 \\ 0  & t \cos 3\theta & t \sin 3 \theta & 1\\ & t \sin 3 \theta & -t \cos 3 \theta& 0 \\ & 2t^2 & & 0 \end{pmatrix}\\
%y'_{t,\theta} := R(\theta) Q(t) y_{t,\theta}Q(t)^{-1}R(\theta)^{-1} &=
%\footnotesize \begin{pmatrix} 0 & 0 & 1 & 0 \\ 0  & t \sin 3\theta & -t \cos 3 \theta & 0 \\ & -t \cos 3 \theta & -t \sin 3 \theta& 1 \\ & 0 & 2t^2 & 0 \end{pmatrix}.\\
%z'_{t,\theta} := R(\theta) Q(t) z_{t,\theta}Q(t)^{-1}R(\theta)^{-1} &= \footnotesize \begin{pmatrix} -3t^2 & & & 2\\ & t^2 & &\\ & & t^2 \\ & & & t^2 \end{pmatrix}
\end{align*}

\begin{align*}
%x'_{t,\theta} := R(\theta) Q(t) x_{t,\theta} Q(t)^{-1}R(\theta)^{-1} &=
%\footnotesize \begin{pmatrix} 0 & 1 & 0 & 0 \\ 0  & t \cos 3\theta & t \sin 3 \theta & 1\\ & t \sin 3 \theta & -t \cos 3 \theta& 0 \\ & 2t^2 & & 0 \end{pmatrix}\\
y'_{t,\theta} := R(\theta) Q(t) y_{t,\theta}Q(t)^{-1}R(\theta)^{-1} &=
\footnotesize \begin{pmatrix} 0 & 0 & 1 & 0 \\ 0  & t \sin 3\theta & -t \cos 3 \theta & 0 \\ & -t \cos 3 \theta & -t \sin 3 \theta& 1 \\ & 0 & 2t^2 & 0 \end{pmatrix}.\\
z'_{t,\theta} := R(\theta) Q(t) z_{t,\theta}Q(t)^{-1}R(\theta)^{-1} &= \footnotesize \begin{pmatrix} -3t^2 & & & 2\\ & t^2 & &\\ & & t^2 \\ & & & t^2 \end{pmatrix}
\end{align*}
These three vectors form a basis for $\frak a_{t,\theta} = \operatorname{Lie}(A_{t, \theta})$.
We now set $a = t \cos 3 \theta$ and $b = t \sin 3\theta$ and rewrite in these coordinates:
\begin{align*}
x'_{a,b} &=
\footnotesize \begin{pmatrix} 0 & 1 & 0 & 0 \\ 0  & a & b & 1\\ 0 & b &  -a & 0 \\ 0 & 2(a^2 + b^2)& 0 & 0 \end{pmatrix}, \ 
 &y'_{a,b} =
\footnotesize \begin{pmatrix} 0 & 0 & 1 & 0 \\ 0  & b & -a & 0\\ 0 & -a & -b & 1 \\ 0 & 0 & 2(a^2+b^2) & 0 \end{pmatrix},\\
z'_{a,b} &= \footnotesize \begin{pmatrix} -3(a^2 + b^2) & & & 2\\ & a^2 + b^2 & &\\ & & a^2 + b^2\\ & & & a^2+b^2 \end{pmatrix}.
\end{align*}
In these coordinates it is transparent that the Lie algebra elements $x'_{a,b},y'_{a,b}, z'_{a,b}$ span a smooth (in fact algebraic) family of three-dimensional abelian subalgebras $\mathfrak a_{a,b} = \mathfrak a_{t,\theta}$ with $\mathfrak a_{0,0} = \operatorname{Lie}(Z(C))$. More relevant for Lemma~\ref{lem:slice}, for each $a,b \in \RR$, the Lie algebra elements $x'_{a,b}$ and $y'_{a,b}$ span a two-dimensional Lie subalgebra $\mathfrak c_{a,b}$ which generates a two-dimensional abelian subgroup $C_{a,b}$ of $\SL_4 \RR$. Indeed both $\mathfrak c_{a,b}$ and $C_{a,b}$ are smooth families and, of course, $C_{0,0} = C$.
\begin{proposition}\label{prop:algebras-transverse}
The maps $(a,b) \mapsto \mathfrak a_{a,b}$ and $(a,b) \mapsto \mathfrak c_{a,b}$ are both transverse to conjugation at $(a,b) = (0,0)$.
\end{proposition}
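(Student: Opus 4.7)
The plan is to identify the tangent space at $V$ (resp.\ $V'$) in the appropriate Grassmannian of subspaces of $\mathfrak{g}$ with $\Hom(V, \mathfrak{g}/V)$ (resp.\ $\Hom(V', \mathfrak{g}/V')$), so that the tangent space to the conjugation orbit at $V$ is the image of the adjoint map $X \in \mathfrak{g} \mapsto (Y \mapsto [X,Y] \bmod V)$. Transversality to conjugation then amounts to showing that the differential of the family $(a,b) \mapsto \mathfrak{c}_{a,b}$ (resp.\ $\mathfrak{a}_{a,b}$) at $(0,0)$ has image intersecting this adjoint image only in zero.

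First I would differentiate the explicit expressions for the basis matrices at $(a,b)=(0,0)$ to obtain
$$\partial_a x'|_0 = E_{22} - E_{33}, \quad \partial_b x'|_0 = E_{23} + E_{32}, \quad \partial_a y'|_0 = -(E_{23} + E_{32}), \quad \partial_b y'|_0 = E_{22} - E_{33},$$
while $\partial_a z'|_0 = \partial_b z'|_0 = 0$ since $z'_{a,b}$ depends on $(a,b)$ only through $a^2 + b^2$. The key observation is that every generator of $V = \spa(E_{12}+E_{24},\, E_{13}+E_{34})$, and hence also of $V' = V + \RR E_{14}$, vanishes at the four positions $(2,2),(2,3),(3,2),(3,3)$; therefore cosets modulo $V$ (or $V'$) are faithfully recorded at these entries.

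To conclude, suppose $\alpha \partial_a + \beta \partial_b$ matches the adjoint action of some $X \in \mathfrak{g}$ modulo $V$ on each basis vector. Since $[X, E_{12}]$ is supported only in row $1$ and column $2$, and $[X, E_{24}]$ only in row $2$ and column $4$, the $(3,3)$ entry of $[X, x'_0] = [X, E_{12}] + [X, E_{24}]$ is zero; equating with the target $(3,3)$ entry $-\alpha$ forces $\alpha = 0$. A symmetric computation shows $[X, y'_0]_{2,2} = 0$, forcing $\beta = 0$. The same argument handles both $\mathfrak{c}_{a,b}$ and $\mathfrak{a}_{a,b}$, since the additional constraint $[X, z'_0] \in V'$ required in the three-dimensional case only restricts $X$, not $(\alpha, \beta)$. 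The only real difficulty is keeping the entrywise bookkeeping straight; the structural input required is very light, and the heart of the matter is that the derivative directions $E_{22}-E_{33}$ and $E_{23}+E_{32}$ live in exactly the part of $\mathfrak{g}$ that commutators with $V$ cannot reach.
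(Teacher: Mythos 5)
Your proof is correct, and it follows a cleaner route than the paper's. The paper embeds the Grassmannian in $\mathbb P\bigl(\wedge^2 \ssl_4 \RR\bigr)$ via Pl\"ucker coordinates, computes $\partial_a(x'\wedge y')$, $\partial_b(x'\wedge y')$, and the infinitesimal conjugation $\ad(v)x'_0\wedge y'_0 + x'_0\wedge\ad(v)y'_0$, and then isolates the bivector coefficients of $e_{13}\wedge e_{33}$ and $e_{12}\wedge e_{22}$ to force $\alpha=\beta=0$. You instead use the intrinsic description $T_V\mathrm{Gr}\cong \Hom(V,\g/V)$ and read off the matrix entries $(3,3)$ for $x'_0$ and $(2,2)$ for $y'_0$, which by the support computation of $[X,E_{ij}]$ cannot be hit by the adjoint map and are not seen by the coset modulo $V$ or $V'$; this avoids the wedge-algebra bookkeeping entirely and is, in my view, easier to verify. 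One further small difference: the paper handles $\mathfrak a_{a,b}$ by asserting it reduces to the $\mathfrak c_{a,b}$ case because $\mathfrak c_{a,b}\subset\mathfrak a_{a,b}$, a reduction whose justification is not spelled out; you instead dispatch $\mathfrak a_{a,b}$ directly by observing that $\partial_a z'|_0 = \partial_b z'|_0 = 0$, so the extra generator contributes only a condition on $X$ and not on $(\alpha,\beta)$, and that the larger subspace $V'=V+\RR E_{14}$ still vanishes at the four entries used. This is more self-contained. (Minor nit: your closing remark that the derivative directions ``live in exactly the part of $\g$ that commutators with $V$ cannot reach'' overstates the situation --- commutators with $x'_0$ certainly can be nonzero at $(2,2)$, for instance --- but the actual argument uses only the two specific entries, which is what matters.)
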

\begin{proof}
Since $\mathfrak c_{a,b} \subset \mathfrak a_{a,b}$ it is enough to show that the map $(a,b) \mapsto \mathfrak c_{a,b}$ is transverse to conjugation. To do so, we must simply show that the two-parameter family of projective classes $[x'_{a,b} \wedge y'_{a,b}]$ of bivectors in $\mathbb P \left(\wedge^2 \ssl_4\RR\right)$ is never tangent to the conjugation orbit at $[x'_{0,0} \wedge y'_{0,0}]$. This is a straightforward calculation. First, compute:
\begin{align*}
 \left.\partial_a ( x'_{a,b} \wedge y'_{a,b})\right|_{(a,b)=(0,0)} &= \footnotesize \begin{pmatrix} 0 &  & &  \\  & 1 & 0 & \\ & 0 &  -1 &  \\ & & & 0 \end{pmatrix} \wedge \begin{pmatrix} 0 & 0 & 1 & 0 \\   & 0 & 0 & 0\\ & 0 & 0 & 1 \\ &  &  & 0 \end{pmatrix}\\
 +&\footnotesize \begin{pmatrix} 0 & 1 & 0 & 0 \\ 0  & 0 & 0 & 1\\ & 0 &  0 & 0 \\ & & & 0 \end{pmatrix}\wedge \begin{pmatrix} 0 & 0 & 0 & 0 \\   & 0 & -1 & 0\\ & -1 & 0 & 0 \\ &  &  & 0 \end{pmatrix} \\
% \left.\partial_b  (x'_{a,b} \wedge y'_{a,b})\right|_{(a,b)=(0,0)} &= \footnotesize \begin{pmatrix} 0 &  & &  \\  & 0 & 1 & \\ & 1 &  0 &  \\ & & & 0 \end{pmatrix} \wedge \begin{pmatrix} 0 & 0 & 1 & 0 \\   & 0 & 0 & 0\\ & 0 & 0 & 1 \\ &  &  & 0 \end{pmatrix}\\
% &+\footnotesize \begin{pmatrix} 0 & 1 & 0 & 0 \\ 0  & 0 & 0 & 1\\ & 0 &  0 & 0 \\ & & & 0 \end{pmatrix}\wedge \begin{pmatrix} 0 & 0 & 0 & 0 \\   & 1 & 0 & 0\\ & 0 & -1 & 0 \\ &  &  & 0 \end{pmatrix} \\
\end{align*}
\begin{align*}
% \left.\partial_a ( x'_{a,b} \wedge y'_{a,b})\right|_{(a,b)=(0,0)} &= \footnotesize \begin{pmatrix} 0 &  & &  \\  & 1 & 0 & \\ & 0 &  -1 &  \\ & & & 0 \end{pmatrix} \wedge \begin{pmatrix} 0 & 0 & 1 & 0 \\   & 0 & 0 & 0\\ & 0 & 0 & 1 \\ &  &  & 0 \end{pmatrix}\\
% +&\footnotesize \begin{pmatrix} 0 & 1 & 0 & 0 \\ 0  & 0 & 0 & 1\\ & 0 &  0 & 0 \\ & & & 0 \end{pmatrix}\wedge \begin{pmatrix} 0 & 0 & 0 & 0 \\   & 0 & -1 & 0\\ & -1 & 0 & 0 \\ &  &  & 0 \end{pmatrix} \\
 \left.\partial_b  (x'_{a,b} \wedge y'_{a,b})\right|_{(a,b)=(0,0)} &= \footnotesize \begin{pmatrix} 0 &  & &  \\  & 0 & 1 & \\ & 1 &  0 &  \\ & & & 0 \end{pmatrix} \wedge \begin{pmatrix} 0 & 0 & 1 & 0 \\   & 0 & 0 & 0\\ & 0 & 0 & 1 \\ &  &  & 0 \end{pmatrix}\\
 &+\footnotesize \begin{pmatrix} 0 & 1 & 0 & 0 \\ 0  & 0 & 0 & 1\\ & 0 &  0 & 0 \\ & & & 0 \end{pmatrix}\wedge \begin{pmatrix} 0 & 0 & 0 & 0 \\   & 1 & 0 & 0\\ & 0 & -1 & 0 \\ &  &  & 0 \end{pmatrix} \\
\end{align*}
Next, the infinitesimal action by conjugation of an arbitrary element $v = (v_{ij}) \in \mathfrak g$ on $x'_{0,0} \wedge y'_{0,0}$ is given by:
\begin{align*}
ad(v)x_{0,0}' \wedge y_{0,0}' +& x_{0,0}' \wedge ad(v) y_{0,0}'=\\ &\footnotesize \begin{pmatrix} -v_{21} & v_{11} - v_{22} & -v_{23} & v_{12} - v_{24} \\ -v_{41} & v_{21}-v_{42}& -v_{43} & v_{22} - v_{44} \\ 0 & v_{31} & 0 & v_{32} \\ 0 & v_{41} & 0 & v_{42} \end{pmatrix} \wedge \footnotesize  \begin{pmatrix} 0 & 0 & 1 & 0 \\   & 0 & 0 & 0\\ & 0 & 0 & 1 \\ &  &  & 0 \end{pmatrix} \\ + & \footnotesize \begin{pmatrix} 0 & 1 & 0 & 0 \\ 0  & 0 & 0 & 1\\ & 0 &  0 & 0 \\ & & & 0 \end{pmatrix} \wedge \begin{pmatrix} -v_{31} & - v_{32} & v_{11} - v_{33} & v_{13} - v_{34} \\ 0 & 0 & v_{21} & v_{23} \\ -v_{41} & - v_{42} & v_{31}-v_{43} & v_{33}-v_{44} \\ 0 & 0 & v_{41} & v_{43} \end{pmatrix}.
\end{align*}

Now, let $(e_{ij})$ be the usual basis for $\gl_4 \RR$, thought of as the space of $4 \times 4$ real matrices, and work in the basis for $\bigwedge^2 \mathfrak{gl}_4\RR \supset \bigwedge^2 \ssl_4\RR$ consisting of all $e_{ij} \wedge e_{mn}$ such that either $i < m$ or $i = m$ and $j < n$.
Suppose that some tangent vector to the family $[x'_{a,b} \wedge y'_{a,b}]$ is tangent to the conjugation orbit at $[x'_{0,0} \wedge y'_{0,0}]$. This is equivalent to the equation:
 $$\alpha \partial_a(x'_{a,b} \wedge y'_{a,b}) + \beta \partial_b (x'_{a,b} \wedge y'_{a,b}) = ad(v)x'_{0,0} \wedge y'_{0,0} + x'_{0,0} \wedge ad(v) y'_{0,0} + \gamma x'_{0,0} \wedge y'_{0,0}.$$
The coefficient of $e_{13}\wedge e_{33}$ on the left-hand side is $\alpha$ while the same coefficient on the right-hand side is zero. The coefficient of $e_{12}\wedge e_{22}$ on the left-hand side is $\beta$ while the same coefficient on the right-hand side is again zero. It follows that $\alpha = \beta = 0$: any vector tangent to both the family $[x'_{a,b} \wedge y'_{a,b}]$ and the conjugation orbit is trivial. The Proposition is proved.
\end{proof}

\begin{proof}[Proof of Lemma~\ref{lem:slice}]
First, let us rewrite $\Phi$ in the notation of this section:
\begin{align*}
\Phi(a,b,x,y)(\gamma_1) &= \exp(x'_{a,b})\\
\Phi(a,b,x,y)(\gamma_2) &= \exp(x x'_{a,b} + y y'_{a,b})
\end{align*}

That the representations $\Phi(a,b,x,y)$ are diagonalizable with real eigenvalues whenever $(a,b) \neq (0,0)$ has already been demonstrated. We must show that $\Phi$ is not tangent to the conjugation orbit in any direction at any point $(0,0,x,y)$.

Consider the tangent vector $$w = \dot a \partial_a \Phi + \dot b \partial_b \Phi + \dot x \partial_x \Phi + \dot y \partial_y \Phi$$
at the point $\Phi(0,0,x,y)$ and suppose that $w\in B^1_{\Phi({0,0,x,y})}(\Delta, \ssl_4 \RR)$. By Proposition~\ref{prop:algebras-transverse}, the two-dimensional subgroup $C_{a,b}$ generated by $\Phi(a,b,x,y)$ is changing to first order in the direction of $w$ if and only if $(\dot a, \dot b) \neq (0,0)$. So we must have $(\dot a,\dot b) = (0,0)$. Hence $w$ is tangent to the cusp shape locus. Since $w$ is a coboundary, the cusp shape must not change to first order. It follows that $\dot x = 0$ and $\dot y = 0$. This completes the proof of Lemma~\ref{lem:slice}.
\end{proof}

 \begin{remark}\label{rem:mistake}
For fixed $(a_0, b_0) \neq (0,0)$, the eigenvalues of $\Phi(t a_0, t b_0, u,v)$ are smooth functions of $t$ with non-zero derivative at $t = 0$. It is tempting to conclude that $\mathrm{d} \Phi_{(0,0,u,v)}(a_0, b_0,0,0)$ is not an infinitesimal conjugation. Although this turns out to be the case, it is fallacious to conclude so purely from the given behavior of eigenvalues; the above proof of Lemma~\ref{lem:slice} is needed. We give an example in the simpler setting of $2 \times 2$ matrices. Consider two paths of matrices $M_1(t) = \begin{pmatrix} e^t & 1 \\ 0 & e^{-t} \end{pmatrix}$ and $M_2(t) = \begin{pmatrix} 1+t & 1\\ -t^2 & 1-t \end{pmatrix}.$ Note that $M_1(0) = M_2(0)$ and that $M_1'(0) = M_2'(0)$. However, the eigenvalues of $M_1(t)$, which are $e^t = 1 + t + O(t^2)$ and $e^{-t} = 1 - t + O(t^2)$, vary to first order in $t$, while the eigenvalues of $M_2(t)$ are both constant equal to one for all $t$. Indeed $M_2(t)$ is a conjugation path.
\end{remark}

%%%%%%%%%%%%%%%%%%%%%
%
%
%
%%%%%%%%%%%%%%%%%%%%%

\section{Geometry of manifolds with totally geodesic boundary}\label{boundarygeom}

This section is dedicated to the geometry of convex projective three-manifolds with diagonalizable peripheral holonomy. We will complete the proof of Theorem~\ref{thm:deform}, the algebraic side of which was given in the previous section. Recall that we denote the Lie groups $\PGL_4\RR$, $\SL^{\pm}_4\RR$ and $\SL_4\RR$ by $G$, $\tilde G$ and $\tilde G_0$, respectively.

Let $M$ be an open three-manifold which is the interior of a compact boundary incompressible manifold with $k$ torus boundary components $\partial M = \partial_1 \sqcup \cdots \sqcup \partial_k$. Let $dev: \widetilde M \xrightarrow[]{\simeq} \Omega \subset \RP^3$ be the developing map of an indecomposable properly convex projective structure on $M$ and denote the holonomy representation $\rho \co \Gamma \to \tilde G$ where $\Gamma = \pi_1 M$ denotes the fundamental group. As usual denote the peripheral subgroups by $\Delta_i = \pi_1 \partial_i$. We will assume that
$\rho(\Delta_i)$ is diagonalizable over the reals with eigenvectors $p^{(i)}_1, p^{(i)}_2, p^{(i)}_3, p^{(i)}_4 \in \RP^3$ and further that $p^{(i)}_4$ is never an attracting fixed point (not even weakly attracting) for any non-trivial element of $\rho(\Delta_i)$. Then $\rho$ is said to satisfy the \emph{middle eigenvalue condition}, namely that for any nontrivial $\gamma \in \Delta_i$, the eigenvalue of $\rho(\gamma)$ associated to eigenvector $p^{(i)}_4$ is never the largest nor the smallest. The middle eigenvalue condition is a slight weakening of the uniform middle eigenvalue condition defined by Choi (see \cite{ChoiI} for more details).

The holonomy representation $\rho$ for this structure is also the holonomy representation for other (related) convex projective structures defined by different domains. Let us begin by constructing a minimal convex domain for the action of $\rho(\Gamma)$. Any strongly attracting fixed point of an element of $\rho(\Gamma)$ must lie on $\partial \Omega$. The closure of the set of strongly attracting fixed points is called the \emph{limit set} of $\rho(\Gamma)$ and we define $\Omega_{min}$ to be the interior of the closed convex hull of the limit set. Then $\Omega_{min}$ is contained in $\Omega$, is non-empty, open, $\rho$--invariant, convex, and is minimal with respect to these properties. Pulling back $\Omega_{min}$ via $dev$ determines a convex projective structure on a sub-manifold $M_{min} \cong \rho(\Gamma) \backslash \Omega_{min}$ whose inclusion into $M$ is a homotopy equivalence. We now investigate the geometry of $M_{
min}$, in particular of its ends.

\begin{lemma}\label{lem:attractors}
For any $i \in \{1, \ldots, k\}$, each of the three fixed points $p^{(i)}_1, p^{(i)}_2, p^{(i)}_3$ is a strongly attracting fixed point for some element of $\rho(\Delta_i)$. Therefore $p^{(i)}_1, p^{(i)}_2, p^{(i)}_3$ lie on $\partial \Omega$ and $\partial \Omega_{min}$.
\end{lemma}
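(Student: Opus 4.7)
The plan is to translate the claim into a statement about linear functionals on the abelian lattice $\Delta_i \cong \ZZ^2$. Simultaneously diagonalize $\rho(\Delta_i)$, and let $\ell_j(\gamma) = \log|\lambda_j(\gamma)|$ denote the log-absolute-value of the eigencharacter at $p^{(i)}_j$. The $\ell_j$ are four linear functionals on $\Delta_i \otimes \RR \cong \RR^2$ with $\sum_j \ell_j \equiv 0$, and the point $p^{(i)}_j$ is strongly attracting for $\rho(\gamma)$ precisely when $\gamma$ lies in the open cone $C_j := \{v : \ell_j(v) > \ell_m(v)\ \forall m \neq j\} \subset \RR^2$. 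Since $\Delta_i$ is a full-rank lattice and each $C_j$ is open, the lemma reduces to showing $C_j \neq \emptyset$ in $\RR^2$ for $j = 1, 2, 3$.

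Two facts organize the proof. First, the map $L = (\ell_1, \ldots, \ell_4) : \Delta_i \to \RR^4$ is injective, since any element in its kernel has all eigenvalues in $\{\pm 1\}$, a finite subgroup which must be trivial in the torsion-free $\ZZ^2$; equivalently, the $\ell_j$'s jointly span the dual space $(\Delta_i \otimes \RR)^* \cong \RR^2$ (rank two). Second, the middle eigenvalue condition $\ell_4(\gamma) < \max_{m \neq 4}\ell_m(\gamma)$ for nontrivial $\gamma$ extends by density of the lattice and openness of the strict-inequality region to $\ell_4(v) \leq \max_{m \neq 4}\ell_m(v)$ for all $v \in \RR^2$, which dually says $\ell_4 \in \operatorname{conv}(\ell_1, \ell_2, \ell_3)$ in $(\RR^2)^*$. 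The main obstacle is to rule out the degenerate possibility that two of the $\ell_j$'s coincide as functionals, which would trivially collapse some $C_j$. These coincidences are excluded by combining the three constraints: for example, $\ell_4 = \ell_1$ together with $\sum \ell_j = 0$ forces $\ell_4 = \ell_1 = -(\ell_2 + \ell_3)/2$, and if $\ell_2, \ell_3$ are linearly independent, then on the nonempty open cone $\{v : \ell_2(v) < 0,\ \ell_3(v) < 0\}$ one computes $\ell_4(v) = \ell_1(v) = \max_m \ell_m(v)$, contradicting middle eigenvalue at integer points of that cone; while if $\ell_2, \ell_3$ are dependent, then all four $\ell_j$'s collapse to a one-dimensional subspace, contradicting rank two. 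Analogous sum-zero computations exclude every other coincidence $\ell_j = \ell_k$.

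With all four $\ell_j$ distinct, the convex hull $\operatorname{conv}(\ell_1, \ldots, \ell_4)$ equals the nondegenerate triangle $\operatorname{conv}(\ell_1, \ell_2, \ell_3)$, so each $\ell_j$ with $j \in \{1, 2, 3\}$ is a vertex and hence an extreme point of the 4-point convex hull; by the hyperplane separation principle, $C_j \neq \emptyset$, and any integer point in $C_j$ provides a nontrivial $\gamma \in \Delta_i$ with $p^{(i)}_j$ strongly attracting. For the second assertion, given such $\gamma$, the orbit $\rho(\gamma)^n x$ of any $x \in \Omega$ converges projectively to $p^{(i)}_j$, placing $p^{(i)}_j \in \overline{\Omega}$; since $\rho(\gamma)$ has infinite order and fixes $p^{(i)}_j$, proper discontinuity of the $\Gamma$-action on $\Omega$ precludes $p^{(i)}_j \in \Omega$, giving $p^{(i)}_j \in \partial \Omega$. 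The same attracting fixed point lies in the limit set of $\rho(\Gamma)$, hence in $\overline{\Omega_{min}}$, and the same proper discontinuity argument applied to the $\Gamma$-action on $\Omega_{min}$ gives $p^{(i)}_j \in \partial \Omega_{min}$.
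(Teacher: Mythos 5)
Your route — recasting the lemma as a convex-geometry statement about the four log-eigenvalue functionals $\ell_j$ in the dual plane $(\Delta_i \otimes \RR)^\ast \cong (\RR^2)^\ast$ and showing that each of $\ell_1,\ell_2,\ell_3$ is an extreme point — is genuinely different from the paper's. The paper instead considers the projection $\varpi_{123}$ from the Cartan subalgebra $\mathfrak a^{(i)}$ of $\SL^{\pm}_4\RR$ to the Cartan subalgebra of $\SL^{\pm}(\mathrm{span}\{p_1^{(i)},p_2^{(i)},p_3^{(i)}\})$, notes that the middle eigenvalue condition forces $\varpi_{123}$ to be injective on the two-dimensional subalgebra $\mathfrak c^{(i)}$ containing $\log\rho(\Delta_i)$, and concludes by dimension count that it is surjective; your version is workable but more case-heavy. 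However, it contains one genuine gap.

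The gap is in the first paragraph: ``the map $L=(\ell_1,\ldots,\ell_4):\Delta_i\to\RR^4$ is injective \ldots; equivalently, the $\ell_j$'s jointly span $(\Delta_i\otimes\RR)^\ast$ (rank two).'' These two assertions are not equivalent. Injectivity of a linear map $\RR^2\to\RR^4$ on the \emph{integer lattice} does not imply rank two; for instance $(m,n)\mapsto(m+\sqrt2\,n,\,0,\,0,\,0)$ is injective on $\ZZ^2$ but has rank one. Your injectivity argument (all eigenvalues $\pm1$ forces the identity in a torsion-free group) only uses faithfulness of $\rho|_{\Delta_i}$, and that is not enough. The rank-two claim is load-bearing: it is what rules out the degenerate case where all four $\ell_j$ are proportional, what makes the triangle $\mathrm{conv}(\ell_1,\ell_2,\ell_3)$ nondegenerate, and what makes the cones $C_j$ nonempty. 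The correct justification invokes \emph{discreteness} of $\rho(\Delta_i)$ (which follows from proper discontinuity of $\Gamma$ on $\Omega$): the log-of-absolute-values homomorphism sends $\rho(\Delta_i)\cong\ZZ^2$ to a discrete, rank-two subgroup of $\RR^3$, and a discrete rank-two free abelian subgroup of a real vector space spans a two-dimensional subspace. This is exactly the content of the paper's opening observation that $\rho(\Delta_i)$ is a lattice in a two-dimensional subgroup $C^{(i)}=\exp\mathfrak c^{(i)}$ of the Cartan. Once rank two is established correctly, the rest of your argument goes through; I would only ask that you spell out at least one of the coincidence cases with $j,k\in\{1,2,3\}$ (say $\ell_1=\ell_2$), since the witnessing cone there is of a slightly different shape (e.g.\ $\{\ell_4>\ell_3>0\}$) than in the $\ell_4=\ell_1$ case you treated.
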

\begin{proof}
The peripheral holonomy group $\rho(\Delta_i)$ is obtained by exponentiating a lattice, $\Lambda$, inside a two-dimensional Lie subalgebra, $\mathfrak c^{(i)}$, of the Cartan subalgebra $\mathfrak a^{(i)}\subset \mathfrak{g}$ corresponding to the basis $p^{(i)}_1, p^{(i)}_2, p^{(i)}_3, p^{(i)}_4 \in \RP^3$. 
Let $A^{(i)}_{123} = \exp \mathfrak a^{(i)}_{123}$ be the (2--dimensional) subgroup of $\SL^\pm(\operatorname{span} \{p^{(i)}_1, p^{(i)}_2, p^{(i)}_3\}) \cong \SL^\pm_3 \RR$ of elements that fix each of $p^{(i)}_1, p^{(i)}_2, p^{(i)}_3$. Consider the natural projection $\varpi_{123} \co \mathfrak a^{(i)} \to \mathfrak a^{(i)}_{123}$. 
It follows from the middle eigenvalue condition that the restriction of $ \varpi_{123}$ to $\mathfrak c^{(i)}$ is injective. To see this, observe that if $A\in \ker(\varpi_{123})\cap \mathfrak{c}^{(i)}$ then the eigenvalues of $A$ corresponding to the eigenvectors $p_1^{(i)}$, $p_2^{(i)}$, $p_3^{(i)}$ and $p_4^{(i)}$ are $\lambda$, $\lambda$, $\lambda$ and $\mu$, respectively. As a result we see that $\mu$ is either the smallest or largest eigenvalue of $A$ which contradicts the middle eigenvalue condition. Thus by dimensional considerations we conclude that $\varpi_{123}(\mathfrak c^{(i)}) = \mathfrak a^{(i)}_{123}$. Furthermore, if $A\in \mathfrak{c}^{(i)}$, $1\leq j\leq 3$, and the $p_j^{(i)}$--eigenvalue of $\varpi_{123}(A)$ is the largest eigenvalue then the $p_j^{(i)}$--eigenvalue of $A$ is also the largest eigenvalue for $A$. 

Let $1\leq j\leq 3$ and let $D_j$ be the subset of $\mathfrak{a}^{(i)}_{123}$ consisting of elements where the $p^{(i)}_j$--eigenvalue is the largest. It is easy to see that $D_j$ is a non-empty open cone, which implies that $D_j$ has non-trivial intersection with $\varpi_{123}(\Lambda)$. As a result we can find an element of $\rho(\Delta_i)$ such that the $p^{(i)}_j$--eigenvalue is the largest. Such an element has $p^{(i)}_j$ as an attracting fixed point, which completes the proof of the lemma.
\end{proof}

We make the following definition, following Goldman~\cite{Go} in the two-dimensional setting.
\begin{definition}\label{def:principal}
For each $i$, there is a unique (open) triangle $T^{(i)} \subset \overline \Omega_{min}$ spanned by the points $p^{(i)}_1, p^{(i)}_2, p^{(i)}_3$. Any $\rho(\Gamma)$ translate of $T^{(i)}$ will be called a \emph{principal totally geodesic triangle}. The group $\rho(\Delta_i)$ acts properly on $T^{(i)}$ and the quotient is called a \emph{principal totally geodesic torus}.
\end{definition}

We will show that $M_{min}$ admits a natural compactification whose boundary consists of principal totally geodesic tori.
First, we introduce a third convex domain $\Omega_{max}$ defined as follows. Let $\Omega^*$ denote the convex domain dual to $\Omega$. Then $\Gamma$ acts properly discontinuously  on $\Omega^*$ via~$\rho$ with diagonalizable peripheral holonomy, and we may perform the same construction as above: Let $(\Omega^*)_{min}$ denote the interior of the closed convex hull of the limit set for the $\rho(\Gamma)$ action on $\RP^{3\ast}$. We define $\Omega_{max}$ to be the convex domain dual to $(\Omega^*)_{min}$. It is the maximal properly convex, $\rho$--invariant domain because its dual is minimal.
Next, observe that the fixed points of the dual action of $\rho(\Delta_i)$ are the hyperplanes $P_1^{(i)}, P_2^{(i)}, P_3^{(i)},P_4^{(i)}$ spanned respectively by $\{p^{(i)}_2, p^{(i)}_3, p^{(i)}_4\}$, $\{p^{(i)}_1, p^{(i)}_3, p^{(i)}_4\}$, $\{p^{(i)}_1, p^{(i)}_2, p^{(i)}_4\}$ and $\{p^{(i)}_1,p^{(i)}_2,p^{(i)}_3\}$. The hyperplanes $P^{(i)}_1,P^{(i)}_2$ and $P^{(i)}_3$ are the attracting fixed points of $\rho(\Delta_i)$ in $\RP^{3\ast}$. Hence $P_1^{(i)}, P_2^{(i)}, P_3^{(i)}$ are points on the boundary of any convex domain in $\RP^{3\ast}$ preserved by $\rho$, including on $\partial(\Omega^*)_{min}$. Dually, they are three support hyperplanes for any convex domain in $\RP^3$ preserved by $\rho$, in particular for $\Omega$, $\Omega_{min}$ and $\Omega_{max}$. They bound a convex (but not properly convex) open triangular prism $U^{(i)}$ which is separated by $T^{(i)}$ into two components $\mathcal T^{(i)}_+, \mathcal T^{(i)}_-$, each of which is an open 
tetrahedron.

\begin{lemma}\label{lem:convexdomain}\
\begin{enumerate}
\item Each principal totally geodesic triangle is contained in  $\partial\Omega_{min}$.
\item $\Omega_{min}$ is the interior of the intersection of the positive half-spaces bounded by the planes $\rho(\Gamma)P^{(i)}_4$ containing principal totally geodesic triangles.
\item $\Omega_{max} \setminus \overline \Omega_{min}$ is the disjoint union over all $i \in \{1, \ldots, k\}$ and all $\gamma \in \Gamma / \Delta_i$, of open tetrahedra $\rho(\gamma)\mathcal T^{(i)}_-$, where $\mathcal T^{(i)}_-$ is the tetrahedron lying on the opposite side from $\Omega_{min}$ of the principal triangle $T^{(i)}$.
\end{enumerate}
\end{lemma}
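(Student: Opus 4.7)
I would prove all three parts together, with the central step being to establish that each hyperplane $P^{(i)}_4$ (and more generally $\rho(\gamma)P^{(i)}_4$) is a supporting hyperplane of $\Omega_{min}$ along the principal triangle $T^{(i)}$. The resulting picture is that $\overline{\Omega_{min}}$ is a convex body whose only flat facets are the $\rho(\Gamma)$-translates of the principal triangles, and $\overline{\Omega_{max}}$ is recovered from $\overline{\Omega_{min}}$ by attaching the closed tetrahedra $\overline{\rho(\gamma)\mathcal{T}^{(i)}_-}$ across these triangles.

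For (1), the inclusion $T^{(i)} \subseteq \overline{\Omega_{min}}$ is immediate: the vertices $p^{(i)}_j$ $(j=1,2,3)$ lie in the limit set by Lemma \ref{lem:attractors}, and $\overline{\Omega_{min}}$ is convex. The content is that $\mathrm{int}(T^{(i)}) \cap \Omega_{min} = \emptyset$. I would argue this by contradiction. If $\Omega_{min}$ contains an interior point of $T^{(i)}$, then by openness it meets both $\mathcal{T}^{(i)}_+$ and $\mathcal{T}^{(i)}_-$. Under the middle eigenvalue condition, $\rho(\Delta_i)$-orbits of points in either tetrahedron accumulate only on $T^{(i)}$ and never on $p^{(i)}_4$. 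Combining $\rho(\Delta_i)$-invariance, convexity, and this orbit accumulation, one can show that $\Omega_{min}$ must contain the entire open triangular prism $U^{(i)}$. However, $\overline{U^{(i)}}$ contains the point $p^{(i)}_4$ and cannot be separated from $U^{(i)}$ by any hyperplane, so $\overline{U^{(i)}}$ is not contained in any affine chart, contradicting the proper convexity of $\Omega_{min}$.

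For (2), let $H^+_{i,\gamma}$ denote the open half-space bounded by $\rho(\gamma)P^{(i)}_4$ containing $\Omega_{min}$ (well-defined by (1)), and set $\Omega' := \mathrm{int}\bigcap_{i,\gamma}\overline{H^+_{i,\gamma}}$. Part (1) gives $\Omega_{min} \subseteq \Omega'$. The reverse inclusion reduces to the claim that the triangle hyperplanes $\rho(\Gamma)P^{(i)}_4$ are dense in the set of supporting hyperplanes of $\Omega_{min}$, so that their half-spaces already witness the convex body $\overline{\Omega_{min}}$. This density follows from the minimality of the $\rho(\Gamma)$-action on its limit set: a supporting hyperplane at any boundary point of $\Omega_{min}$ is a limit of $\rho(\Gamma)$-translates of the triangle hyperplanes.

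For (3), apply the dual of (1) to the dual domain $(\Omega^*)_{min} = \Omega_{max}^*$: the dual of $p^{(i)}_4$ is a supporting hyperplane of $(\Omega^*)_{min}$, whence $\rho(\gamma)p^{(i)}_4 \in \overline{\Omega_{max}}$. Combined with $\rho(\gamma)p^{(i)}_j \in \overline{\Omega_{max}}$ for $j=1,2,3$, convexity gives $\overline{\rho(\gamma)\mathcal{T}^{(i)}_-} \subseteq \overline{\Omega_{max}}$, and the open tetrahedron $\rho(\gamma)\mathcal{T}^{(i)}_-$ lies in $\Omega_{max}$; by (2), it lies on the negative side of $\rho(\gamma)P^{(i)}_4$ and is therefore disjoint from $\overline{\Omega_{min}}$. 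Conversely, any $x \in \Omega_{max}\setminus\overline{\Omega_{min}}$ lies on the negative side of some $\rho(\gamma)P^{(i)}_4$ by (2); combined with the supporting hyperplanes $\rho(\gamma)P^{(i)}_j$ ($j=1,2,3$) of $\Omega_{max}$, this confines $x$ to the specific tetrahedron $\rho(\gamma)\mathcal{T}^{(i)}_-$. Distinct tetrahedra are pairwise disjoint because they are separated by distinct triangle hyperplanes.

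The main obstacle is the contradiction argument in part (1), namely upgrading ``$\Omega_{min}$ straddles $P^{(i)}_4$'' to ``$\Omega_{min} \supseteq U^{(i)}$''. On each side of $T^{(i)}$ one must carefully analyze the convex hulls of $\rho(\Delta_i)$-orbits, using the middle eigenvalue condition to obtain accumulation exactly on $T^{(i)}$, and then combine the two symmetric pictures via convexity across $T^{(i)}$ to fill in the non-properly-convex open prism and trigger the contradiction. A secondary difficulty is the density claim underpinning part (2), which in our setting follows from the standard minimality properties of the limit set of $\rho(\Gamma)$.
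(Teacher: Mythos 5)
Your argument for part (1) tries to avoid topological input and derive the result purely from $\rho(\Delta_i)$-invariance, convexity, and the middle eigenvalue condition. That is genuinely different from what the paper does, and unfortunately the key step does not hold. You claim that if $\Omega_{min}$ contains an interior point $q$ of $T^{(i)}$, then ``combining $\rho(\Delta_i)$-invariance, convexity, and orbit accumulation'' one gets $\Omega_{min} \supseteq U^{(i)}$. This is false. Diagonalize $\rho(\Delta_i)$ and normalize coordinates by the $p^{(i)}_4$-eigenline; the $\rho(\Delta_i)$-orbit of a small ball around $q$ lies in a bounded slab $\{|x_4| \le \epsilon\}$ in these normalized coordinates, and (by an AM--GM/Cauchy--Schwarz estimate on the coordinates coming from the orbit of a lattice of diagonal matrices) its convex hull is a lens-shaped neighborhood of $T^{(i)}$ that stays a bounded distance from $p^{(i)}_4$ and certainly does not exhaust $U^{(i)}$. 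The rank-one analogue in $\RP^2$ already shows this: the convex hull of the orbit of a ball around an interior point of the invariant segment by $\mathrm{diag}(\lambda,1,\lambda^{-1})$ is a bounded lens, not the whole lune. So the attempted contradiction with proper convexity never materializes. Likewise, the density claim underpinning your part (2) --- that every supporting hyperplane of $\Omega_{min}$ is a limit of translated triangle hyperplanes, via ``minimality of the action on the limit set'' --- is not established and is delicate here, since the limit set of a non-strictly-convex group has a stratified structure and the action on it need not be minimal.

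What the paper actually uses to get (1) is topological, and this ingredient cannot be dispensed with. One chooses embedded tori $S^{(i)} \subset M$ parallel to the boundary components and lifts them to $\Delta_i$-invariant, $(\Gamma,\Delta_i)$-\emph{precisely invariant} surfaces $D^{(i)} \subset \Omega = \widetilde M$. Lemma~\ref{lem:attractors} gives $\partial D^{(i)} = \partial T^{(i)}$, and precise invariance of the component of $\Omega \setminus D^{(i)}$ covering the cusp forces the limit set of $\rho(\Gamma)$ to lie in one of the two closed tetrahedra, say $\overline{\mathcal T^{(i)}_+}$. Consequently $\Omega_{min}$ lies entirely on one side of $P^{(i)}_4$, which is therefore a supporting hyperplane, and $\overline{T^{(i)}} \subset \partial\Omega_{min}$ follows by convexity since its vertices already lie in $\partial\Omega_{min}$. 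Part (2) is then obtained not by a density-of-hyperplanes argument but by showing that the intersection $\Omega' = \operatorname{int}\bigcap_{\gamma,i}\rho(\gamma)\overline{\Omega\cap\mathcal T^{(i)}_+}$ coincides with $\Omega_{min}$ via a covering-space argument (using proper discontinuity of the action on $\Omega_{max}$) and observing that $\Omega'$ is independent of the ambient $\Omega$. Your approach to part (3) is essentially the paper's (duality gives $p^{(i)}_4 \in \partial\Omega_{max}$, hence $\mathcal T^{(i)}_- \subset \Omega_{max}$), so that part is fine once (1) and (2) are established.
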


\begin{figure}[ht!]
{
\centering

\def\svgwidth{3.2in}
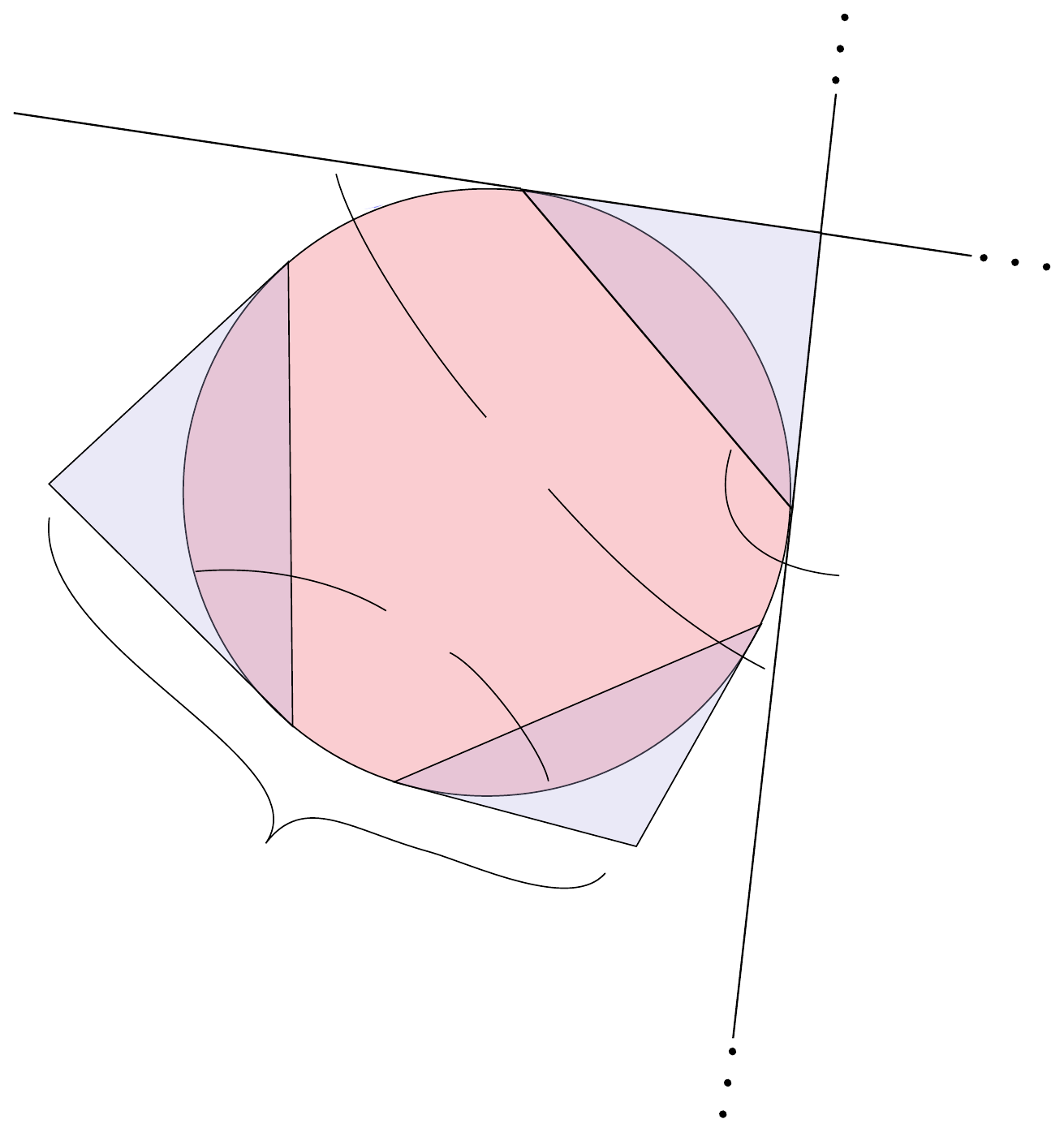

}
  \caption{\label{fig:twotets} A schematic of the configuration $\Omega_{min}\subset \Omega\subset \Omega_{max}$ in Lemma \ref{lem:convexdomain}. Here $\Omega_{min}$ is red, $\Omega$ is the union of red and purple, and $\Omega_{max}$ is the union of red, purple and blue.}
\end{figure}

\begin{proof}
Let $S^{(1)}, \ldots, S^{(k)}$ be a pairwise disjoint collection of embedded tori in $M$ which are parallel to the $k$ boundary components $\partial_1, \ldots, \partial_k$ respectively. 
 Let $D^{(i)}$ denote the lift to the universal cover $\widetilde M = \Omega$ of $S^{(i)}$ 
that is invariant under $\Delta_i$. Then $D^{(i)}$ is $(\Gamma,\Delta_i)$ \emph{precisely invariant}, meaning that $\rho(\gamma) D^{(i)} \cap D^{(i)} \neq \emptyset$ if and only if $\gamma \in \Delta_i$.
It follows from Lemma~\ref{lem:attractors} that $\partial D^{(i)} = \partial T^{(i)}$ and therefore $T^{(i)}$ is also $(\Gamma, \Delta_i)$ precisely invariant. Next, $D^{(i)}$ divides $\Omega$ into two components and one of these, the one which covers the end of $M$ bounded by $S^{(i)}$, is also $(\Gamma, \Delta_i)$ precisely invariant. 
 As a result, the limit set of $\rho(\Gamma)$ lies on one side of $\partial D^{(i)} = \partial T^{(i)}$ on $\partial \Omega$. Hence the limit set lies entirely in one of the two closed tetrahedra $\overline{\mathcal T^{(i)}_
+}$ or $\overline{\mathcal T^{(i)}_-}$ and we take the labeling convention that it is $ \overline{\mathcal T^{(i)}_
+} $. Let $\Omega^{(i)}_+ = \Omega \cap \mathcal T^{(i)}_+$. It then follows that
the intersection
$$\bigcap_{\gamma \in \Gamma} \bigcap_{i=1}^k \rho(\gamma)\overline{\Omega^{(i)}_+}$$
is a closed convex set containing the limit set of $\rho(\Gamma)$. We denote its interior, which must be non-empty, by~$\Omega'$. In fact, we will show soon that $\Omega'$ is the minimal convex domain $\Omega_{min}$. Until then we conclude the simple fact that $T^{(i)}$ is contained in $\partial \Omega_{min}$ because the vertices of $T^{(i)}$ must be contained in the boundary of any invariant convex domain and because $T^{(i)}$ lies in a support plane for $\partial \Omega'$.

Since the hyperplane $P^{(i)}_4$ containing $T^{(i)}$ is a support plane for $\Omega_{min}$, it is also a point in the boundary of the dual convex domain $(\Omega_{min})^* = (\Omega^*)_{max}$.
By applying the above to the dual convex domain $\Omega^*$ in place of $\Omega$, we see therefore that the point $p^{(i)}_4$ belongs to the boundary of $\Omega_{max}$. In particular, the entire tetrahedron $\mathcal T^{(i)}_-$ is contained in $\Omega_{max}$; see Figure~\ref{fig:twotets}. Since the action of $\rho(\Gamma)$ is properly discontinuous on $\Omega_{max}$ (or on any invariant open properly convex domain), we may now conclude that $\rho(\Gamma)$ acts properly on the subdomain $\Omega'_{\partial}$ consisting of the union of $\Omega'$ with all of the principal totally geodesic (open) triangles, and similarly on the union $\Omega_{min, \partial}$ of $\Omega_{min}$ with all of the principal totally geodesic triangles. The quotient of either set by $\rho(\Gamma)$ is a submanifold with boundary of $M_{max}:=\rho(\Gamma) \backslash \Omega_{max}$. The boundary of either is the collection of principal totally geodesic tori $\{ \rho(\Delta_i) \backslash T^{(i)}\}_{i=1}^k$. We conclude that $\Omega_{min} = \Omega'$. 
It
follows that the construction of $\Omega'$ is independent of the domain $\Omega$. The proof of (2) is thus completed by applying the above argument in the case that $\Omega$ is the interior of the intersection of the positive half-spaces bounded by the hyperplanes containing principal triangles. The third statement of the Lemma then follows immediately.
\end{proof}

\begin{figure}[ht!]
    \centering
    \subfloat[]{\includegraphics[scale=.3]{deform01_t}}
    \quad\;
    \subfloat[]{\includegraphics[scale=.3]{deform02_t}}

    \subfloat[]{\includegraphics[scale=.3]{deform03_t}}
    \quad\;
    \subfloat[]{\includegraphics[scale=.3]{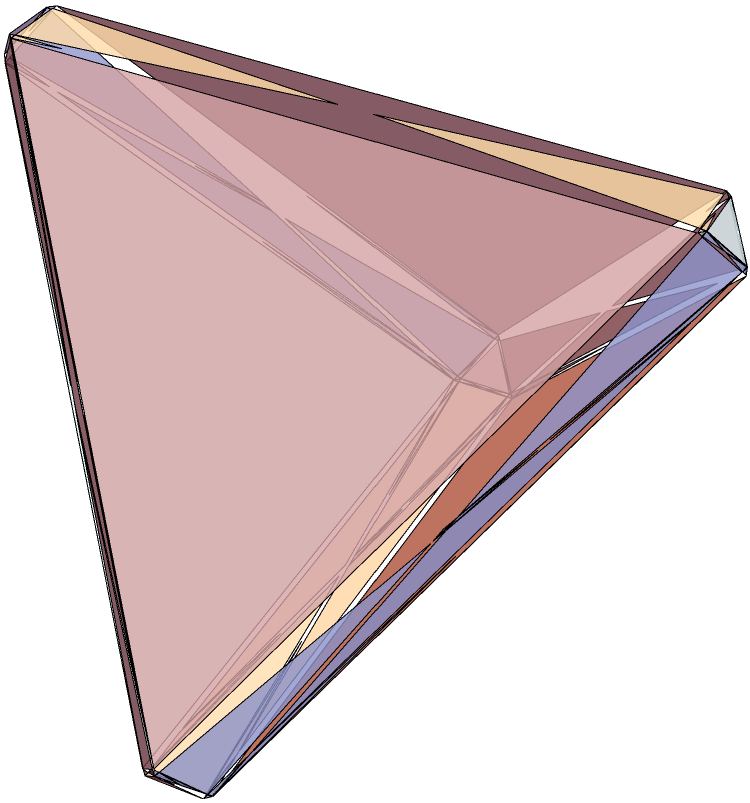}}
    \caption{The principal triangles on the boundary of $\Omega_{min}$ as the representation $\rho$ deforms away from the holonomy representation of the complete hyperbolic structure.}
    \label{fig:omegamin}
\end{figure}

\begin{definition}
The manifold $\overline M_{min} = \rho(\Gamma) \backslash \Omega_{min, \partial}$ is called a \emph{properly convex projective manifold with totally geodesic torus boundary}. By abuse, $M_{min} = \rho(\Gamma) \backslash \Omega_{min}$ will also be said to have totally geodesic boundary; see Figure \ref{fig:omegamin}.
\end{definition}

\begin{definition}\label{def:max}
The manifold $M_{max} = \rho(\Gamma) \backslash \Omega_{max}$ is the \emph{maximal thickening} of $M$. The tetrahedron $\mathcal T^{(i)}_-$ (or any of its orbits) is a \emph{principal tetrahedron} and its quotient by $\rho(\Delta_i)$ is a \emph{principal collar} of $M_{max}$; see Figure~\ref{fig:omegamax}.
\end{definition}

\begin{figure}[ht!]
    \centering
    \subfloat[]{\includegraphics[scale=.3]{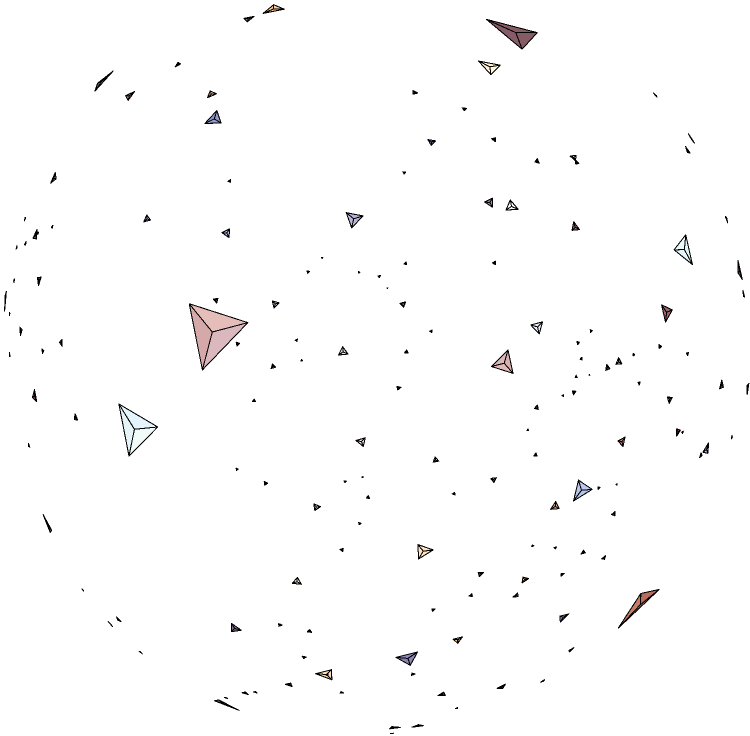}}
    \quad\;
    \subfloat[]{\includegraphics[scale=.3]{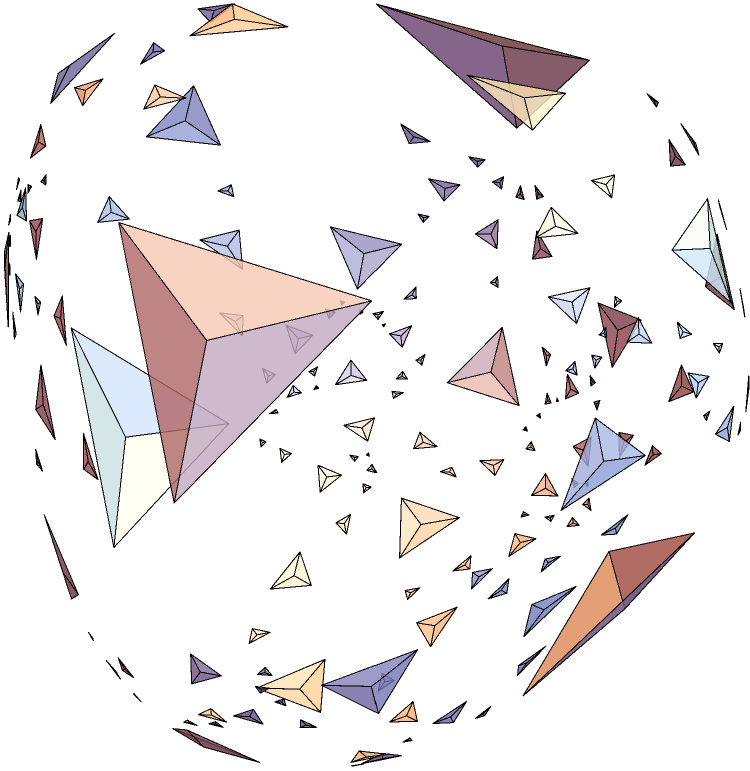}}

    \subfloat[]{\includegraphics[scale=.3]{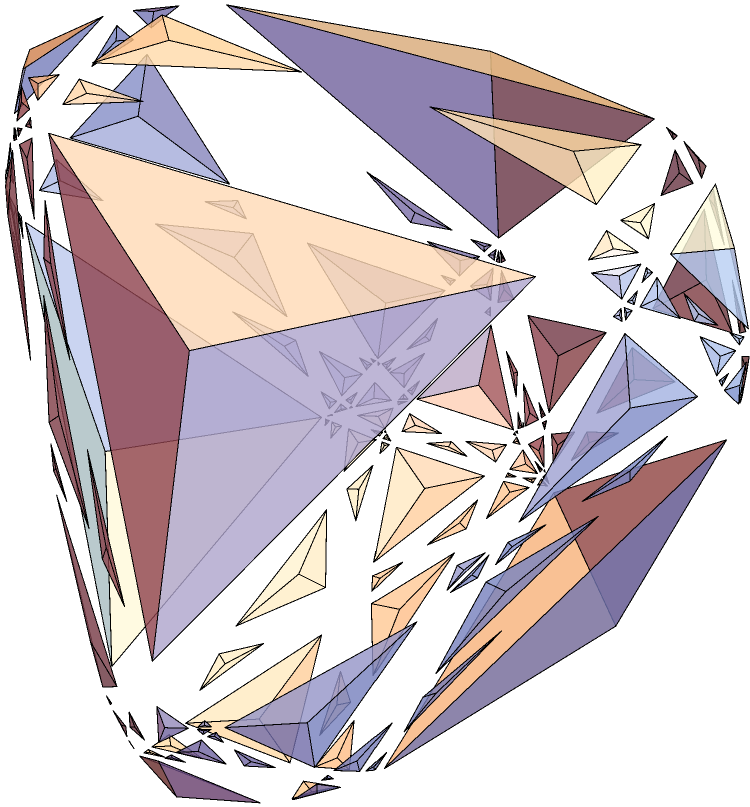}}
    \quad\;
    \subfloat[]{\includegraphics[scale=.3]{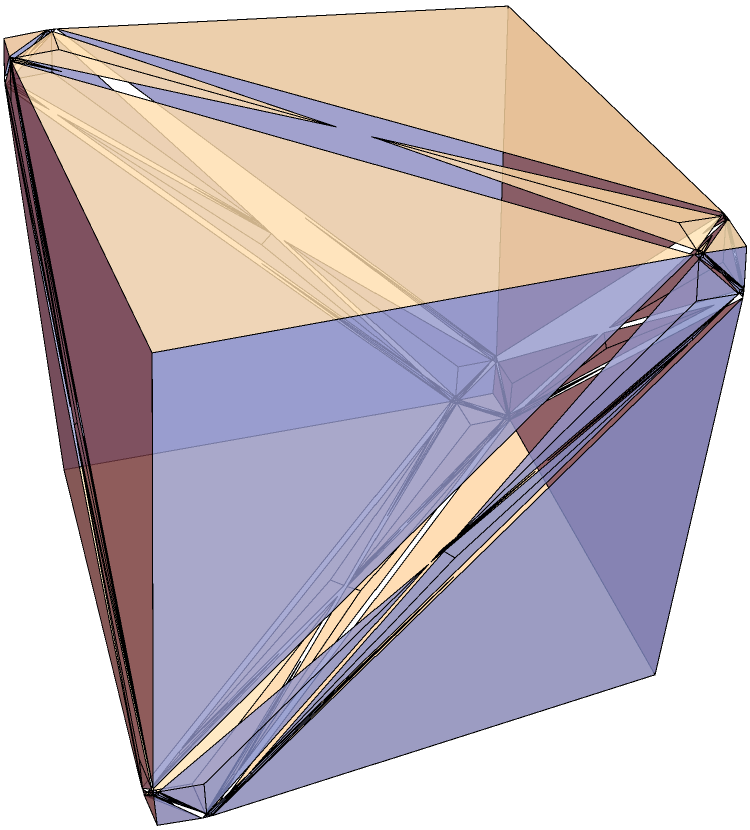}}

    \caption{The principal tetrahedra of $\Omega_{max}$ as the representation $\rho$ deforms away from the holonomy representation of the complete hyperbolic structure.}
    \label{fig:omegamax}
\end{figure}

\begin{proof}[Proof of Theorem \ref{thm:deform}]
 By assumption, the complete hyperbolic structure on $M$ is infinitesimally rigid rel $\partial M$.  By Theorem \ref{thm:diagonalizable} there is a path $\rho_t$ of representations through $\rho_0 = \rhyp$ such that $\rho_t(\Delta_i)$ is diagonalizable over the reals for all $t\neq 0$ and $i \in \{1, \ldots, k\}$. The holonomy principle of Cooper--Long--Tillmann (Theorem~\ref{thm:koszul}) or Choi~\cite[Corollary 1.1]{Choi2} guarantees that there is some $\varepsilon > 0$, such that for $t \in (0, \varepsilon)$, the representation $\rho_t$ is the holonomy representation of a properly convex projective structure, nearby the complete hyperbolic structure. Further, the ends of this structure have generalized cusps, although we will not use this fact here.

 By construction, $\res_i(\rho_t)$ belongs to $\slice_i$. Hence for $t > 0$, $\rho_t(\Delta_i)$ is a lattice in a two-dimensional diagonalizable subgroup conjugate to $C_{a, b}$, as defined in Section~\ref{subsec:slice}, where $(a,b) = (a^{(i)}_t, b^{(i)}_t) \neq (0,0)$ depend on~$i$ and continuously on~$t$. Since all elements of $C_{a,b}$ have a common fixed point with eigenvalue one, the middle eigenvalue condition is satisfied for $\rho_t(\Delta_i)$ for all $t > 0$ and $i \in \{1, \ldots, k\}$.
 For each $t$, the minimal convex sub-manifold $M_{min,t} \subset M$ is a properly convex projective manifold with totally geodesic boundary. Further, the inclusion $M_{min,t} \hookrightarrow M$ is a homotopy equivalence which is isotopic to a homeomorphism. Such an isotopy may be constructed explicitly in the universal cover by flowing the points lying in a principal tetrahedron of $\widetilde M$ (realized as the convex domain $\Omega_t$) radially toward the principal triangle bounding that tetrahedron.
\end{proof}

 \begin{remark} In the context of the proof of Theorem~\ref{thm:deform}, we note that for each end, the continuous deformation of the principal totally geodesic triangles, which open up from the parabolic fixed point of the hyperbolic structure, may be observed explicitly in terms of the parameters $(a,b)$ of the last paragraph of the proof. Indeed, the eigenvectors of $C_{a,b}$ were described explicitly in Section~\ref{subsec:slice}.
 \end{remark}

We close this subsection with one more lemma which will be needed in the next section. The region $U^{(i)}= \mathcal T^{(i)}_+ \cup T^{(i)} \cup \mathcal T^{(i)}_-$ is a triangular prism. Its boundary $\partial U^{(i)}$ is the union of three totally geodesic bigons.

\begin{lemma}\label{lem:proper-contain}
The intersection $\partial \Omega_{max} \cap \partial U^{(i)}$ is equal to $\partial \mathcal T^{(i)}_- \setminus T^{(i)}$. In particular, $\partial \Omega_{min} \cap \partial \mathcal T^{(i)}_+$ contains $\overline{T^{(i)}}$ but does not contain any point of any other (open) face of $\mathcal T^{(i)}_+$ of positive codimension that is adjacent to $p^{(i)}_4$.
\end{lemma}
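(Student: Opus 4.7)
The plan is to prove the displayed equality by a support-plane argument for the easy inclusion and a tangent-cone argument at $p_4 := p_4^{(i)}$ for the reverse inclusion; the ``in particular'' claim will then drop out. Write $F_j := P_j^{(i)}\cap\overline{\mathcal T^{(i)}_+}$ and $F'_j := P_j^{(i)}\cap\overline{\mathcal T^{(i)}_-}$ for $j\in\{1,2,3\}$, so that $\partial\mathcal T^{(i)}_+\setminus T^{(i)} = F_1\cup F_2\cup F_3$ and $\partial\mathcal T^{(i)}_-\setminus T^{(i)} = F'_1\cup F'_2\cup F'_3$. For the easy inclusion: by Lemma~\ref{lem:convexdomain} we have $\mathcal T^{(i)}_-\subset\Omega_{max}$, hence each $F'_j\subset\overline{\mathcal T^{(i)}_-}\subset\overline{\Omega_{max}}$; since $P_j^{(i)}$ is a support hyperplane of $\Omega_{max}$, $F'_j\subset P_j^{(i)}\cap\overline{\Omega_{max}}\subset\partial\Omega_{max}$, and $F'_j\subset P_j^{(i)}\subset\partial U^{(i)}$.

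For the reverse inclusion, fix an affine chart $\RP^3\setminus\Pi$ containing the properly convex $\overline{\Omega_{max}}$. A direct projective computation (conveniently carried out in the chart $x_4=1$ with $p_4$ at the origin and $T^{(i)}$ at infinity) shows that the open tangent cones of $\mathcal T^{(i)}_+$ and $\mathcal T^{(i)}_-$ at $p_4$ are the two antipodal open octants in $T_{p_4}\RP^3$ cut out by the tangent planes of $P_1^{(i)},P_2^{(i)},P_3^{(i)}$. Since these are support hyperplanes of $\Omega_{max}$ at $p_4$ and $\mathcal T^{(i)}_-\subset\Omega_{max}$, the tangent cone of $\overline{\Omega_{max}}$ at $p_4$ is pinned down to the closed $\mathcal T^{(i)}_-$-octant, meeting the closed $\mathcal T^{(i)}_+$-octant only at the origin. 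Suppose now $q\in\partial U^{(i)}\cap\partial\Omega_{max}$ and $q\notin\partial\mathcal T^{(i)}_-\setminus T^{(i)}$. From
\[
\partial U^{(i)} = (\partial\mathcal T^{(i)}_+\setminus T^{(i)})\cup(\partial\mathcal T^{(i)}_-\setminus T^{(i)})
\]
with intersection $\partial T^{(i)}\cup\{p_4\}$, the point $q$ lies in the relative interior of some $F_j$ or on some open edge $(p_4,p_k^{(i)})$; in either case the tangent direction at $p_4$ toward $q$ is a nonzero element of the closed tangent cone of $\overline{\mathcal T^{(i)}_+}$, hence not in that of $\overline{\Omega_{max}}$. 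But $p_4,q\in\overline{\Omega_{max}}$ together with affine convexity in the chart force $[p_4,q]\subset\overline{\Omega_{max}}$, contradicting the tangent-cone statement.

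For the ``in particular'' claim, $\overline{T^{(i)}}\subset\partial\Omega_{min}\cap\partial\mathcal T^{(i)}_+$ is immediate from $T^{(i)}\subset\partial\Omega_{min}$ by closure. Conversely, suppose $q\in\partial\Omega_{min}\cap\partial\mathcal T^{(i)}_+$ lies on another positive-codimension facet of $\mathcal T^{(i)}_+$ adjacent to $p_4$; then $q$ is in the relative interior of some $F_j$, on some open edge $(p_4,p_k^{(i)})$, or equals $p_4$ itself. In the first two cases, $q\in\overline{\Omega_{min}}\subset\overline{\Omega_{max}}$ together with $q\in P_j^{(i)}$ forces $q\in\partial\Omega_{max}\cap\partial U^{(i)} = F'_1\cup F'_2\cup F'_3$ by the equality; but $F'_1\cup F'_2\cup F'_3$ meets the relative interior of $F_j$ and the open edges $(p_4,p_k^{(i)})$ only at points of the vertex set $\{p_1^{(i)},p_2^{(i)},p_3^{(i)},p_4\}$, none of which are relatively interior to those facets --- contradiction. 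For $q=p_4$, the tangent cone of $\overline{\Omega_{min}}$ at $p_4$ would lie both in the closed tangent cone of $\overline{\mathcal T^{(i)}_+}$ (since $\Omega_{min}\subset\mathcal T^{(i)}_+$) and in that of $\overline{\Omega_{max}}$, which intersect only at $0$; this makes $p_4$ an isolated boundary point of the closure of the nonempty open convex set $\Omega_{min}$, impossible.

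The main technical hurdle is the tangent-cone identification at $p_4$: the antipodality of the tangent cones of $\overline{\mathcal T^{(i)}_\pm}$ and the pinning down of the tangent cone of $\overline{\Omega_{max}}$ to the closed $\mathcal T^{(i)}_-$-octant. Once this projective-geometric input is established, the remainder is routine affine convex geometry in the chosen chart.
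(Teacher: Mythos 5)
Your approach to the ``hard'' inclusion (and to the ``in particular'' part) has a genuine gap at the step where you assert that ``the tangent direction at $p_4$ toward $q$ is a nonzero element of the closed tangent cone of $\overline{\mathcal T^{(i)}_+}$.'' Fix an affine chart $\mathbb A = \RP^3\setminus\Pi$ with $\overline{\Omega_{max}}\subset\mathbb A$. The line $\ell = \overline{p_4q}$ lies in some $P_j^{(i)}$ and contains a point $s\in\partial T^{(i)}\subset\overline{\Omega_{max}}$ on the ``far'' side. Since $\overline{U^{(i)}}$ is not properly convex, $\Pi$ must meet $\overline{\mathcal T^{(i)}_+}$, and there is no reason the affine segment $[p_4,q]_{\mathbb A}$ should be the arc of $\ell$ contained in the facet $F_j$; it may equally well be the other arc, passing through $F'_j$ and through $s$. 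Indeed, one can check (and your own tangent-cone computation forces it): since $[p_4,q]_{\mathbb A}\subset\overline{\Omega_{max}}$, its initial direction at $p_4$ lies in the tangent cone of $\overline{\Omega_{max}}$, which you correctly identify as the closed $\mathcal T^{(i)}_-$-octant. As the closed $\mathcal T^{(i)}_+$- and $\mathcal T^{(i)}_-$-octants meet only at the origin, the direction you need is in the \emph{wrong} cone, and no contradiction appears. The underlying point is that ``the direction from $p_4$ toward $q$'' is not an intrinsic notion but depends on the choice of affine chart, and the chart in which $\overline{\mathcal T^{(i)}_+}$ looks like a straight octant at $p_4$ (the chart with $T^{(i)}$ at infinity) is incompatible with the chart in which $\overline{\Omega_{max}}$ is affinely convex.

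More fundamentally, the reverse inclusion cannot follow from convexity and the support-plane data alone: it is not hard to construct a compact convex set in $P_3^{(i)}\cong\RP^2$ containing $F'_j$, having $\ell_1,\ell_2$ as support lines at $p_4$, and also containing an interior point of $F_j$ (\eg the convex hull of $F'_j$ and such a point, in an appropriate chart). What rules this out for $\overline{\Omega_{max}}\cap P_j^{(i)}$ is the $\rho(\Delta_i)$-invariance, which your proof never invokes. The paper's argument instead uses the dynamics: for a suitable $\gamma\in\Delta_i$, the vertex $p_4^{(i)}$ is an attracting fixed point for $\rho(\gamma)$ acting on the facet $\mathscr F$, even though, by the middle eigenvalue condition, it is never an attractor on the full $\mathcal T^{(i)}_+$. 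Pushing a boundary point of $\Omega_{max}$ in $\mathscr F$ around by $\rho(\Delta_i)$ and taking convex hulls then forces $\Omega_{max}$ to contain $\mathcal T^{(i)}_+$ and hence all of $U^{(i)}$, which is impossible because $U^{(i)}$ is not properly convex. To repair your proof you would need to import this invariance/dynamics input; the easy inclusion, the decomposition of $\partial U^{(i)}$, and the tangent-cone identification for $\overline{\Omega_{max}}$ are all fine.
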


\begin{proof}
Suppose $p \in \partial \Omega_{max}$ lies on a face $\mathscr F$ of $\mathcal T^{(i)}_+$ of positive codimension adjacent to $p^{(i)}_4$. Although $p^{(i)}_4$ is never an attractor for the action of any $\rho(\gamma)$ on $\mathcal T^{(i)}_+$, it is, by dimension count, an attractor for the action of some $\rho(\gamma)$ on $\mathscr F$. Then by convexity $\Omega_{max}$ contains $\mathcal T^{(i)}_+$ and therefore all of $U^{(i)}$. However this is impossible since $U^{(i)}$ is not properly convex and $\Omega_{max}$ is properly convex.
\end{proof}

%%%%%%%%%%%%%%%%%%%%%
%
%
%
%%%%%%%%%%%%%%%%%%%%%%

 \section{Gluing convex projective structures}\label{gluing}

 In this section we prove Theorem~\ref{thm:convex-gluing}, which states that convex projective structures may be glued along principal totally geodesic torus boundary components whenever the holonomy matching condition \eqref{eqn:matching} is satisfied. This will complete the proof of Theorem~\ref{thm:main}. Theorem~\ref{thm:convex-gluing} is proved by induction using the following two Lemmas:
\begin{lemma}\label{lem:glue-different}
Let $M_1 = \Gamma_1 \backslash \Omega_1$, and $M_2 = \Gamma_2 \backslash \Omega_2$ be two properly convex projective three-manifolds.
Let $f \co \partial_1 \to \partial_2$ be a homeomorphism between principal totally geodesic torus boundary components $\partial_1$ and $\partial_2$ of $M_1$ and $M_2$, respectively, satisfying the holonomy matching condition: There exists $g \in \tilde G$ such that
\begin{align}\label{eqn:g-equivariance}
f_* \gamma &= g \gamma g^{-1}
\end{align}
for all $\gamma \in \Delta_1$, where $f_* \co \Delta_1 \to \Delta_2$ is the group homomorphism induced by $f$ on the fundamental groups $\Delta_1$ and $\Delta_2$ of $\partial_1$ and $\partial_2$ respectively. Then the topological manifold $M_1 \cup_f M_2$ admits a properly convex structure which restricts to the given properly convex projective structures on $M_1$ and $M_2$.
\end{lemma}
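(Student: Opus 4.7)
\emph{Normalization.}  I would first replace the domain $\Omega_2$ and the holonomy $\rho_2$ by their $g^{-1}$-conjugates so that $\rho_1|_{\Delta_1}=\rho_2\circ f_*$ as representations of $\Delta_1$.  With this matching, the common peripheral representation has the same four projective fixed points $p_1,p_2,p_3,p_4$, so the principal triangles $T_1\subset \partial\Omega_1$ and $T_2\subset\partial\Omega_2$ (each with vertex set $\{p_1,p_2,p_3\}$) coincide in a single triangle $T$ lying in the plane $P_4=\operatorname{span}(p_1,p_2,p_3)$.  By the discussion preceding Lemma~\ref{lem:proper-contain}, each of $\overline{\Omega_1}$ and $\overline{\Omega_2}$ is tangent to $P_4$ along $\overline T$ and lies on one local side of $P_4$ near $T$.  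If they happen to lie on the same side, I would further conjugate $\Omega_2$ by the involution $\mathrm{diag}(1,1,1,-1)$ expressed in the eigenbasis $(p_1,p_2,p_3,p_4)$; this element centralizes the peripheral subgroup (so the holonomy matching is preserved) and swaps the two local sides of $P_4$.  After this normalization $\Omega_1$ and $\Omega_2$ lie on opposite local sides of $P_4$, tangent to $P_4$ along the common triangle $T$.

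\emph{Glued projective structure and development.}  Because $f_*$ intertwines $\rho_1|_{\Delta_1}$ and $\rho_2|_{\Delta_2}$ after the normalization, $f$ is isotopic to a projective identification of $\partial_1$ with $\partial_2$, and so the projective atlases of $M_1$ and $M_2$ glue to a projective atlas on $N=M_1\cup_f M_2$.  A chart at a point of the gluing torus $\partial_1=\partial_2$ develops into a neighborhood of a point of $T$ in $\RP^3$ straddling the plane $P_4$, one side in $\Omega_1$ and the other in $\Omega_2$.  By Bass--Serre theory $\pi_1 N\cong\Gamma_1*_\Delta\Gamma_2$, and $\widetilde N$ is a tree of copies of $\widetilde{M_1}$ and $\widetilde{M_2}$ glued along lifts of the central torus.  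Starting from a base copy developed as $\Omega_1\subset\RP^3$ and extending inductively across each lifted principal triangle by attaching a translate of $\Omega_2$ (respectively $\Omega_1$) on the opposite side of the containing plane, I obtain a developing map $dev\colon\widetilde N\to\RP^3$ whose image $\widetilde\Omega$ is a union of translates of $\Omega_1$ and $\Omega_2$, adjacent pieces meeting only along a common principal triangle.

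\emph{Proper convexity and main obstacle.}  The remaining task is to show that $\widetilde\Omega$ is a properly convex open subset of $\RP^3$.  For convexity I would argue by induction on the combinatorial distance in the Bass--Serre tree.  The base case, for two adjacent pieces $A,B$ on opposite sides of a shared plane $P$ and tangent to $P$ along a common triangle $T'$, rests on the following point: a segment from a point of $A$ to a point of $B$ crosses $P$ at exactly one point $r$, and by the tangency property $r\in\overline{T'}$; if $r$ lies in the relative interior of $T'$ then a neighborhood of $r$ in $\RP^3$ is contained in $A\cup T'\cup B$, so the whole segment lies in $\widetilde\Omega$.  Proper convexity then follows by embedding $\widetilde\Omega$ in the union of the maximal thickenings $\rho(\gamma)\Omega_{1,\max}$ and $\rho(\gamma)\Omega_{2,\max}$ (which pairwise overlap only in principal prisms, by Lemma~\ref{lem:proper-contain}) and noting that each such thickening is properly convex.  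The main obstacle I expect is the case in which a segment crosses the shared plane at a point of $\partial T'$ (an edge or vertex of the triangle), which lies simultaneously on the closure of several principal facets of $\widetilde\Omega$; the resolution will require a careful local analysis near each peripheral cusp, using that additional principal triangles coming from other peripheral subgroups or other $\Gamma_i$-orbits meet $T'$ precisely along $\partial T'$ and together cut out the correct locally convex neighborhood in $\widetilde\Omega$.
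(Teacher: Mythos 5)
Your normalization and the combinatorial description of $\widetilde N$ match the paper's setup, but the route you propose for proper convexity has a genuine gap, precisely at the point you flag as the ``main obstacle,'' and your suggested workaround does not hold up.

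The difficulty is not merely the local analysis of segments meeting $\partial T'$. Your fallback -- ``proper convexity then follows by embedding $\widetilde\Omega$ in the union of the maximal thickenings $\rho(\gamma)\Omega_{1,\max}$ and $\rho(\gamma)\Omega_{2,\max}$'' -- is fallacious: a union of properly convex sets that overlap only in small pieces is in general neither convex nor contained in any affine chart, so containment in such a union gives you nothing about $\widetilde\Omega$. What the paper does instead, and what is missing from your argument, is a ping--pong lemma (Lemma~\ref{lem:ping}) showing that for $\gamma_1\in\Gamma_1\setminus\Delta$ one has $\gamma_1\,\overline{\mathcal T^{(2)}}\subset\mathcal T^{(1)}\setminus\Omega_1$ and symmetrically, where $\mathcal T^{(1)}=\mathcal T^{(1)}_+=\mathcal T^{(2)}_-$ and $\mathcal T^{(2)}=\mathcal T^{(1)}_-=\mathcal T^{(2)}_+$. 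This is what lets one prove, by induction on the syllable length of $\gamma\in\Gamma_1*_\Delta\Gamma_2$, that \emph{every} translate $\rho(\gamma)\overline{\Omega_j}$ is nested inside the \emph{single} properly convex domain $\overline{(\Omega_1)}_{\max}$, and simultaneously that distinct tiles do not overlap, giving injectivity of the developing map (Lemma~\ref{lem:pong}). Once the whole development lives inside one fixed affine chart containing $\overline{(\Omega_1)}_{\max}$, proper convexity drops out easily: the development of any finite connected union of closed tiles is a compact set whose boundary is locally convex (adjacent tiles meet in flat principal triangles and local convexity holds there because the two tiles lie on opposite sides of the supporting hyperplane), hence it is convex; passing to the increasing union gives convexity of the full image, and boundedness in the affine chart gives proper convexity (Lemma~\ref{lem:prop-convex}). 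This cleanly sidesteps the segment-through-$\partial T'$ case-analysis you were attempting, because you never need to argue convexity of $A\cup T'\cup B$ directly from segments; you get it from local convexity plus compactness inside a single affine chart. In short: you need to replace ``embed in a union of properly convex sets'' by ``ping--pong into one properly convex set,'' which requires proving that $\mathcal T^{(1)}_-\subset(\Omega_1)_{\max}$ and using Lemma~\ref{lem:proper-contain} to see that translates of one tetrahedron by non-peripheral elements land strictly inside the complementary open tetrahedron.
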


\begin{lemma}\label{lem:glue-same}
Let $M = \Gamma \backslash \Omega$ be a properly convex projective three-manifold and let $f \co \partial_1 \to \partial_2$ be a homeomorphism between principal totally geodesic boundary components $\partial_1, \partial_2$ of $M$ that satisfies the holonomy matching condition: There exists $g \in \tilde G$ such that
\begin{align}\label{eqn:g-equivariance-2}
f_* \gamma &= g \gamma g^{-1}
\end{align}
for all $\gamma \in \Delta_1$, where $f_* \co \Delta_1 \to \Delta_2$ is the group homomorphism induced by $f$ on the fundamental groups $\Delta_1$ and $\Delta_2$ of $\partial_1$ and $\partial_2$ respectively. Then the topological manifold $M_f$ admits a properly convex structure which restricts to the given properly convex projective structure on $M$.
\end{lemma}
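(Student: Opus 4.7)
The plan is to mimic the proof of Lemma~\ref{lem:glue-different}, with the amalgamated free product replaced by an HNN extension. By van Kampen, $\pi_1(M_f)$ is canonically the HNN extension $\Gamma_f := \Gamma *_{\Delta_1 \stackrel{f_*}{\cong} \Delta_2}$ with stable letter $t$. Define an extension $\rho_f: \Gamma_f \to \tilde G$ of $\rho$ by $\rho_f(t) := g$; the defining HNN relation $t\gamma t^{-1} = f_*\gamma$ for $\gamma \in \Delta_1$ is respected thanks to \eqref{eqn:g-equivariance-2}. It remains to realize $\rho_f$ as the holonomy of a properly convex projective structure on $M_f$.

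For the geometric construction, pass to the maximal thickening $\Omega_{max}$ of $\Omega$ from Section~\ref{boundarygeom}. For $i \in \{1,2\}$, let $T^{(i)}$ be the principal totally geodesic triangle covering $\partial_i$ and let $\mathcal T^{(i)}_+, \mathcal T^{(i)}_-$ be the inner and outer principal tetrahedra at $T^{(i)}$. Since $\rho(\Delta_1)$ and $\rho(\Delta_2)$ are conjugate via $g$, and each group determines its triple of strongly attracting fixed vertices by Lemma~\ref{lem:attractors}, the element $g$ sends the three vertices of $T^{(1)}$ bijectively to those of $T^{(2)}$. After possibly replacing $g$ by $g\delta$ for some $\delta \in \rho(\Delta_1)$, we may arrange that $g \cdot T^{(1)} = T^{(2)}$ and $g \cdot \mathcal T^{(1)}_- = \mathcal T^{(2)}_+$, so that the gluing by $g$ matches the outer principal tetrahedron of $\partial_1$ with the inner principal tetrahedron of $\partial_2$. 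Now construct $\Omega_f$ by developing along the Bass--Serre tree of the HNN extension: for each coset $h\Gamma \in \Gamma_f/\Gamma$ place a translate $\rho_f(h) \cdot \overline{\Omega}$, and observe that tree-adjacent translates share exactly one principal triangle and lie on opposite sides of it. Set $\Omega_f$ to be the interior of the union $\bigcup_{h\Gamma \in \Gamma_f/\Gamma} \rho_f(h) \cdot \overline{\Omega}$. The tree structure gives proper discontinuity of $\rho_f(\Gamma_f)$ on $\Omega_f$ and identifies the quotient with $M_f$.

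The main obstacle is showing that $\Omega_f$ is properly convex. Local convexity across each glued triangle is automatic: two convex sets meeting along a totally geodesic triangle and lying on opposite sides have convex union near that triangle. Global convexity then follows by induction on distance in the Bass--Serre tree, since any candidate chord $[p,q] \subset \overline{\Omega_f}$ passes through only finitely many copies of $\overline{\Omega}$. Proper convexity — ruling out that the union wraps around and fails to sit in an affine chart — is the subtle step; the natural approach is to dualize, constructing the candidate dual $\Omega_f^*$ as an analogous union of translates of $\Omega^*$, using Lemma~\ref{lem:proper-contain} to control which supporting hyperplanes persist across each principal tetrahedron, and verifying that $\Omega_f^*$ is non-empty. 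Equivalently, one can produce a global affine chart by exhibiting a common supporting hyperplane that avoids every $\rho_f(h)\overline{\Omega}$ via a fixed-point/ping-pong argument on the Bass--Serre tree. Once proper convexity is secured, the inclusion of each copy of $\Omega$ into $\Omega_f$ descends to a projective embedding $M \hookrightarrow M_f$ that is isotopic to the canonical topological map, completing the proof.
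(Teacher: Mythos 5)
Your overall plan — HNN extension with stable letter $t \mapsto g$, development along the Bass--Serre tree, ping-pong, local-to-global convexity — is the same as the paper's, which presents the proof of this lemma as a five-step modification of the detailed proof of Lemma~\ref{lem:glue-different}. However, there is one concrete error and the proper-convexity step is left too vague.

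The error is in how you fix the orientation of $g$. You propose replacing $g$ by $g\delta$ for some $\delta \in \rho(\Delta_1)$ to arrange $g \cdot \mathcal T^{(1)}_- = \mathcal T^{(2)}_+$. But every $\delta \in \rho(\Delta_1)$ preserves both $\mathcal T^{(1)}_+$ and $\mathcal T^{(1)}_-$ individually, so $(g\delta)\mathcal T^{(1)}_- = g\mathcal T^{(1)}_-$; right-multiplication by elements of $\rho(\Delta_1)$ accomplishes nothing. (That $g T^{(1)} = T^{(2)}$ already holds without any modification, since conjugating $\Delta_1$ to $\Delta_2$ sends attracting fixed points to attracting fixed points.) The correct modification is to replace $g$ by $rg$ where $r$ is a reflection in the centralizer $Z(\Delta_2)$: in the eigenbasis of $\Delta_2$, the element $\operatorname{diag}(1,1,1,-1)$ acts trivially on $T^{(2)}$, fixes $p^{(2)}_4$, and swaps $\mathcal T^{(2)}_+$ with $\mathcal T^{(2)}_-$; since $r$ centralizes $\rho(\Delta_2)$, the conjugation relation $(rg)\gamma(rg)^{-1} = f_*\gamma$ is preserved.

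The second issue is that your proper-convexity argument is not nailed down: you offer either a duality argument or an unspecified ``fixed-point/ping-pong'' argument on the tree, and your intermediate claim about induction over chords $[p,q] \subset \overline{\Omega_f}$ already presupposes the image lies in an affine chart. The paper's route is more direct and you should make it explicit: the ping-pong lemma (that $\gamma \overline{\mathcal T^{(i)}_-} \subset \mathcal T^{(i)}_+ \setminus \Omega$ for $\gamma \in \Gamma \setminus \Delta_i$, together with $g\mathcal T^{(1)}_\mp = \mathcal T^{(2)}_\pm$) shows that the augmented developing map is injective with image contained in $\overline{\Omega_{max}}$. Since $\Omega_{max}$ is properly convex, this places the entire union of tiles inside a fixed affine chart at once, after which convexity follows from local convexity at boundary points of finite tile unions exactly as in Lemma~\ref{lem:prop-convex}. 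There is no need to dualize or to search for a supporting hyperplane separately; $\overline{\Omega_{max}}$ is the ``playing field'' that does both jobs.
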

We now give a detailed proof of Lemma~\ref{lem:glue-different}. The proof of Lemma~\ref{lem:glue-same} is nearly identical.

For convenience, throughout this subsection, we will identify the universal cover of $M_i$ with $\Omega_i$ and the fundamental group $\pi_1 M_i$ with $\Gamma_i \subset \tilde G$, for both $i \in \{1,2\}$. For $i \in \{1,2\}$, let $\partial_i = \Delta_i \backslash T^{(i)}$ be the given principal totally geodesic boundary torus of $M_i$, where $T^{(i)}$ is a totally geodesic triangle contained in~$\partial \Omega_i$. 
For each $i$, let $\mathcal T^{(i)}_+$ and $\mathcal T^{(i)}_-$ be defined as in the previous section, so that $\Omega_i \subset \mathcal T^{(i)}_+$ and $\mathcal T^{(i)}_-$ is a principal tetrahedron for $\Omega_i$. By translating $\Omega_1$ by $g$ and possibly a reflection in the centralizer $Z(\Delta_2)$, we may assume henceforth that $g = 1$ and that $\Omega_1$ and $\Omega_2$ are positioned so that $T^{(1)} = T^{(2)} =: T$, $\mathcal T^{(2)}_- = \mathcal T^{(1)}_+ =: \mathcal T^{(1)}$ and $\mathcal T^{(1)}_- = \mathcal T^{(2)}_+ =: \mathcal T^{(2)}$. Then, by~\eqref{eqn:g-equivariance}, the identity projective transformation $g = 1$ descends to a \emph{projective} gluing map in the isotopy class of $f$ that glues $M_1$ to $M_2$ along their totally geodesic boundary components. Any neighborhood in $ \Omega_1 \cup T \cup \Omega_2$ of a point $p \in T$ serves as a chart defining a projective structure on a neighborhood of $\partial_1 = \partial_2$ in the glued up manifold $N = M_1 \cup_f M_2$. The
charts on $M_1$ and $M_2$ are compatible with these new charts, hence $N$ is endowed with a projective structure in which $\partial_1 = \partial_2$ is an embedded totally geodesic torus. To prove Theorem~\ref{thm:convex-gluing}, we now show that this projective structure is properly convex.

The fundamental group of $N$ naturally identifies with the free product $\Gamma = \Gamma_1 *_{f_*} \Gamma_2$ amalgamated over the identification $f_* \co \Delta_1 \to \Delta_2$. Let $\Delta$ denote the inclusions of $\Delta_1$ and $\Delta_2$, which are identified, in $\Gamma$. We will denote the product of two elements $\alpha, \beta$ in the abstract group $\Gamma$ by the notation $\alpha \star \beta$ in order to avoid confusion with matrix multiplication in $\tilde G$. The universal cover $\widetilde N$ is described combinatorially as:
\begin{align*}
\widetilde N &= \Gamma \times \Omega_{1,\partial} / \sim_1 \cup \ \ \Gamma \times \Omega_{2,\partial} / \sim_2
\end{align*}
where $\Omega_{1,\partial}$ denotes the union of $\Omega_1$ with the $\Gamma_1$ orbit of $T$, and $\sim_1$ is the equivalence relation generated by $(\gamma, p) \sim_1 (\gamma \gamma_1^{-1}, \gamma_1p)$ for all $\gamma_1 \in \Gamma_1$, and similarly for $\Omega_{2,\partial}$ and $\sim_2$. We refer to each $\{\gamma\}\times \Omega_{i,\partial}$ as a \emph{tile}. If $p \in T = \Omega_{1,\partial} \cap \Omega_{2,\partial}$, then we consider the points $(\gamma, p) \in \Gamma \times \Omega_{1,\partial}$ and $(\gamma, p) \in \Gamma \times \Omega_{2,\partial}$ to be identified.
The developing map $dev$ for the natural projective structure on $N$ is defined by the formula $dev([\gamma, p]) = \rho(\gamma) p$, for any $\gamma \in \Gamma$ and $p \in \Omega_{1,\partial} \cup \Omega_{2,\partial}$, where $\rho \co \Gamma \to \tilde G$, the holonomy representation, is defined by the property that its restriction to $\Gamma_i$ is the inclusion map for each $i \in \{1,2\}$. In other words if $\gamma_1, \ldots, \gamma_m$ are elements of $\Gamma_1 \cup \Gamma_2$, then $\rho(\gamma_1 \star \cdots \star \gamma_m) = \gamma_1 \cdots \gamma_m$. 
We will also consider an augmented version of $\widetilde N$, defined by
\begin{align*}
\widetilde N_{aug} &= \Gamma \times \overline \Omega_1 / \sim_1 \cup \ \ \Gamma \times \overline \Omega_2 / \sim_2
\end{align*}
which includes the full boundaries of the convex tiles. The developing map $dev$ extends naturally to an augmented developing map $dev \co \widetilde N_{aug} \to \RP^3$. It is a local embedding, even at points of the tile boundaries $[\gamma, \partial \Omega_i]$.

We show that the developing map is injective with image contained in an affine chart via a ping-pong lemma.

\begin{lemma}\label{lem:ping}
Let $\gamma_1 \in \Gamma_1 \setminus \Delta$ and $\gamma_2 \in \Gamma_2 \setminus \Delta$. Then
\begin{enumerate}
\item $\gamma_1 \overline{\mathcal T^{(2)}} \subset \mathcal T^{(1)} \setminus \Omega_1$. 
\item $\gamma_2 \overline{\mathcal T^{(1)}} \subset \mathcal T^{(2)} \setminus \Omega_2$. 
\end{enumerate}
\end{lemma}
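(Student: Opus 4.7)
The plan is to prove (1) directly from the combinatorial structure of the maximal thickening $\Omega_{1,max}$ developed in Section~\ref{boundarygeom}; part (2) will then follow by interchanging the roles of $M_1$ and $M_2$. So fix $\gamma_1 \in \Gamma_1 \setminus \Delta$. By construction, $\mathcal T^{(2)} = \mathcal T^{(1)}_-$ is the principal tetrahedron of $\Omega_{1,max}$ attached to $T$, corresponding to the trivial coset in $\Gamma_1/\Delta$; so by Lemma~\ref{lem:convexdomain}(3) the translate $\gamma_1 \mathcal T^{(2)}$ is a distinct principal tetrahedron, corresponding to the coset $\gamma_1\Delta$, and in particular is disjoint from $\mathcal T^{(2)}$.

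Let $P$ be the projective hyperplane containing $T$, and let $H^+, H^-$ be the open half-spaces bounded by $P$, with $\Omega_1 \subset H^+$ and $\mathcal T^{(2)} \subset H^-$. The crux of the argument will be the identification $\Omega_{1,max} \cap H^- = \mathcal T^{(2)}$. To see this, suppose $x \in \Omega_{1,max} \cap H^-$. Since $\overline{\Omega_{1,min}} \subset \overline{H^+}$, we have $x \notin \overline{\Omega_{1,min}}$, so by Lemma~\ref{lem:convexdomain}(3) the point $x$ lies in some principal tetrahedron $\gamma\mathcal T^{(i)}_-$; its attaching principal triangle $\gamma T^{(i)} \subset \partial\Omega_{1,min} \cap \overline{H^+}$ must meet the closure of $H^-$, and hence be contained in $P$. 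But the principal triangles are the maximal flat components of $\partial\Omega_{1,min}$ and are mutually disjoint, and the unique such triangle contained in $P$ is $T$ itself, whence $\gamma\mathcal T^{(i)}_- = \mathcal T^{(2)}$. Combined with $\gamma_1 \mathcal T^{(2)} \cap \mathcal T^{(2)} = \emptyset$, this forces $\gamma_1 \mathcal T^{(2)} \subset H^+$.

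To conclude $\gamma_1 \overline{\mathcal T^{(2)}} \subset \mathcal T^{(1)}$, I would use that the three remaining faces of $\mathcal T^{(1)}$ lie on the planes $P_1, P_2, P_3$ of the triangular prism $U$; by Lemma~\ref{lem:proper-contain} these are support planes of $\Omega_{1,max}$, so $\gamma_1 \mathcal T^{(2)} \subset \Omega_{1,max}$ automatically sits on the correct side of each. Strict containment in the open tetrahedron $\mathcal T^{(1)}$ comes from rerunning the same maximality argument on the face $\gamma_1 T$: since $\gamma_1 \notin \Delta$ and peripheral subgroups of a boundary-incompressible hyperbolic $3$-manifold are self-normalizing, $\gamma_1 T \neq T$ as subsets of $\RP^3$, and $\gamma_1 T$ cannot be contained in any of $P, P_1, P_2, P_3$ without violating maximality/disjointness of the principal triangles; hence $\gamma_1 \overline{\mathcal T^{(2)}}$ does not touch $\partial\mathcal T^{(1)}$. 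Finally, $\overline{\mathcal T^{(2)}} \cap \Omega_1 = \emptyset$ because $P$ is a support plane of $\Omega_1$ that separates it from $\mathcal T^{(2)}$, and $\Gamma_1$-invariance of $\Omega_1$ then gives $\gamma_1 \overline{\mathcal T^{(2)}} \cap \Omega_1 = \emptyset$.

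The hard step is the identification $\Omega_{1,max} \cap H^- = \mathcal T^{(2)}$: this is where the full local structure of $\partial\Omega_{1,min}$ along the principal triangle $T$, and in particular the Benoist-type dichotomy between strict convexity and flat triangular pieces that underlies Lemma~\ref{lem:convexdomain}, really enters the argument.
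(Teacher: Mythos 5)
Your overall skeleton is the right one and matches the paper's: disjointness of distinct principal tetrahedra (Lemma~\ref{lem:convexdomain}(3)) plus the containment of $(\Omega_1)_{max}$ in the prism $U=U^{(1)}$ squeeze $\gamma_1\mathcal T^{(2)}$ into $\mathcal T^{(1)}$, Lemma~\ref{lem:proper-contain} upgrades this to the closure, and $\Gamma_1$-invariance of $\Omega_1$ gives the final disjointness from $\Omega_1$. However, the step you yourself flag as ``hard'' --- the identification $\Omega_{1,max}\cap H^-=\mathcal T^{(2)}$ --- has a genuine gap, and it is also an unnecessary detour.

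The gap: you assert that if $x\in\gamma\mathcal T^{(i)}_-\cap H^-$, then the attaching triangle $\gamma T^{(i)}$ must meet $\overline{H^-}$. But a principal tetrahedron is the cone over $\gamma T^{(i)}$ from the apex $\gamma p^{(i)}_4$, and if the \emph{apex} lies in $H^-$ while $\gamma T^{(i)}$ stays strictly inside $H^+$, the open tetrahedron still has points in $H^-$. Your argument never rules out the apex crossing $P$, and $\Omega_{1,max}\subset\overline{H^+}$ is \emph{false} (it contains $\mathcal T^{(2)}$!), so one cannot appeal to a support-plane property of $P$ for $\Omega_{1,max}$ to control the apex. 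The detour is also unnecessary: since $P_1,P_2,P_3$ are support planes of $\Omega_{1,max}$ (which you invoke a sentence later), one has $\Omega_{1,max}\subset U=\mathcal T^{(1)}\cup T\cup\mathcal T^{(2)}$, and then $U\cap H^-=\mathcal T^{(2)}$ gives your claimed identity in one line. Better still, the paper skips the half-space $P$ entirely: $\gamma_1\mathcal T^{(2)}\subset\Omega_{1,max}\subset U$ is open, connected, and disjoint from $\mathcal T^{(2)}$, hence lies in $\mathcal T^{(1)}$. A second, smaller soft spot: in your closing paragraph you argue that $\gamma_1 T$ is not contained in any of the planes $P,P_1,P_2,P_3$; this controls the face $\gamma_1 T$ but does not by itself exclude lower-dimensional contact between $\gamma_1\overline{\mathcal T^{(2)}}$ and $\partial\mathcal T^{(1)}$ (e.g.\ a vertex or edge of $\gamma_1\overline{\mathcal T^{(2)}}$ touching a bigon). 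The paper gets the closure statement cleanly from the first assertion of Lemma~\ref{lem:proper-contain}: $\overline{\Omega_{1,max}}\cap\partial U=\partial\mathcal T^{(2)}\setminus T$, which $\gamma_1\overline{\mathcal T^{(2)}}$ (being contained in $\overline{\Omega_{1,max}}$ and disjoint from $\mathcal T^{(2)}$) cannot meet. I'd suggest replacing your ``hard step'' with the direct use of $\Omega_{1,max}\subset U$ and routing all boundary contact through Lemma~\ref{lem:proper-contain} as stated.
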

\begin{proof}
We prove only the first statement as the second follows by symmetry.
Note that $\mathcal T^{(2)} = \mathcal T^{(1)}_-$ is contained in $(\Omega_1)_{max}$ (Definition~\ref{def:max}). Hence $\gamma_1 \mathcal T^{(2)} \subset (\Omega_1)_{max}$ as well. $(\Omega_1)_{max}$ is contained in the triangular prism $U = \mathcal T^{(1)} \cup T \cup \mathcal T^{(2)}$, which is convex (but not properly convex) and bounded by three bigons. Since $\gamma_1 \notin \Delta$, $\gamma_1 \mathcal T^{(2)}$ is not equal to $\mathcal T^{(2)}$  and therefore the two tetrahedra do not intersect. Hence $\gamma_1 \mathcal T^{(2)}$ must lie in $\mathcal T^{(1)}$. By Lemma~\ref{lem:proper-contain}, we also have that $\overline{\gamma_1 \mathcal T^{(2)}} = \gamma_1 \overline{\mathcal T^{(2)}}$ lies in $\mathcal T^{(1)}$, that is no point of $\gamma_1 \overline{\mathcal T^{(2)}}$ intersects $\partial \mathcal T^{(1)}$.  Of course $\gamma_1$ preserves $\Omega_1$, so $\gamma_1 \overline{ \mathcal T^{(2)}}$ does not intersect $\Omega_1$ because $\overline{\mathcal T^{(2)}}$ does not intersect~$\Omega_1$.
\end{proof}

\begin{lemma}\label{lem:pong}
The augmented developing map $dev \co \widetilde N_{aug} \to \RP^3$ is injective and its image is contained in $\overline{(\Omega_1)}_{max}$. 
\end{lemma}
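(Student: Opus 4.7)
The plan is a ping-pong argument based on the Bass--Serre tree of the amalgamated product $\Gamma = \Gamma_1 \ast_\Delta \Gamma_2$. The tiles of $\widetilde N_{aug}$ are naturally indexed by the tree vertices $\Gamma/\Gamma_1 \sqcup \Gamma/\Gamma_2$; the tile labeled $[\gamma \Gamma_i]$ develops to $\rho(\gamma) \overline{\Omega_i} \subset \RP^3$, which depends only on the coset because $\rho|_{\Gamma_i}$ preserves $\overline{\Omega_i}$. Adjacent vertices in the tree correspond to tiles whose developed images share a principal triangle of the form $\rho(\gamma) T$.

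The core technical step is the following inductive claim, proved by iterating Lemma~\ref{lem:ping}: for any reduced word $\gamma = \delta_1 \delta_2 \cdots \delta_n \in \Gamma$ with $n \geq 1$ and factors $\delta_\ell$ alternating in $(\Gamma_1 \setminus \Delta) \cup (\Gamma_2 \setminus \Delta)$,
\[
\rho(\gamma)\, \overline{\mathcal T^{(j)}} \ \subset \ \mathcal T^{(i)} \setminus \Omega_i,
\]
where $i$ is determined by $\delta_1 \in \Gamma_i$ and $j$ by $\delta_n \in \Gamma_{3-j}$. The base case $n=1$ is exactly Lemma~\ref{lem:ping}. The inductive step factors $\rho(\gamma) = \rho(\delta_1)\,\rho(\delta_2 \cdots \delta_n)$, uses the inductive hypothesis to place $\rho(\delta_2 \cdots \delta_n)\overline{\mathcal T^{(j)}}$ inside $\overline{\mathcal T^{(3-i)}}$, and applies Lemma~\ref{lem:ping} one more time to the factor $\delta_1 \in \Gamma_i \setminus \Delta$. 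The strict ``$\setminus \Omega_i$'' side of Lemma~\ref{lem:ping} is essential for the iteration to close up without losing separation.

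Both conclusions of the Lemma now follow. For injectivity, suppose two distinct tiles $[\gamma \Gamma_i]$ and $[\gamma' \Gamma_{i'}]$ have developed images whose interiors meet. After left-multiplying by $\rho(\gamma)^{-1}$, one tile is $\overline{\Omega_i}$ and the other is $\rho(\eta)\overline{\Omega_{i'}}$ with $\eta \notin \Gamma_i$; the reduced form of $\eta$ is nontrivial, so the inductive claim places $\rho(\eta)\overline{\Omega_{i'}}$ in $\mathcal T^{(m)} \setminus \Omega_m$ for some $m$, disjoint from the interior of $\overline{\Omega_i}$, a contradiction. At identified boundary points, the tree-adjacency of two vertices corresponds exactly to their developed images sharing a common principal triangle, so the identifications $\sim_1, \sim_2$ in $\widetilde N_{aug}$ match the face-sharing of developed tiles in $\RP^3$ via the equivariance of $dev$. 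For the containment in $\overline{(\Omega_1)_{max}}$: by Lemma~\ref{lem:convexdomain}(3), this set equals $\overline{(\Omega_1)_{min}}$ together with all principal tetrahedra $\rho(\alpha)\overline{\mathcal T^{(2)}}$ for $\alpha \in \Gamma_1/\Delta$. Every tile lands inside this union: if $\gamma \in \Gamma_1$ and $i=1$, the tile is $\overline{\Omega_1} \subset \overline{(\Omega_1)_{min}}$; otherwise the reduced form of $\gamma$ is nontrivial, and the inductive claim places the tile inside $\rho(\delta_1)\overline{\mathcal T^{(2)}}$ (a principal tetrahedron of $\Omega_1$, when $\delta_1 \in \Gamma_1$) or inside $\overline{\mathcal T^{(2)}}$ itself (when $\delta_1 \in \Gamma_2$, the principal tetrahedron at the identity coset).

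The main subtlety will be verifying the boundary behavior: ensuring that the identifications $\sim_1, \sim_2$ on $\widetilde N_{aug}$ along principal triangles match the face-sharing of developed tetrahedra in $\RP^3$ precisely along triangles, without any spurious identification at higher-codimension facets near the attracting fixed points. This is exactly what the strict ``$\setminus \Omega_i$'' clause of Lemma~\ref{lem:ping}, ultimately provided by Lemma~\ref{lem:proper-contain}, guarantees.
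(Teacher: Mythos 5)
Your argument is essentially the same as the paper's: a ping-pong induction iterating Lemma~\ref{lem:ping} along reduced words in the amalgamated product, with injectivity and the containment in $\overline{(\Omega_1)}_{max}$ both falling out of the resulting nested chain of tetrahedra. The Bass--Serre tree language you use is a mild re-packaging of the same combinatorics; the paper writes the induction directly on the alternating word. One small slip: you write ``$\overline{\Omega_1} \subset \overline{(\Omega_1)_{min}}$'' when you clearly mean ``$\overline{\Omega_1} \subset \overline{(\Omega_1)_{max}}$'' (the containment goes $\Omega_{min} \subset \Omega \subset \Omega_{max}$). Also, the mechanism preventing spurious higher-codimension identifications is not really the ``$\setminus \Omega_i$'' clause per se, but the sharper fact that Lemma~\ref{lem:ping} lands $\gamma_1 \overline{\mathcal T^{(2)}}$ inside the \emph{open} tetrahedron $\mathcal T^{(1)}$, hence away from $\partial \mathcal T^{(1)} \supset \overline{T}$; this is precisely what Lemma~\ref{lem:proper-contain} supplies, as you correctly identify even if the emphasis is slightly misplaced.
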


\begin{proof}
We already know that $dev$ is an embedding when restricted to the union of any two adjacent closed tiles. To show that $dev$ is a global embedding, it suffices to show that
$dev([\{\gamma\} \times \overline \Omega_1]) = \rho(\gamma)\overline \Omega_1$ does not intersect $\overline \Omega_{2}$ nor $\overline \Omega_1$ as long as $\gamma \notin \Gamma_1 \cup \Gamma_2$.

Assume that $\gamma \notin \Gamma_1 \cup \Gamma_2$. Then $\gamma$ may be expressed as an alternating product $\gamma = \gamma_1 \star \cdots \star \gamma_m$ of $m \geq 2$ elements $\gamma_i \in \Gamma_1 \cup \Gamma_2 \setminus \Delta$ such that for $i =1, \ldots, m-1$, $\gamma_i \in \Gamma_1$ if and only if $\gamma_{i+1} \in \Gamma_2$. There are two possibilities to consider. First, assume $\gamma_m$ lies in $\Gamma_2 \setminus \Delta$. Then by Lemma~\ref{lem:ping},
\begin{align*}
\rho(\gamma)\overline \Omega_1 = \gamma_1 \cdots \gamma_{m-1} \gamma_m \overline \Omega_1 &\subset \gamma_1 \cdots \gamma_{m-1} (\mathcal T^{(2)} \setminus \Omega_2)\\ & \subset \gamma_1 \cdots \gamma_{m-2} (\mathcal T^{(1)} \setminus \Omega_1) \\ &\vdots \\
&\subset (\mathcal T^{(j)} \setminus \Omega_j)
\end{align*}
where $j = 1$ if $\gamma_1 \in \Gamma_1$ (equivalently if $m$ is even) or $j=2$ if $\gamma_1 \in \Gamma_2$. Hence $\rho(\gamma) \overline \Omega_1$ does not intersect either $\overline \Omega_1$ or $\overline \Omega_2$. The other possibility is that $\gamma_m \in \Gamma_1 \setminus \Delta$.  In this case,  $\rho(\gamma)\overline \Omega_1 = \gamma_1 \cdots \gamma_{m-1} \overline \Omega_1$, with $\gamma_{m-1} \in \Gamma_2 \setminus \Delta$ and we proceed as in the previous case replacing $\gamma$ with $\gamma_1 \star \cdots \star \gamma_{m-1}$.
This completes the proof that $dev$ is injective on $\widetilde N_{aug}$. Indeed $dev$ is a closed map, so $dev$ is an embedding.

We may also see from the above ping-pong argument that the image of $dev$ is contained in $\overline{(\Omega_1)}_{max}$. For any $\gamma \in \Gamma_1$, we have that $\rho(\gamma) \overline \Omega_1 = \overline \Omega_1 \subset  \overline{(\Omega_1)}_{max}$ and $\rho(\gamma) \overline \Omega_2 \subset \rho(\gamma) \overline{\mathcal T^{(2)}} \subset \overline{(\Omega_1)}_{max}$.
If $\gamma \in \Gamma_2 \setminus \Delta$, then $\rho(\gamma) \overline\Omega_2 =\overline\Omega_2 \subset \overline{(\Omega_1)}_{max}$ and $\rho(\gamma) \overline \Omega_1 \subset \rho(\gamma) \overline{\mathcal T^{(1)}} \subset \mathcal T^{(2)} \subset  (\Omega_1)_{max}$.
Finally, let $\gamma \notin \Gamma_1 \cup \Gamma_2$. Then as above $\gamma = \gamma_1 \star \gamma_2 \star \cdots \star \gamma_m$ and $\rho(\gamma) \overline \Omega_1 \subset \mathcal T^{(2)} \subset (\Omega_1)_{max}$ if $\gamma_1 \in \Gamma_{2}$. If $\gamma_1 \in \Gamma_{1}$, then $\rho(\gamma_2 \star \cdots \star \gamma_m) \overline \Omega_1$ is contained in $\mathcal T^{(2)}$, so $\rho(\gamma_1) \rho(\gamma_2 \star \cdots \star \gamma_m)\overline \Omega_1 = \rho(\gamma) \overline \Omega_1$ is contained in $(\Omega_1)_{max}$. It follows similarly that if $\gamma\notin \Gamma_1\cup\Gamma_2$ then $\rho(\gamma)\overline\Omega_2 \subset (\Omega_1)_{max}$.
\end{proof}

Finally, we prove:

\begin{lemma}\label{lem:prop-convex}
The projective manifold $N$ is properly convex.
\end{lemma}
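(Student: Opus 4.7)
By Lemma~\ref{lem:pong}, the developing map $dev: \widetilde N \to \RP^3$ is an embedding whose image $\Omega := dev(\widetilde N)$ sits inside the properly convex open set $(\Omega_1)_{max}$, and hence in an affine chart. It therefore suffices to show that $\Omega$ is convex.

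The plan is to show that for any two points $p, q \in \Omega$, the projective line segment $[p,q]$ lies in $\Omega$. I would proceed by induction on the distance $n$ between the tiles $X_0 \ni p$ and $X_n \ni q$ in the Bass-Serre tree $\mathscr T$ of the amalgamated product $\Gamma = \Gamma_1 *_\Delta \Gamma_2$, whose vertices are the tiles $\rho(\gamma) \Omega_i$ and whose edges correspond to shared principal triangles. The key geometric input is the nested ping-pong of Lemma~\ref{lem:ping}: each tile $X$ sits inside its principal tetrahedron $\mathcal T_X$, and for adjacent tiles $X_0, X_1$ sharing a principal triangle $T_0$ with supporting plane $H_0$, iterating the ping-pong shows that every tile $X_1, X_2, \ldots$ further along the tree through $X_1$ is confined to $\mathcal T_{X_1}$, which lies on the opposite side of $H_0$ from $\mathcal T_{X_0}$.

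The inductive step then runs as follows. With $p \in X_0 \subset \mathcal T_{X_0}$ and $q \in X_n \subset \mathcal T_{X_1}$, both $p$ and $q$ lie in the convex triangular prism $U_0 := \mathcal T_{X_0} \cup T_0 \cup \mathcal T_{X_1}$; hence $[p,q] \subset U_0$ meets $H_0$ at a unique point $r_0 \in U_0 \cap H_0 = T_0$. Convexity of the tile $X_0$ gives $[p, r_0] \subset \overline{X_0}$, and the inductive hypothesis applied to $[r_0, q]$ at tree-distance $n - 1$ gives $[r_0, q] \subset \bigcup_{i=1}^n \overline{X_i}$. Concatenating yields $[p,q] \subset \bigcup_{i=0}^n \overline{X_i}$; the base case $n = 0$ is just the convexity of a single tile.

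The hard part will be ensuring that this segment actually lies entirely in $\Omega$, rather than passing through non-principal boundary points or unglued (boundary) principal triangles of some $\overline{X_i}$, which belong to $\partial N$ rather than to its interior. For the root tile $\Omega_1$, Lemma~\ref{lem:proper-contain} places the non-principal boundary on $\partial (\Omega_1)_{max}$, and $[p,q]$ avoids $\partial (\Omega_1)_{max}$ since it is a chord of the open convex set $(\Omega_1)_{max}$. For deeper tiles, the non-principal boundary may a priori sit in the interior of $(\Omega_1)_{max}$, so to complete this step uniformly I would first construct the maximal $\rho(\Gamma)$-invariant properly convex domain $\Omega_{max}$ for the glued structure (following Section~\ref{boundarygeom}, as the dual of the interior of the convex hull of the $\rho(\Gamma)$-limit set in $\RP^{3*}$), in which \emph{all} non-principal tile boundaries and all unglued principal triangles lie on $\partial \Omega_{max}$. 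The same ping-pong argument used in Lemma~\ref{lem:pong} then gives $\Omega \subset \Omega_{max}$, and since $[p,q]$ is a chord of the open convex set $\Omega_{max}$, it avoids $\partial \Omega_{max}$ and hence all the problematic boundary points, giving $[p,q] \subset \Omega$ as desired.
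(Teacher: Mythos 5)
Your approach is genuinely different from the paper's, and it contains a gap that is not patched by the proposed fix.

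The paper's proof is a local-to-global convexity argument. It observes that the restriction of the augmented developing map to any connected union of \emph{finitely many closed tiles} is an embedding of a compact set into the affine chart $\mathbb A$, and that this set is locally convex at all its boundary points (at an unglued boundary point it looks like a neighborhood in $\partial\overline{\Omega_i}$, and at a glued principal triangle two tiles meet flush along a shared supporting hyperplane). By the classical fact that a compact connected locally convex subset of affine space is convex (Tietze--Nakajima), each finite union of closed tiles has convex image; an increasing union of convex sets is convex; and $dev(\widetilde N)$ is the interior of $dev(\widetilde N_{aug})$, hence convex, and properly convex since it sits inside $\overline{(\Omega_1)}_{max}$. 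This sidesteps any explicit analysis of where a given chord goes.

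Your plan instead runs an induction along the Bass--Serre tree using the nested ping-pong from Lemma~\ref{lem:ping}. That is a sensible and geometrically transparent alternative, and you correctly identify the genuine difficulty: the induction as stated only puts $[p,q]$ in $\bigcup_i \overline{X_i}$, and one must rule out the chord touching non-principal tile boundary or an unglued principal triangle, both of which are \emph{not} in $\Omega$. However, the proposed fix is circular. The construction of $\Omega_{max}$ in Section~\ref{boundarygeom} takes as input a properly convex open $\rho(\Gamma)$-invariant domain $\Omega$ (it dualizes $\Omega$, locates the dual limit set inside $\partial\Omega^*$, takes the convex hull, and dualizes back); without already knowing that the glued holonomy preserves such a domain --- which is precisely what Lemma~\ref{lem:prop-convex} asserts --- there is no reason the ``dual of the interior of the convex hull of the dual limit set'' is a well-defined properly convex open set, let alone one whose boundary contains all the non-principal tile boundaries. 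The containment $dev(\widetilde N_{aug})\subset\overline{(\Omega_1)}_{max}$ from Lemma~\ref{lem:pong} is not enough: $(\Omega_1)_{max}$ is $\Gamma_1$-invariant but not $\Gamma$-invariant, so it cannot be used directly as the seed for the Section~\ref{boundarygeom} construction either.

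The tree induction \emph{can} be repaired without invoking $\Omega_{max}$, but it takes more care than you allot. One must track the crossing points $r_0,\dots,r_{n-1}$, observe that each $r_i$ lies in the \emph{open} principal triangle $T_i$ (because the segment lives in the open triangular prism $U_i$ and meets its middle wall in $U_i\cap H_i=T_i$), and then argue that each intermediate segment $[r_{i},r_{i+1}]$ has endpoints in two \emph{distinct} open boundary faces of $\overline{X_{i+1}}$, so its relative interior lies in the open set $X_{i+1}$; similarly $[p,r_0]\subset X_0\cup T_0$ and $[r_{n-1},q]\subset T_{n-1}\cup X_n$. One also needs to restate the inductive hypothesis to allow the initial point to lie on a principal triangle rather than in a tile interior. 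None of this is wrong in spirit, but it is precisely the bookkeeping the paper's local-to-global argument is designed to avoid.
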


To prove Lemma~\ref{lem:prop-convex}, we will need the following basic result about convex sets in Euclidean space. It is similar to a well-known theorem of Nakajima~\cite{1928227} and Tietze~\cite{Tietze1928}. We include a proof for convenience.
\begin{lemma}\label{lem:classical}
Suppose $A, B \subset \RR^d$ are closed convex subsets with non-empty interior and non-empty intersection. 
Suppose that $A \cup B$ satisfies the following local convexity condition along $C = \partial(A \cup B) \cap (A \cap B)$: 
At each point $z \in C$ there is a local support plane, i.e. a hyperplane containing $z$ and bounding a closed half-space that contains a neighborhood of $z$ in $A \cup B$. Suppose further there is a point of $A \cap B$ in the interior of $A \cup B$. Then $A \cup B$ is convex.
\end{lemma}
\begin{proof} 
We consider first the interior $\mathrm{Int}(A \cup B)$. Fix a point $x \in \mathrm{Int}(A)$ and let $S$ be the set of all $y \in \mathrm{Int}(A \cup B)$ such that $[x,y] \subset \mathrm{Int}(A \cup B)$. $S$ is clearly an open subset of  $\mathrm{Int}(A \cup B)$. We show $S$ is also closed in $\mathrm{Int}(A \cup B)$. Suppose $y_n \in S$ converges to $y \in \mathrm{Int}(A \cup B)$ but $y \notin S$. The open interval $(x,y)$ is contained in $A \cup B$ and intersects $\partial (A \cup B)$ in at least one point~$z$. Let $H$ be a hyperplane supporting $A \cup B$ locally near $z$, guaranteed to exist by the local convexity assumption if $z \in C$ or by the convexity of $A$ (resp. of $B$) if $z \in \partial A \setminus C$ (resp. if $z \in \partial B \setminus C$). Since $(x,y)$ does not cross $H$ transversely at $z$, we must have $[x,y] \subset H$. However, this is a contradiction: if $z \in A$, the local support plane $H$ does not intersect the convex set $\mathrm{Int}(A)$ so it can not contain $x$, and if $z \in B$, then $H$ does not intersect $\mathrm{Int}(B)$ so it does not contain $y$. Hence $S$ is closed in $\mathrm{Int}(A \cup B)$.
We note that $\mathrm{Int}(A \cup B)$ is connected, since by assumption a point of $A \cap B$ is contained in $\mathrm{Int}(A \cup B)$. Hence $S = \mathrm{Int}(A \cup B)$. Hence for all $x \in \mathrm{Int}(A)$ and $y \in \mathrm{Int}(A \cup B)$, $[x,y] \subset \mathrm{Int}(A \cup B)$ and similarly for $x \in \mathrm{Int}(B)$ by symmetry. It now follows by taking limits that $A \cup B$ is convex.
\end{proof}

\begin{proof}[Proof of Lemma~\ref{lem:prop-convex}]
By Lemma~\ref{lem:pong}, the augmented developing map is an embedding into an affine chart $\mathbb A$ of $\RP^3$.
The image of each tile of $\widetilde N_{aug}$ is convex.
The union of two adjacent tiles is locally convex at the boundary of the interface between them. To check this, it suffices to examine $\overline\Omega_1 \cup \overline\Omega_2$. We may assume that $\overline{U^{(1)}} = \overline{\mathcal T^{(1)}_+} \cup \overline{\mathcal T^{(1)}_-}$ intersects the affine chart $\mathbb A$ in an infinite triangular prism (with vertex at infinity). Since $\overline\Omega_1 \cup \overline\Omega_2$ is contained in  $\overline{U^{(1)}}$ and since $\partial (\overline\Omega_1 \cup \overline\Omega_2) \cap (\overline\Omega_1 \cap \overline\Omega_2) = \partial \overline T $ is contained in $\partial \overline{U^{(1)}}$, it follows that $\overline\Omega_1 \cup \overline\Omega_2$ is locally convex at the boundary of the interface $\overline T$  between the two tiles: one of the three planes bounding $\overline{U^{(1)}}$ supports $\overline\Omega_1 \cup \overline\Omega_2$ at each point of $\partial T$ .
The union $\overline\Omega_1 \cup \overline\Omega_2$ is convex by Lemma~\ref{lem:classical}.
Next, since no three of the closed tiles of $\widetilde N_{aug}$ meet non-trivially, it follows by induction and  Lemma~\ref{lem:classical} that the image under the augmented developing map of any finite connected union of tiles of $\widetilde N_{aug}$ is convex. 
 The image of $\widetilde N_{aug}$ is an increasing union of such convex sets and hence is convex. 
 The image of $\widetilde N$, which is the interior of the image of $\widetilde N_{aug}$, is therefore also convex, and indeed properly convex because it is contained in $\overline{(\Omega_1)}_{max}$.
\end{proof}

This completes the proof Lemma~\ref{lem:glue-different}.  The proof of Lemma~\ref{lem:glue-same} is nearly the same and so we include only the following sketch which highlights the required modifications.
\begin{enumerate}
\item Possibly after modifying $g$ by a reflection in the centralizer $Z(\Delta_2)$, we may assume that $\Omega$ and $g \Omega$ are positioned so that $g \mathcal T^{(1)}_\mp = \mathcal T^{(2)}_\pm$, where for both $i \in \{1,2\}$, $\mathcal T^{(i)}_+ \supset \Omega$ and $\mathcal T^{(i)}_-$ is the principal tetrahedron for $\Omega$ preserved by $\Delta_i$ as above.
 \item The fundamental group of $M_f$ is the HNN extension $\Gamma\ast_{f_\ast}$. The universal cover of $M_f$ is combinatorially a union of tiles $\widetilde M_f = \Gamma\ast_{f_\ast} \! \times  \Omega_{\partial} \,/\sim$, where $\Omega_{\partial}$ is the union of $\Omega$ and the principal triangles covering the boundary component $\partial_1$ and $\partial_2$ and the equivalence relation is generated by $(\gamma,p)\sim(\gamma\gamma_1^{-1},\gamma_1 p)$, for all $\gamma_1\in \Gamma$.
 \item The developing map $dev$ for the natural projective structure on $M_f$ is defined by the formula $dev([\gamma, p]) = \rho(\gamma) p$, for any $\gamma \in \Gamma\ast_{f_*}$ and $p \in \Omega_{\partial}$, where $\rho \co \Gamma\ast_{f_*} \to \tilde G$, the holonomy representation, is defined by the property that its restriction to $\Gamma$ is the inclusion map and $\rho$ applied the stable letter of $\Gamma\ast_{f_*}$ is the gluing transformation $g$. The developing map $dev$ extends to the augmented domain $\widetilde M_{f, aug} = \Gamma\ast_{f_*}\! \times \overline{\Omega} \, / \sim$.
 \item  The following analogue of Lemma \ref{lem:ping} holds: 
 \begin{itemize}
  \item $g\mathcal{T}^{(1)}_{\mp} =\mathcal{T}^{(2)}_\pm$  (already arranged above)
  \item For any $i\in \{1,2\}$ and $\gamma\in \Gamma\setminus \Delta_i$, $\gamma\overline{\mathcal{T}^{(i)}_-}\subset \mathcal{T}^{(i)}_+\backslash \Omega$ (follows immediately because $\Omega_{max}$ contains $\gamma \mathcal T^{(i)}_-$ and is contained in the union $U^{(i)} = \mathcal T^{(i)}_+ \cup T^{(i)} \cup \mathcal T^{(i)}_-$.)
 \end{itemize}
\item Plugging (4) into the argument from Lemma~\ref{lem:pong} shows that $dev$ is an embedding with image contained in $\overline{\Omega}_{max}$. Finally, as in Lemma~\ref{lem:prop-convex}, the image of $\widetilde M_{f,aug}$ under $dev$ is locally convex showing that the projective structure on $M_f$ is properly convex.
\end{enumerate}
\begin{proof}[Proof of Theorem~\ref{thm:convex-gluing}]
The theorem follows by induction by applying Lemmas~\ref{lem:glue-different} and~\ref{lem:glue-same} in succession to glue together the given collection of convex projective manifolds along various pairs of boundary components using the given gluing maps in any order.
\end{proof}
Suppose we are given a collection of cusped hyperbolic manifolds, along with gluing homeomorphisms between some of their torus boundary components. In general, it is not known whether projective structures with totally geodesic boundary satisfying the appropriate holonomy matching hypotheses~\eqref{eqn:matching} of Theorem~\ref{thm:convex-gluing} can be found. To find such glueable structures would seem to require a description of the global deformation space of properly convex projective structures with principal totally geodesic boundary on a given manifold. As of the writing of this article, there is no cusped hyperbolic manifold for which this global deformation space has been computed, abstractly nor computationally (although there are certain Coxeter orbifolds for which the deformation space has been computed, see Section~\ref{reflectionorbs}).
 Nonetheless, the matching condition is automatically satisfied in the case of doubling a convex projective manifold with principal totally geodesic boundary. Indeed:
\begin{proof}[Proof of Theorem~\ref{thm:main}]
 Since $M$ is assumed to be infinitesimally projectively rigid rel  boundary, Theorem~\ref{thm:deform} tells us that we can deform the complete hyperbolic structure on $M$ to a convex projective structure where each boundary component is a principal totally geodesic torus. The gluing condition \eqref{eqn:matching} is trivially satisfied for the identity gluing, and thus Theorem \ref{thm:convex-gluing} ensures that the double $2M$ admits a properly convex projective structure.
\end{proof}

 \begin{figure}[ht!]
    \centering
    \subfloat[]{\includegraphics[scale=.3]{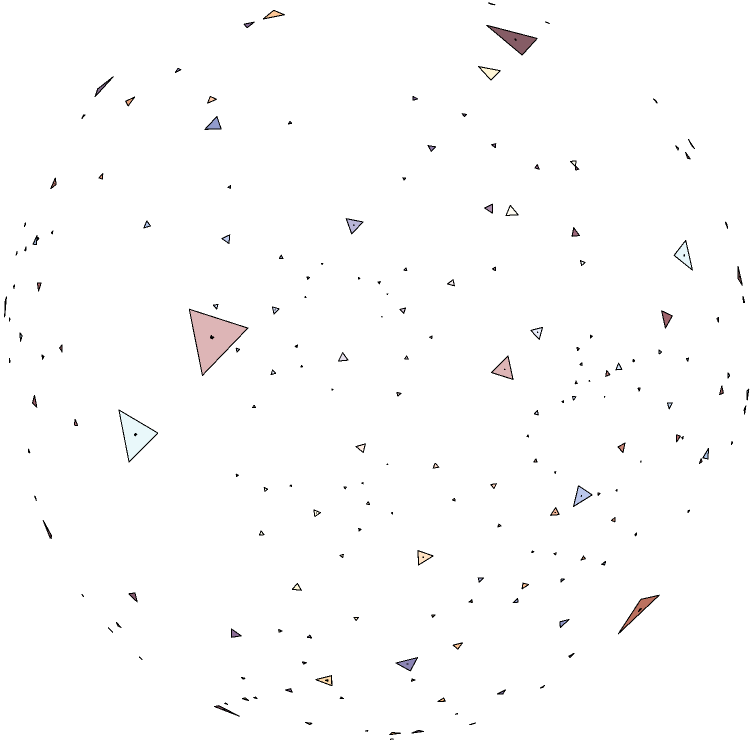}}
    \quad\;
    \subfloat[]{\includegraphics[scale=.3]{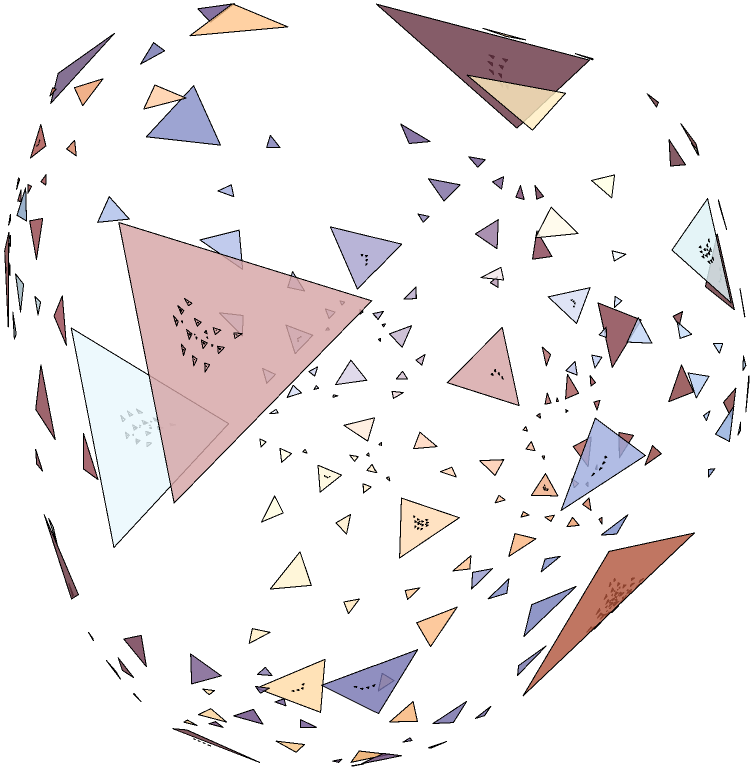}}

    \subfloat[]{\includegraphics[scale=.3]{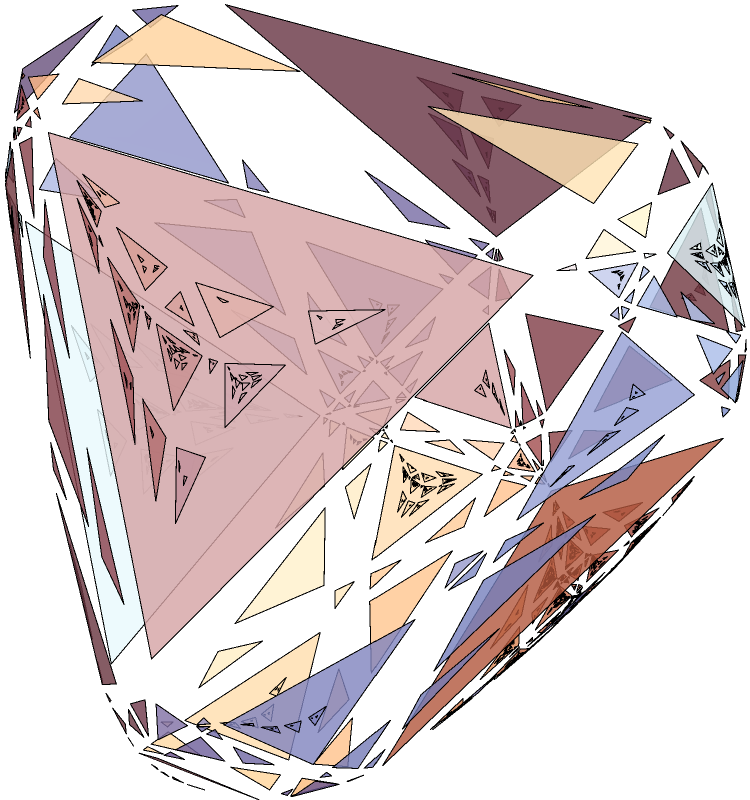}}
    \quad\;
    \subfloat[]{\includegraphics[scale=.3]{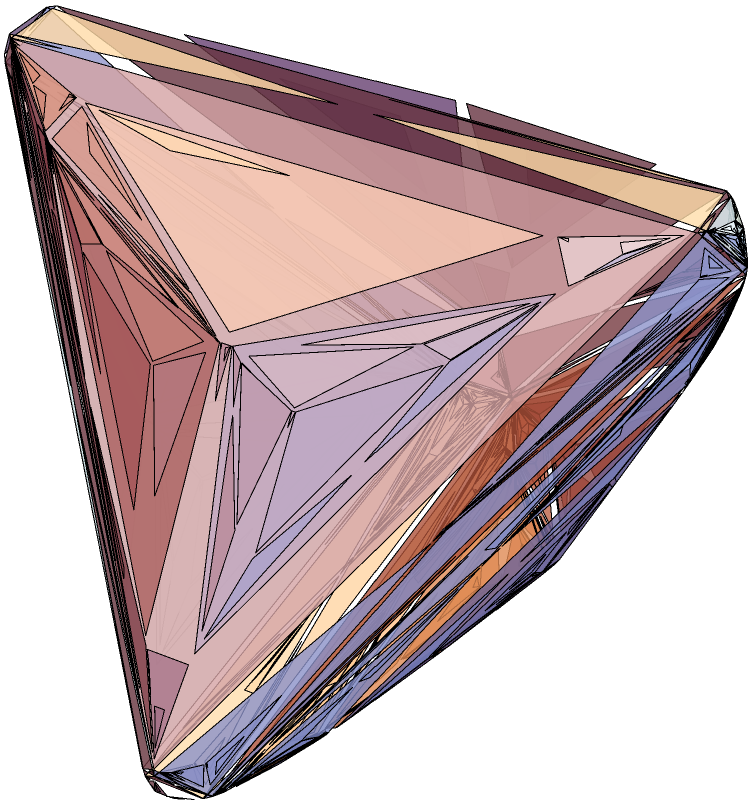}}
    \caption{A collection of properly embedded triangles in the convex domains defining convex projective structures on the double of the figure eight knot complement. Going from (A) to (D), the projective structure on each of the two pieces deforms away from the complete hyperbolic structure. }
    \label{fig:omegadouble}
\end{figure}

%%%%%%%%%%%%%%%%%%%%%%%%%%
%
%
%
%%%%%%%%%%%%%%%%%%%%%%%%%%
\section{Gluing covering manifolds of reflection $3$--orbifolds}\label{reflectionorbs}

We conclude the paper by giving examples of properly convex projective structures on non-hyperbolic manifolds which are not obtained by doubling. To do this, we apply the Convex Gluing Theorem~\ref{thm:convex-gluing} to a collection of highly symmetric properly convex projective structures with principal totally geodesic boundary. The symmetry of these structures, which are covers of certain convex projective reflection orbifolds, greatly restricts their boundary holonomy groups. This allows us to determine which combinations of these manifolds admit structures satisfying the matching hypothesis~\eqref{eqn:matching} of Theorem~\ref{thm:convex-gluing}.

\subsection{Euclidean hex tori}
We will be interested in three-dimensional hyperbolic reflection orbifolds with cusps isomorphic to the Euclidean $(3,3,3)$--triangle orbifold. We now recall the geometric features of this two-orbifold and describe the geometry of its torus covers.

Let $\Delta_{3,3,3}$ denote the subgroup of the isometries $\mathrm{Isom}\, \mathbb{E}^2$ of the Euclidean plane generated by reflections in the sides of an equilateral triangle $T$ in the Euclidean plane. We denote the resulting quotient orbifold  by $\mathcal S_{3,3,3} = \Delta_{3,3,3} \backslash \mathbb{E}^2$.

If $\Delta_H$ denotes the subgroup of all translations in $\Delta_{3,3,3}$, then $\Delta_H$ is the maximal torsion-free subgroup of $\Delta_{3,3,3}$ and all other torsion-free subgroups $\Delta$ of $\Delta_{3,3,3}$ are contained in $\Delta_H$. In other words, the torus $\mathcal S_H = \Delta_H \backslash \mathbb E^2$ is the minimal torus cover of $\mathcal S_{3,3,3}$, and every orientable surface $\mathcal S = \Delta \backslash \mathbb{E}^2$ covering $\mathcal S_{3,3,3}$ is a Euclidean torus which may be decomposed into regular hexagons each made up of six copies of the equilateral triangle $T$. We call such a torus a \emph{Euclidean regular hex torus}, or just \emph{hex torus} for short. The fundamental group $\Delta$ of a Euclidean regular hex torus $\mathcal S$ is called a \emph{hex torus group}.

Let $k$ be a natural number and let $\Delta'$ be a hex torus group. Then the group $\Delta = k \Delta'$ of all $k^{th}$ powers of elements of $\Delta'$ is a subgroup of $\Delta'$. If $\mathcal S'$ and $\mathcal S$ are the hex tori associated to $\Delta'$ and $\Delta$, then there is a natural homeomorphism, denoted by $k \co \mathcal S' \to \mathcal S$ which, in the universal cover $\mathbb E^2$, simply scales by $k$.

\begin{definition}
Let $\mathcal S_1, \mathcal S_2$ be hex tori corresponding to the subgroups $\Delta_1, \Delta_2 \subset \Delta_H$.
Then a homeomorphism $f \co \mathcal S_1 \to \mathcal S_2$ is said to be a \emph{lattice $\mathbb Q$--isometry} or a \emph{lattice $\frac{k_2}{k_1}$--isometry} if there exists a hex torus $\mathcal S'$ with associated group $\Delta'$ and natural numbers $k_1, k_2$ such that $\Delta_1 = k_1 \Delta'$, $\Delta_2 = k_2 \Delta'$, and so that the map $f' = k_2^{-1} f k_1 \co \mathcal S' \to \mathcal S'$ is induced by conjugation by an element of $\Delta_{3,3,3}$.
When $k_1 = k_2 =1$, we call $f$ a \emph{lattice isometry}.
\end{definition}

We now define an invariant, called the \emph{hex shape equivalence class} which may be used to determine when a lattice $\mathbb Q$--isometry exists between two hex tori.
  Let $v_1, v_2$ be a pair of vectors of minimal length such that the translations $t_{v_1}$ and $t_{v_2}$ generate $\Delta_H$. Assume further that the angle between $v_1$ and $v_2$ is equal to $2\pi / 3$. Then $v_1, v_2$ are unique up to the action of the dihedral group $D_6$ of order $12$, generated by the order six rotation $\begin{pmatrix} v_1 \\ v_2 \end{pmatrix} \mapsto \begin{pmatrix} 0 & -1\\ 1 & 1 \end{pmatrix} \begin{pmatrix} v_1 \\ v_2 \end{pmatrix}$ and the reflection $\begin{pmatrix} v_1 \\ v_2 \end{pmatrix} \mapsto \begin{pmatrix} 1 & 0\\ -1 & -1 \end{pmatrix} \begin{pmatrix} v_1 \\ v_2 \end{pmatrix}$.
Let $w_1 = p_{11} v_1 + p_{12} v_2$ and $w_2 = p_{21} v_1 + p_{22} v_2$ be an ordered pair of vectors such that the translations $t_{w_1}$ and $t_{w_2}$ generate a hex torus group~$\Delta$ with corresponding hex torus denoted $\mathcal S$.
Then we can encode the shape of $\Delta$ with the $2 \times 2$ integer matrix $A_\Delta$ appearing in the equation $$\begin{pmatrix} w_1\\ w_2\end{pmatrix} = \begin{pmatrix} p_{11} & p_{12}\\ p_{21} & p_{22} \end{pmatrix} \begin{pmatrix} v_1\\ v_2\end{pmatrix}.$$
The matrix $A_{\Delta}$ is only well-defined up to multiplication on the left by $\SL^{\pm}_2 \mathbb Z$ and multiplication on the right by the matrices of the dihedral group $D_6$ above. We call the equivalence class of matrices $A_{\Delta}$ the \emph{hex cusp shape} of $\mathcal S$ (or of $\Delta$). The following is elementary.

\begin{lemma}\label{lem:hex-shape}
Let $\Delta_1, \Delta_2 \subset \Delta_{3,3,3}$ be the fundamental groups of two regular Euclidean hex tori $\mathcal S_1, \mathcal S_2$. Let $A_{\Delta_1}$ and $A_{\Delta_2}$ be matrices representing the respective hex cusp shapes of $\mathcal S_1$ and $\mathcal S_2$. Then there exists a lattice $\frac{k_2}{k_1}$--isometry $f \co \mathcal S_1 \to \mathcal S_2$ if and only if $k_2 [A_{\Delta_1}] = k_1 [A_{\Delta_2}]$, \ie if there exists $B \in \SL^{\pm}_2 \mathbb Z$ and $C \in D_6$ such that $k_2 A_{\Delta_1} = k_1 B A_{\Delta_2} C$.
\end{lemma}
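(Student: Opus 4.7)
The plan is to unwind both sides of the biconditional through a common intermediate lattice $\Delta' \subset \Delta_H$ satisfying $\Delta_1 = k_1 \Delta'$ and $\Delta_2 = k_2 \Delta'$. Since both conditions depend only on the ratio $k_2/k_1$, I will assume throughout that $\gcd(k_1, k_2) = 1$. The key observation is that if $u_1, u_2$ are minimal generators of such a $\Delta'$ with $[u_1, u_2]^T = A_{\Delta'}[v_1, v_2]^T$, then $k_i u_1, k_i u_2$ generate $\Delta_i$, so $k_i A_{\Delta'}$ is a valid representative of $[A_{\Delta_i}]$.

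For the forward direction, I would extract the witness $\Delta'$ directly from the definition of a lattice $\frac{k_2}{k_1}$-isometry. The observation above then gives $k_2 A_{\Delta_1} = k_1 k_2 A_{\Delta'} = k_1 A_{\Delta_2}$ between these chosen representatives, yielding $k_2[A_{\Delta_1}] = k_1[A_{\Delta_2}]$. Note that this direction uses only the existence of $\Delta'$, not the conjugation condition imposed on $f'$.

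For the converse, I start from an equality $k_2 A_{\Delta_1} = k_1 B A_{\Delta_2} C$ with $B \in \SL^{\pm}_2 \mathbb{Z}$ and $C \in D_6$. Coprimality of $k_1, k_2$ forces $k_1$ to divide every entry of $A_{\Delta_1}$, so $A_{\Delta'} := \frac{1}{k_1}A_{\Delta_1}$ is an integer matrix; I take $\Delta' \subset \Delta_H$ to be the hex sublattice it defines. Then $k_1 \Delta' = \Delta_1$ by construction, and the hypothesis rewrites as $k_2 A_{\Delta'} = B A_{\Delta_2} C$; since left multiplication by $B$ corresponds to an $\SL^{\pm}_2 \mathbb{Z}$-change of generators of $\Delta_2$ and right multiplication by $C$ to a $D_6$-change of basis of $\Delta_H$, both of which preserve the underlying lattice, this gives $k_2 \Delta' = \Delta_2$. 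Finally, the map $f$ is assembled as $f := k_2 \circ f' \circ k_1^{-1}$, where $f' : \mathcal{S}' \to \mathcal{S}'$ is the quotient homeomorphism induced by any $\delta \in \Delta_{3,3,3}$ whose image under the natural surjection $\Delta_{3,3,3} \to \Delta_{3,3,3}/\Delta_H \cong D_6$ is the prescribed element realizing $C$.

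The only mildly delicate point is the coprime reduction at the start: the identity $k_2 A_{\Delta_1} = k_1 B A_{\Delta_2} C$ does not by itself force $k_1$ to divide $A_{\Delta_1}$ when $\gcd(k_1,k_2) > 1$ (already visible in the trivial case $k_1=k_2$, $\Delta_1=\Delta_2 \not\subset k_1 \Delta_H$). Since both sides of the biconditional depend only on the ratio $k_2/k_1$ — the left via the area-scaling factor $(k_2/k_1)^2$ of the underlying isometry of $\mathbb{E}^2$, the right by inspection of the integer equation — this is harmless and is handled by passing to coprime representatives of the ratio. Everything else in the argument is straightforward bookkeeping of the $\SL^{\pm}_2 \mathbb{Z} \times D_6$ ambiguity in the shape representatives.
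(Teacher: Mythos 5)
The paper offers no proof of this lemma --- it is introduced only with the remark ``The following is elementary'' --- so there is no argument to compare against; I can only assess your proof on its own. Your forward direction is fine: extracting $\Delta'$ from the definition and observing that $k_i A_{\Delta'}$ is a valid representative of $[A_{\Delta_i}]$ gives $k_2 A_{\Delta_1} = k_1 k_2 A_{\Delta'} = k_1 A_{\Delta_2}$ immediately, and you are right that the conjugating element plays no role here.

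The converse has a real gap at the step where you assert that $k_2 A_{\Delta'} = B A_{\Delta_2} C$ ``gives $k_2\Delta' = \Delta_2$'' because $B$ and $C$ ``preserve the underlying lattice.'' Left multiplication by $B\in\SL^\pm_2\ZZ$ does preserve the lattice (it only changes the generating set), but right multiplication by $C\in D_6$ does not: it rewrites $\Delta_2$ with respect to a $D_6$-rotated/reflected copy of the reference basis $(v_1,v_2)$, and the rows of $B A_{\Delta_2}C$ span the $D_6$-image of $\Delta_2$ inside $\Delta_H$, which is in general a different sublattice. So the equality of matrices tells you only that $k_2\Delta'$ and $\Delta_2$ are $D_6$-\emph{equivalent} sublattices of $\Delta_H$, not that they coincide. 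This is exactly why the $\delta\in\Delta_{3,3,3}$ cannot be tacked on at the end as you do --- if $k_2\Delta' = \Delta_2$ were literally true you could take $\delta = 1$, so the subsequent construction of a nontrivial $\delta$ ``realizing $C$'' contradicts what you just claimed. The conjugating element is what must do the work of carrying one lattice to the other, and your write-up does not actually carry this out.

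There is also a factual slip: $\Delta_{3,3,3}/\Delta_H$ is not the dihedral group of order $12$. The $(3,3,3)$ Euclidean triangle reflection group is the wallpaper group $p3m1$, which has no $6$-fold rotation; $\Delta_H$ has index $6$ and the point group $\Delta_{3,3,3}/\Delta_H$ is $D_3$ of order $6$, a proper subgroup of the $D_6$ appearing in the definition of the hex cusp shape. Consequently a general $C\in D_6$ is \emph{not} realized by conjugation in $\Delta_{3,3,3}$. This part is fixable: the central element $-I\in D_6$ acts trivially on sublattices of $\Delta_H$, one has $D_6 = D_3\cdot\{\pm I\}$ as an internal direct product, and replacing $(B,C)$ by $(-B,-C)$ (with $-B\in\SL^\pm_2\ZZ$, $-C\in D_6$) leaves the matrix equation unchanged --- so one may assume $C\in D_3$ and then choose $\delta$ mapping to it. But you would still need to combine that corrected $\delta$ with the $D_6$-equivalence identified above to exhibit the map $f$ demanded by the definition, and that assembly is the missing content of the converse.
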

For example, if $\Delta_1 = \langle 5 v_1 - 5 v_2, 5 v_1 + 5 v_2 \rangle$ and $\Delta_2 = \langle  8 v_1, 4 v_2 \rangle$, then representatives of the hex cusp shape of the corresponding hex tori $\mathcal S_1$ and $\mathcal S_2$ are given by $A_{\Delta_1} = \begin{pmatrix} 5 & -5\\ 5 & 5 \end{pmatrix}$ and $A_{\Delta_2} = \begin{pmatrix} 8 & 0\\ 0 & 4 \end{pmatrix}$. There exists a lattice $\frac{4}{5}$--isometry $\mathcal S_1 \to \mathcal S_2$ because $$4 A_{\Delta_1} = 5 \begin{pmatrix} 1& 1\\ 0 & 1 \end{pmatrix}A_{\Delta_2} \begin{pmatrix} 0 & -1\\ 1 & 1 \end{pmatrix}.$$

\begin{remark}\label{rem:finitely-many}
Given a hex torus $\mathcal S$, the \emph{area} $a$ of $\mathcal S$, defined as the index of the covering $\mathcal S \to \mathcal S_H$, is simply the absolute value of the determinant of any matrix in the hex cusp shape. It follows from the Lemma that if $\mathcal S_1$ and $\mathcal S_2$ are lattice $\frac{k_2}{k_1}$--isometric, then their respective areas $a_1$ and $a_2$ satisfy $k_2^2 a_1 = k_1^2 a_2$. Therefore the ratio $a_2/a_1$ is a square, and the conformal factor $\frac{k_2}{k_1}$ of the $\mathbb Q$--isometry is given by $\sqrt{a_2/a_1}$.
Further, there are only finitely many equivalence classes of hex tori whose area is at most $a$. Hence, it is quite easy in practice to determine whether a lattice $\mathbb Q$--isomorphism $\mathcal S_1 \to \mathcal S_2$ exists as long as the areas of $\mathcal S_1$ and $\mathcal S_2$ are small. If one exists, then at most $12 = |D_6|$ exist.
\end{remark}

\subsection{Projective hex tori}
The deformation space $\mathfrak{C}(\mathcal S_{3,3,3})$ of marked convex real projective structures on the triangle orbifold $\mathcal S_{3,3,3}$ is homeomorphic to $\mathbb{R}$ and we now briefly describe this correspondence (see Goldman~\cite{Gol} and Vinberg~\cite{Vin} for more details). Let $r_1, r_2, r_3$ denote the reflections in the three sides of the triangle $T$, generating $\Delta_{3,3,3}$.
We identify the triangle $T$ with the positive octant in the projective $2$--sphere $\SS^2 = (\RR^3\setminus\{0\})/\RR^+$, which is the interior of the convex hull of three directions $e_1 = (1,0,0)$, $e_2 = (0,1,0)$, $e_3 = (0,0,1)$ of $\SS^2$. For each $\tau\in \RR$, let $s=e^{\tau/3}$, and define the representation $\zeta_\tau \co \Delta_{3,3,3} \to \SL^\pm_3 \RR$ by:
$$ \zeta_\tau(r_1) =\begin{pmatrix}
-1 & 0 & 0 \\
s & 1 & 0 \\
\frac{1}{s} & 0 & 1\\
\end{pmatrix},\quad
\zeta_\tau(r_2) =\begin{pmatrix}
1 & \frac{1}{s} & 0 \\
0 & -1 & 0 \\
0 & s & 1\\
\end{pmatrix},\quad
\zeta_\tau(r_3) =\begin{pmatrix}
1 & 0 & s \\
0 & 1 & \frac{1}{s} \\
0 & 0 & -1\\
\end{pmatrix}.
$$
For each $\tau$, these three elements are projective reflections in the sides of $T$ defining a representation $\zeta_\tau$ which is discrete and injective. The union of tiles, $\Omega_{\tau}= \cup_{\gamma \in \Delta_{3,3,3}} \zeta_{\tau}(\gamma) \overline{T}$, is a convex open domain in $\SS^2$. When $\tau = 0$, the convex domain $\Omega_{0}$ is the affine chart defined by $x_1 + x_2 + x_3 > 0$, where $(x_1, x_2, x_3)$ are coordinates with respect to the standard basis, and the representation $\zeta_0$ preserves a Euclidean metric on this affine chart; we identify $\Omega_0$ with the Euclidean plane $\mathbb E^2$ and think of $\zeta_0$ as the inclusion into $\operatorname{Isom}\mathbb E^2 \subset \SL^\pm_3 \RR$.
 For $\tau \neq 0$, $\Omega_{\tau}$ is the interior of the convex hull of the directions of the three vectors
$$l_1(\tau)=\epsilon_\tau(-1,s,0), \quad l_2(\tau)=\epsilon_\tau(0,-1,s) \quad \textrm{and} \quad l_3(\tau)=\epsilon_\tau(s,0,-1),$$
where $\epsilon_\tau=\tau/|\tau|$; see Figure \ref{hexTorus}. Any $\zeta_\tau$--equivariant homeomorphism of the universal cover $\widetilde S_{3,3,3} = \mathbb E^2$ with $\Omega_\tau$ is a developing map for the unique convex projective structure on $\mathcal S_{3,3,3}$ associated to the representation $\zeta_\tau$.
Of course, for any regular hex torus $\mathcal S$ covering $\mathcal S_{3,3,3}$ and for any $\tau \in \RR$, we obtain a convex projective structures on $\mathcal S$ by pullback. If $\Delta \subset \Delta_{3,3,3}$ denotes the fundamental group of $\mathcal S$, then the convex projective torus $\zeta_\tau(\Delta) \backslash \Omega_\tau$ is called a \emph{convex projective hex torus}; indeed, such a torus may be decomposed into ``regular'' hexagons, each of which is the union of six copies of the fundamental triangle $T$.

\begin{figure}[ht!]
\labellist
\tiny\hair 2pt
\pinlabel $l_1$ at 455 0
\pinlabel $l_2$ at 415 508
\pinlabel $l_3$ at -3 215
\pinlabel $e_1$ at 339 212
\pinlabel $e_2$ at 286 305
\pinlabel $e_3$ at 234 212
\endlabellist
\subfloat[$\tau=-3/4$]{\includegraphics[scale=.30]{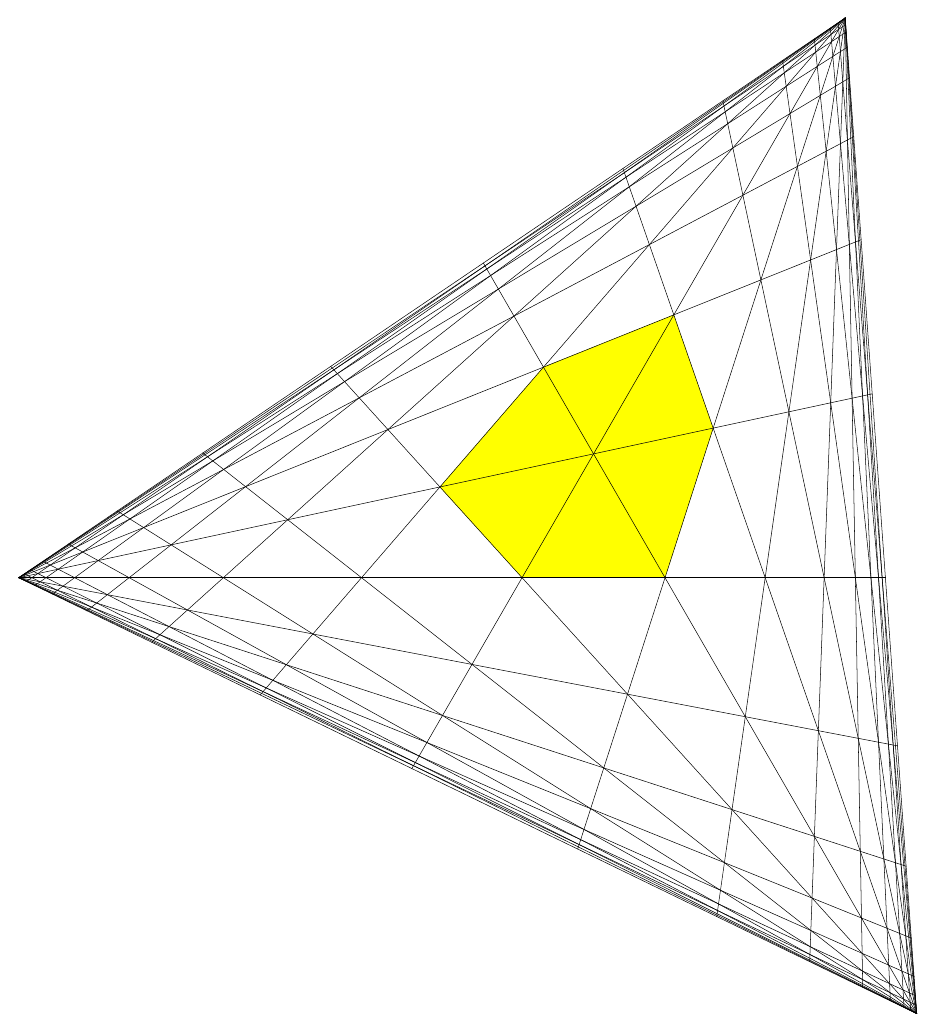}}
\quad
\labellist
\tiny\hair 2pt
\pinlabel $l_1$ at 40 508
\pinlabel $l_2$ at 0 0
\pinlabel $l_3$ at 458 215
\pinlabel $e_1$ at 218 212
\pinlabel $e_2$ at 163 305
\pinlabel $e_3$ at 112 212
\endlabellist
\centering
\subfloat[$\tau=3/4$]{\includegraphics[scale=.30]{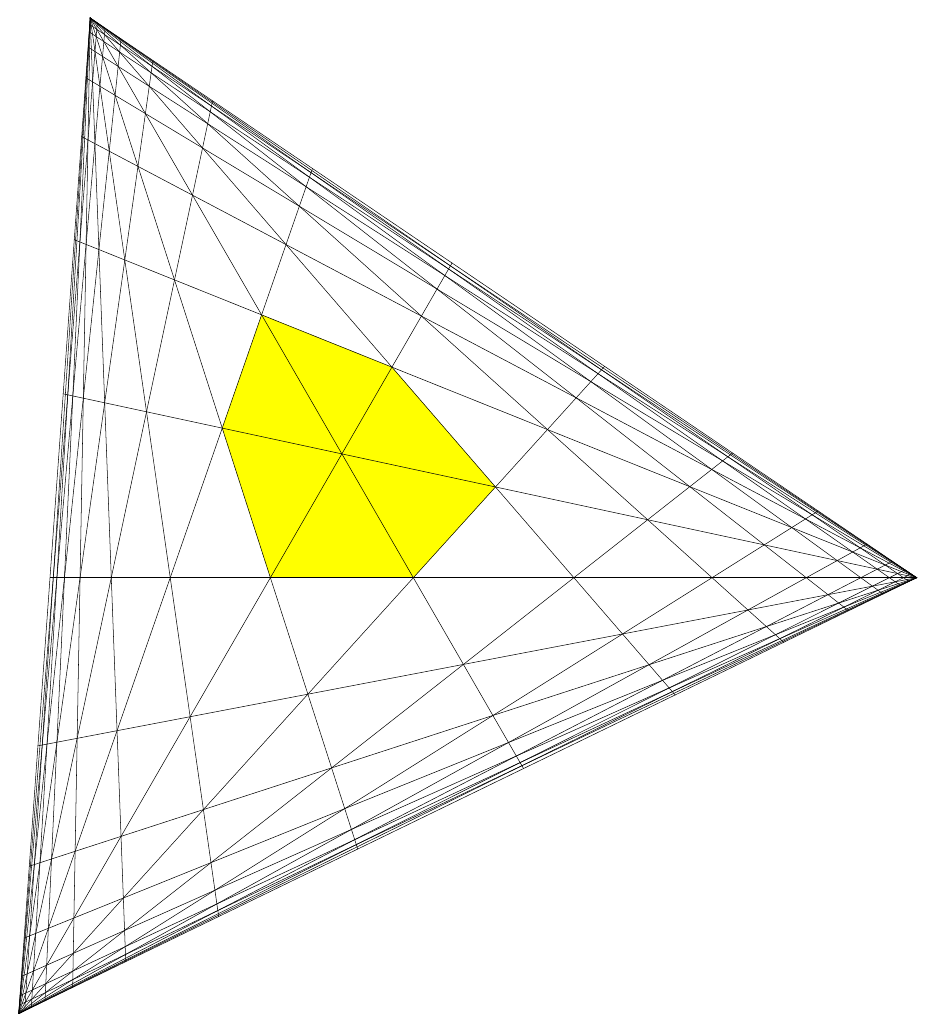}}
\caption{Fundamental domains of dual minimal hex tori}\label{hexTorus}
\end{figure}

The maximal torus subgroup $\Delta_H$ of $\Delta_{3,3,3}$ is generated by three elements (which are translations in the Euclidean structure):
$$
g_1=r_3 r_2 r_3 r_1,\quad g_2=r_1 r_3 r_1 r_2,\quad g_3=r_2 r_1 r_2 r_3.
$$
For $\tau \neq 0$, let $h_\tau$ denote the change of basis taking $(l_1(\tau), l_2(\tau), l_3(\tau))$ to the standard basis.
Then for $i \in \{1,2,3\}$, $h_\tau \zeta_\tau(g_i) h_\tau^{-1} = \exp(\tau \alpha_i)$, where
$$
\alpha_1 = \begin{pmatrix}
-1 & 0 & 0 \\
0 & 0 & 0 \\
0 & 0 & 1\\
\end{pmatrix}, \quad
\alpha_2 = \begin{pmatrix}
1 & 0 & 0 \\
0 & -1 & 0 \\
0 & 0 & 0\\
\end{pmatrix}, \quad
\alpha_3 = \begin{pmatrix}
0 & 0 & 0 \\
0 & 1 & 0 \\
0 & 0 & -1\\
\end{pmatrix}.
$$
It is then clear that for any $\gamma \in \Delta_H$ and natural number $k$,
\begin{equation}\label{eqn:speed-up} h_\tau \zeta_{\tau}(k \gamma)h_\tau^{-1} = h_{k \tau} \zeta_{k \tau} (\gamma) h_{k \tau}^{-1}.\end{equation} Therefore, if $\Delta$ is any hex torus group, the path of representations $\zeta_\tau$ restricted to $k\Delta$ looks the same, up to conjugation, as the $k$--times faster path of representations $\zeta_{k \tau}$ restricted to $\Delta$. Using this we prove:

\begin{lemma}\label{QIsometry}
Let $\mathcal S_1, \mathcal S_2$ be Euclidean regular hex tori with fundamental groups $\Delta_1, \Delta_2 \subset \Delta_{3,3,3}$ and assume that $f \co \mathcal S_1 \to \mathcal S_2$ is a lattice $\tfrac{k_2}{k_1}$--isometry, with $f_* \co \Delta_1 \to \Delta_2$ denoting the isomorphism at the level of fundamental group.
Then for any $\tau \neq 0$, there exists $g \in \SL^\pm_3 \RR$ such that $g \zeta_{\tau/k_1}(\gamma) g^{-1} = \zeta_{\tau/k_2}( f_*(\gamma))$ for all $\gamma \in \Delta_1$.
\end{lemma}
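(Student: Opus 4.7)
The plan is to reduce the statement, via the identity~\eqref{eqn:speed-up}, to a question about the representation $\zeta_\tau$ at the single parameter $\tau$, where the lattice $\tfrac{k_2}{k_1}$-isometry hypothesis produces an honest inner automorphism coming from an element of $\Delta_{3,3,3}$. First I would unpack the definition: by hypothesis there is a hex torus group $\Delta'$ and an element $\delta \in \Delta_{3,3,3}$ (necessarily normalizing $\Delta'$, so that $f' = k_2^{-1} f k_1$ is well-defined as a self-map of $\mathcal S'$) such that $\Delta_1 = k_1 \Delta'$, $\Delta_2 = k_2 \Delta'$, and the induced isomorphism on fundamental groups satisfies $f_\ast(k_1 \gamma') = k_2(\delta \gamma' \delta^{-1})$ for every $\gamma' \in \Delta'$. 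This is a direct transcription of the fact that $f'$ is induced by conjugation by $\delta$, combined with the effect of the natural $k$-scaling map at the level of fundamental groups.

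Next I would apply the identity~\eqref{eqn:speed-up} twice. Taking $k = k_1$ at parameter $\tau/k_1$,
\[
\zeta_{\tau/k_1}(k_1 \gamma') \;=\; h_{\tau/k_1}^{-1}\,h_\tau\,\zeta_\tau(\gamma')\,h_\tau^{-1}\,h_{\tau/k_1},
\]
and symmetrically with $k = k_2$ at parameter $\tau/k_2$, combined with the fact that $\zeta_\tau$ is a homomorphism,
\[
\zeta_{\tau/k_2}\bigl(k_2(\delta \gamma' \delta^{-1})\bigr) \;=\; h_{\tau/k_2}^{-1}\,h_\tau\,\zeta_\tau(\delta)\,\zeta_\tau(\gamma')\,\zeta_\tau(\delta)^{-1}\,h_\tau^{-1}\,h_{\tau/k_2}.
\]
I would then set
\[
g \;:=\; h_{\tau/k_2}^{-1}\,h_\tau\,\zeta_\tau(\delta)\,h_\tau^{-1}\,h_{\tau/k_1},
\]
and verify, by direct cancellation of the inner $h$-factors, that $g\,\zeta_{\tau/k_1}(\gamma)\,g^{-1} = \zeta_{\tau/k_2}(f_\ast \gamma)$ for every $\gamma = k_1 \gamma' \in \Delta_1$. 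A priori $g \in \GL_3 \RR$, but rescaling $g$ by a suitable real scalar (which does not affect the conjugation action, as scalars are central) produces an element of $\SL^\pm_3 \RR$ as required.

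There is no serious obstacle here: the only content is that the family $\{\zeta_\tau\}$ is compatible with the $k$th-power operation on hex torus groups in the precise sense encoded by~\eqref{eqn:speed-up}. Given that compatibility, a lattice $\QQ$-isometry between hex tori lifts tautologically to a projective conjugacy between the corresponding holonomy representations, with the conjugator $g$ built from $\zeta_\tau(\delta)$ sandwiched by the change-of-basis matrices $h_{\tau/k_1}$ and $h_{\tau/k_2}$.
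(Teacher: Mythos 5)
Your proof is correct and takes essentially the same approach as the paper: unpack the lattice $\tfrac{k_2}{k_1}$-isometry hypothesis to produce $\delta \in \Delta_{3,3,3}$ with $f_\ast(k_1\gamma') = k_2(\delta\gamma'\delta^{-1})$, then use the compatibility~\eqref{eqn:speed-up} to transport $\zeta_\tau(\delta)$-conjugation across parameters, arriving at an explicit $g$. The paper applies~\eqref{eqn:speed-up} in a single step with the rational ratio $k_2/k_1$ (writing $f_\ast(\gamma) = \tfrac{k_2}{k_1}\,\delta\gamma\delta^{-1}$ and setting $g = h\,\zeta_{\tau/k_1}(\delta)$), whereas you apply it twice with the integers $k_1$ and $k_2$ separately, routing both restrictions through $\zeta_\tau|_{\Delta'}$ --- a marginally cleaner bookkeeping that avoids the rational power, but the same idea.
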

\begin{proof}
By assumption, there exists $\Delta'$ such that $\Delta_1 = k_1 \Delta'$ and $\Delta_2 = k_2 \Delta'$ and $k_2^{-1} f_* k_1 \co \Delta' \to \Delta'$ is conjugation by an element $\delta \in \Delta_{3,3,3}$. Then for any $\gamma \in \Delta_1$, write $\gamma = k_1 \gamma'$, where $\gamma' \in \Delta'$ and observe:
\begin{align*}
k_2^{-1} f_* k_1 \gamma' &= \delta \gamma' \delta^{-1} \\ \implies \quad\quad\;\; f_* (\gamma) &= k_2 \delta \gamma' \delta^{-1} \\ \implies \zeta_{\tau/k_2} (f_* (\gamma)) &= \zeta_{\tau/k_2} \left( \frac{k_2}{k_1} k_1 \delta \gamma' \delta^{-1}\right)\\ &= h \zeta_{\tau/k_1} (\delta k_1 \gamma' \delta^{-1})h^{-1} \\ &= g \zeta_{\tau/k_1}( \gamma) g^{-1}
\end{align*}
where $h \in \SL^\pm_3 \RR$ is determined by~\eqref{eqn:speed-up} and $g = h \zeta_{\tau/k_1}(\delta)$.
\end{proof}

\subsection{Gluing $3$--manifolds which cover reflection orbifolds}
Let $\mathcal O$ be a finite volume hyperbolic three-orbifold. Each cusp of $\mathcal O$ is naturally the product $\partial \times \RR$ of a Euclidean two-orbifold $\partial$ with an interval. We think of $\mathcal O$ as the interior of an ``orbifold with boundary'', where the boundary includes a copy of each cusp cross-section $\partial$ at infinity. Note that if $\mathcal O$ is a reflection orbifold, then any convex projective structure on $\mathcal O$ induces a convex projective structure on its boundary components $\partial$. Indeed, the fundamental group $\Delta$ of $\partial$ fixes a unique point with eigenvalue one under the holonomy representation of any convex projective structure on $\mathcal O$; the convex projective structure induced on $\partial$ may be seen in the link of this point.
Now, let $\partial \cong \mathcal S_{3,3,3}$ be a boundary component isomorphic to the Euclidean $(3,3,3)$--triangle orbifold. Then, for any manifold covering $M \to \mathcal O$, the torus boundary components $\widetilde \partial_1, \ldots, \widetilde \partial_n$ of $M$ which cover $\partial$ are each naturally endowed with a regular Euclidean hex structure.

\begin{theorem}\label{DifferentCover}
For $i \in \{1, \ldots, m\}$, let $M_i$ be a hyperbolic three-manifold of finite volume which covers a reflection orbifold $\mathcal{O}_i$. For $\alpha \in \{1, \ldots, n_i\}$, let $(\widetilde{\partial_i})_\alpha$ be a torus component of $\partial M_i$ which covers a component $\partial_i \cong \mathcal S_{3,3,3}$ of $\partial \mathcal{O}_i$.
Let $\mathscr F$ be a collection of homeomorphisms of the form $f^{\alpha, \beta}_{i,j} \co (\widetilde{\partial_i})_\alpha \to (\widetilde{\partial_j})_\beta$ which identify the boundary components $\{(\widetilde \partial_i)_\alpha\}_{i,\alpha}$ in disjoint pairs.
Assume that:
\begin{enumerate}
\item There are natural numbers $k_1, \ldots, k_m$ such that any $f^{\alpha, \beta}_{i,j} \in \mathscr F$ is a lattice $\frac{k_j}{k_i}$--isometry of the induced regular hex torus structure on $(\widetilde{\partial_i})_\alpha$, and  $(\widetilde{\partial_j})_\beta$.
\item For each $i \in \{1, \ldots, m\}$, there exists a non-trivial continuous path $c_i : (-\epsilon_i,\epsilon_i) \rightarrow \mathfrak{C}(\mathcal{O}_i)$ passing through the finite volume complete hyperbolic structure on $\mathcal O_i$ at $c_i(0)$ such that the convex projective structure on $\partial_i$ induced by $c_i(t)$ is not constant.
\end{enumerate}
Then there exists a properly convex projective structure on the manifold obtained by gluing together the building blocks $M_1, \ldots, M_m$ using the gluing maps in $\mathscr F$.
\end{theorem}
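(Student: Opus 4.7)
The plan is to reduce the theorem to an application of the Convex Gluing Theorem~\ref{thm:convex-gluing} by choosing compatible convex projective structures on each building block $\mathcal O_i$ so that, along each gluing, the holonomy matching condition~\eqref{eqn:matching} is automatically arranged by Lemma~\ref{QIsometry}.

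First, for each $i$, denote by $\tau_i:(-\epsilon_i,\epsilon_i)\to\RR$ the continuous map sending $t$ to the real parameter of the convex projective structure $\zeta_{\tau_i(t)}$ on $\partial_i\cong\mathcal S_{3,3,3}$ induced by $c_i(t)$; hypothesis~(2) says that $\tau_i$ is non-constant, and the fact that $c_i(0)$ is the complete hyperbolic structure gives $\tau_i(0)=0$. Hence the image $J_i=\tau_i((-\epsilon_i,\epsilon_i))$ is a non-degenerate connected interval containing $0$ in its closure, so for all sufficiently small $a\ne 0$ (of appropriate sign) we have $a/k_i\in J_i$ for every $i\in\{1,\dots,m\}$. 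Fix such an $a$ and, for each $i$, a time $t_i$ with $\tau_i(t_i)=a/k_i$; the corresponding structure $c_i(t_i)\in\mathfrak C(\mathcal O_i)$ lifts by the covering $M_i\to\mathcal O_i$ to a convex projective structure $\eta_i$ on $M_i$.

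Next, I would verify that for $a$ sufficiently close to $0$ the structure $\eta_i$ realizes each $(\widetilde{\partial_i})_\alpha$ as a \emph{principal totally geodesic torus}. This is the exact orbifold analog of Theorem~\ref{thm:deform}: at the boundary of $\mathcal O_i$ the hex cusp opens into a totally geodesic hexagonal annulus governed by the eigenvectors described in Section~\ref{subsec:slice}, and the middle-eigenvalue condition and the structural results of Section~\ref{boundarygeom} guarantee that the pullback by the cover yields a principal totally geodesic torus boundary on $M_i$. With this verified, I invoke Lemma~\ref{QIsometry}: because $f^{\alpha,\beta}_{i,j}$ is a lattice $\tfrac{k_j}{k_i}$-isometry (hypothesis~(1)), there exists $g_{i,j}^{\alpha,\beta}\in\SL_3^\pm\RR$ conjugating $\zeta_{a/k_i}$ restricted to $\pi_1(\widetilde{\partial_i})_\alpha$ onto $\zeta_{a/k_j}\circ(f^{\alpha,\beta}_{i,j})_*$. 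Since the peripheral holonomy of the 3-dimensional structure acts on the totally geodesic triangle exactly as the 2-dimensional hex torus holonomy and fixes the transverse vertex $p_4$ with eigenvalue one, the element $g_{i,j}^{\alpha,\beta}$ extends to an element of $\PGL_4\RR$ realizing the holonomy matching condition~\eqref{eqn:matching} for the 3-dimensional peripheral representations (after conjugating so that the two transverse vertices are identified).

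With the matching condition arranged for every gluing simultaneously, Theorem~\ref{thm:convex-gluing} applies and produces the desired properly convex projective structure on the glued manifold. The main technical obstacle is the step that promotes the 2-dimensional conjugation from Lemma~\ref{QIsometry} to a 3-dimensional one in $\PGL_4\RR$ matching the peripheral 3-manifold holonomies; this is where the compatibility of the transverse eigenvector $p_4$ (the middle-eigenvalue data) with the deformation must be tracked carefully, and where the geometric description of $\Omega_{min}$ from Section~\ref{boundarygeom} is essential. A secondary point is verifying that the chosen common scale $a$ can be realized inside each $J_i$ simultaneously; this follows from the openness (on at least one side of zero) of each $J_i$, and in the generic case where $\tau_i'(0)\ne 0$, both signs are available and the argument is automatic.
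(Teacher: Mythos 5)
Your proposal takes essentially the same approach as the paper: choose times $t_i$ so that the induced structure on $\partial_i \cong \mathcal S_{3,3,3}$ is $\zeta_{\tau_i}$ with $\tau_i = a/k_i$ for a common small $a$, observe that the pullback to $M_i$ has principal totally geodesic boundary, apply Lemma~\ref{QIsometry} to produce the matching conjugators in $\SL^\pm_3 \RR$, promote these to $\PGL_4\RR$ using the eigenvalue-one transverse vertex, and conclude by Theorem~\ref{thm:convex-gluing}. The paper justifies the promotion step by noting that, since $\pi_1 \mathcal O_i$ is generated by reflections, the restriction $\rho_{t_i}|_{(\Delta_{3,3,3})_i}$ takes the block form $\zeta_{\tau_i}\oplus \mathrm{id}_1$ in a suitable basis, which directly gives diagonalizability and the middle eigenvalue condition; your write-up identifies this as the main technical point but leaves it as a statement to ``track carefully'' rather than giving the reflection-group argument. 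The paper also simply asserts that $\tau_i$ varies continuously in a neighborhood $[0,\delta_i)$ and picks $\mu \in \RR^+$ with $(\tau_1,\dots,\tau_m)=\mu(1/k_1,\dots,1/k_m)$, glossing over the possible one-sidedness/sign issue that you correctly flag as a secondary concern.
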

\begin{proof}

By assumption, the convex projective structure on $\partial_i \cong \mathcal S_{3,3,3}$ is not constant, so for some $t_i \neq 0$, the structure is isomorphic to the non-Euclidean convex projective structure associated to some $\tau_i \neq 0$ described in the previous section. It follows easily from the fact that the fundamental group of $\mathcal O_i$ is generated by reflections that the restriction of the holonomy representation $\rho_{t_i}$ for $c_i(t_i)$ to the fundamental group $(\Delta_{3,3,3})_i$ of $\partial_i$ has a fixed point $p_i$ with eigenvalue one and that in a suitable basis $\rho_{t_i} \big |_{(\Delta_{3,3,3})_i} = \zeta_{\tau_i} \oplus id_1$, where $id_1$ denotes the one-dimensional trivial representation. Hence, for each $\alpha \in \{1,\ldots,n_i\}$ the restrictions of $\rho_{t_i}$ to the fundamental group $(\Delta_i)_\alpha$ of $(\widetilde \partial_i)_\alpha$ is diagonalizable and satisfies the middle eigenvalue condition. Therefore the pull-back of $c_i(t_i)$ to $M_i$ induces a convex
projective structure which after
removing a collar neighborhood, as in Section~\ref{boundarygeom}, has principal totally geodesic boundary along its (relevant) components.

Further, by continuity, $\tau_i$ may be varied continuously in a small neighborhood $[0, \delta_i)$. Hence, we may choose $t_1, \ldots, t_m$ so that $(\tau_1, \ldots, \tau_m) = \mu (1/k_1, \ldots, 1/k_m)$ for some small $\mu \in \RR^+$. It now follows from the assumption~(1) and Lemma~\ref{QIsometry} that the maps $f^{\alpha, \beta}_{i,j}$ of $\mathscr F$ satisfy the holonomy matching condition~(\ref{eqn:matching}) and Theorem~\ref{thm:convex-gluing} implies the result.
\end{proof}

\subsection{Application: Two-colorable tetrahedral hyperbolic manifolds}

A finite volume hyperbolic three-manifold is \emph{tetrahedral} if it admits an ideal triangulation consisting of regular ideal tetrahedra.
A triangulated three-manifold is \emph{two-colorable} if the tetrahedra can be colored using two colors so that no two adjacent tetrahedra have the same color.
Two-colorability may be easily checked by \verb"Regina"~\cite{Regina}. For example, the figure eight knot complement and its sister manifold are tetrahedral and two-colorable.
Recently Fominykh--Garoufalidis--Goerner--Tarkaev--Vesnin~\cite{FGGTV} gave a census of all orientable tetrahedral manifolds with at most 25 tetrahedra.

It is well-known (see e.g. \cite[Remark 5.5]{FGGTV}) that every two-colorable tetrahedral manifold $M$ is a cover of the Bianchi Orbifold $\mathcal O_3 = \mathrm{PSL}(2,\ZZ[\zeta]) \backslash \HH^3$ of discriminant $D = -3$, where $\zeta= (1+\sqrt{-3})/2$. Further $\mathcal O_3$ covers the reflection orbifold $\mathcal O^{2,2,3}_{3,3,3}$ determined by reflections in the faces of the partially ideal tetrahedron with dihedral angles $\pi/2$, $\pi/2$, $\pi/3$ at the three edges bounding one face and $\pi/3$ along the other edges, incident on the unique ideal vertex. The reflection orbifold $\mathcal O^{2,2,3}_{3,3,3}$ has one cusp and its cross section is the Euclidean $(3,3,3)$--triangle orbifold. Further, it follows from Benoist's work~\cite{BenoistCD4} that the hyperbolic structure on the orbifold $\mathcal O^{2,2,3}_{3,3,3}$ may be deformed non-trivially to nearby convex projective structures. So the deformation hypothesis~(2) of Theorem~\ref{DifferentCover} is satisfied for tetrahedral two-colorable manifolds, and therefore the theorem may be applied given appropriate gluing maps between torus boundary components that respect the hex structure as in hypothesis (1). Indeed, such gluing maps seem to be easy to find. We now give some examples.

Of the 29 orientable tetrahedral manifolds with at most $8$ tetrahedra, 20 are two-colorable, and each of those has at most 2 cusps. In Table \ref{tab:Census} below, these 20 manifolds are listed along with the hex cusp shapes of the cusps.
\begin{table}[ht!]
\centering
{\renewcommand{\arraystretch}{1.5}
\renewcommand{\tabcolsep}{4pt}
\begin{tabular}{clcc|clcc}
Name & Name \cite{FGGTV} & $\partial_1$ & $\partial_2$ & Name & Name \cite{FGGTV} & $\partial_1$ & $\partial_2$\\
\hline
\verb"m003" & $\mathrm{otet02}_{0000}$ & $\left[\begin{smallmatrix}
2 & 0 \\
0 & 2 \\
\end{smallmatrix}\right]$ &

& \verb"t12845" & $\mathrm{otet08}_{0001}$ & $\left[\begin{smallmatrix}
3 & 2 \\
1 & 5 \\
\end{smallmatrix}\right]$ & $\left[\begin{smallmatrix}
1 & 0 \\
0 & 3 \\
\end{smallmatrix}\right]$\\

\verb"m004" & $\mathrm{otet02}_{0001}$ & $\left[\begin{smallmatrix}
1 & 0 \\
0 & 4 \\
\end{smallmatrix}\right]$ &

&\verb"t12840" & $\mathrm{otet08}_{0002}$ & $\left[\begin{smallmatrix}
3 & 0 \\
2 & 4 \\
\end{smallmatrix}\right]$ & $\left[\begin{smallmatrix}
1 & 0 \\
0 & 4 \\
\end{smallmatrix}\right]$\\

\verb"m202" & $\mathrm{otet04}_{0000}$ & $\left[\begin{smallmatrix}
3 & 1 \\
2 & 3 \\
\end{smallmatrix}\right]$ & $\left[\begin{smallmatrix}
1 & 0 \\
0 & 1 \\
\end{smallmatrix}\right]$ &\verb"t12842" & $\mathrm{otet08}_{0003}$ & $\left[\begin{smallmatrix}
3 & 0 \\
2 & 4 \\
\end{smallmatrix}\right]$ & $\left[\begin{smallmatrix}
2 & 1 \\
0 & 2 \\
\end{smallmatrix}\right]$\\

\verb"m203" & $\mathrm{otet04}_{0001}$ & $\left[\begin{smallmatrix}
2 & 1 \\
0 & 3 \\
\end{smallmatrix}\right]$ & $\left[\begin{smallmatrix}
1 & 0 \\
0 & 2 \\
\end{smallmatrix}\right]$ &\verb"t12843" & $\mathrm{otet08}_{0004}$ & $\left[\begin{smallmatrix}
3 & 2 \\
2 & 6 \\
\end{smallmatrix}\right]$ & $\left[\begin{smallmatrix}
1 & 0 \\
0 & 2 \\
\end{smallmatrix}\right]$\\

\verb"m206" & $\mathrm{otet04}_{0002}$ & $\left[\begin{smallmatrix}
2 & 0 \\
0 & 4 \\
\end{smallmatrix}\right]$ &

&\verb"t12844" & $\mathrm{otet08}_{0005}$ & $\left[\begin{smallmatrix}
3 & 2 \\
2 & 6 \\
\end{smallmatrix}\right]$ & $\left[\begin{smallmatrix}
1 & 0 \\
0 & 2 \\
\end{smallmatrix}\right]$\\

\verb"m207" & $\mathrm{otet04}_{0003}$ & $\left[\begin{smallmatrix}
3 & 1 \\
1 & 3 \\
\end{smallmatrix}\right]$ &

&\verb"t12837" & $\mathrm{otet08}_{0006}$ & $\left[\begin{smallmatrix}
3 & 1 \\
2 & 6 \\
\end{smallmatrix}\right]$ & \\

\verb"s959" & $\mathrm{otet06}_{0002}$ & $\left[\begin{smallmatrix}
3 & 0 \\
0 & 3 \\
\end{smallmatrix}\right]$ & $\left[\begin{smallmatrix}
2 & 1 \\
1 & 2 \\
\end{smallmatrix}\right]$ &\verb"t12839" & $\mathrm{otet08}_{0007}$ & $\left[\begin{smallmatrix}
4 & 2 \\
0 & 4 \\
\end{smallmatrix}\right]$ & \\

\verb"s961" & $\mathrm{otet06}_{0003}$ & $\left[\begin{smallmatrix}
3 & 0 \\
2 & 4 \\
\end{smallmatrix}\right]$ &

&\verb"t12838" & $\mathrm{otet08}_{0008}$ & $\left[\begin{smallmatrix}
4 & 2 \\
2 & 5 \\
\end{smallmatrix}\right]$ & \\

\verb"s960" & $\mathrm{otet06}_{0004}$ & $\left[\begin{smallmatrix}
4 & 2 \\
2 & 4 \\
\end{smallmatrix}\right]$ &

&\verb"t12836" & $\mathrm{otet08}_{0009}$ & $\left[\begin{smallmatrix}
3 & 2 \\
0 & 3 \\
\end{smallmatrix}\right]$ & $\left[\begin{smallmatrix}
2 & 1 \\
1 & 4 \\
\end{smallmatrix}\right]$ \\

\verb"s958" & $\mathrm{otet06}_{0006}$ & $\left[\begin{smallmatrix}
3 & 2 \\
0 & 4 \\
\end{smallmatrix}\right]$ &

&\verb"t12841" & $\mathrm{otet08}_{0010}$ & $\left[\begin{smallmatrix}
4 & 2 \\
2 & 4 \\
\end{smallmatrix}\right]$ & $\left[\begin{smallmatrix}
2 & 0 \\
0 & 2 \\
\end{smallmatrix}\right]$\\
\hline
\end{tabular}}
    \caption{Hex torus cusps of orientable, two-colorable tetrahedral manifolds with at most $8$ tetrahedra}
    \label{tab:Census}
\end{table}
The hex torus structure at the cusps of tetrahedral manifolds can be calculated easily using \verb"SnapPy"~\cite{SnapPy}.
In Figure~\ref{Cusp}(A), a fundamental triangle for the cusp of $\mathcal O^{2,2,3}_{3,3,3}$ is drawn in yellow, while the cusp pattern of the tessellation of $\HH^3$ by regular ideal tetrahedra is drawn in black. The hex torus structure of one boundary component $\partial_1$ of the tetrahedral two-colorable manifold {\tt m207} is shown in Figure~\ref{Cusp}(B): $\Delta_H$ (resp. the hex lattice $\Delta_1$ of $\partial_1$) corresponds to vertices of triangle with black edges (resp. red dots), and two generators of $\Delta_{1}$ are shown as pink arrows; we easily read of the hex cusp shape: $A_{\Delta_1} = \left[\begin{smallmatrix}
3 & 1 \\
1 & 3 \\
\end{smallmatrix}\right]$.

\begin{figure}[ht!]
\labellist
\tiny\hair 2pt
\pinlabel $v_1$ at 141 156
\pinlabel $v_2$ at 200 115
\pinlabel $v_3$ at 200 195
\endlabellist
\subfloat[]{\includegraphics[scale=.39]{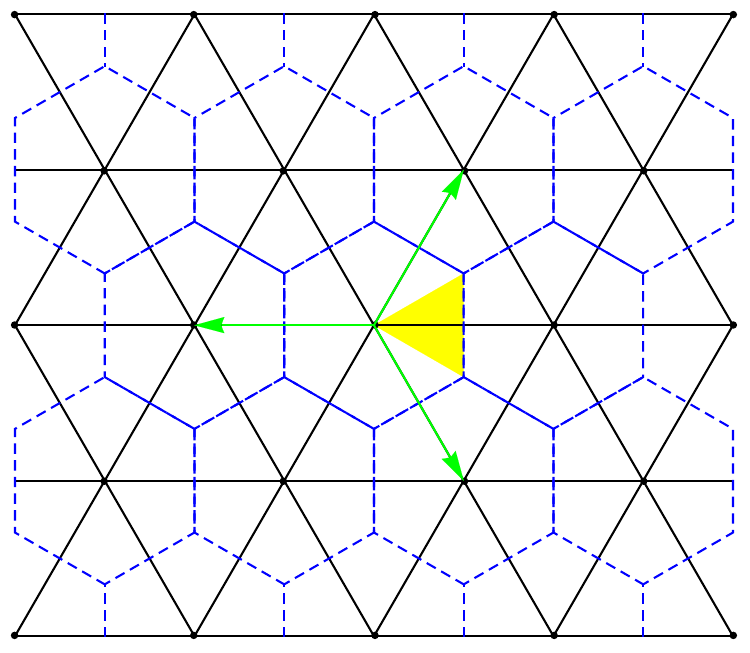}}
\quad\quad
\labellist
\tiny\hair 2pt
\pinlabel $v_1$ at 155 212
\pinlabel $v_2$ at 190 190
\endlabellist
\centering
\subfloat[]{\includegraphics[scale=.35]{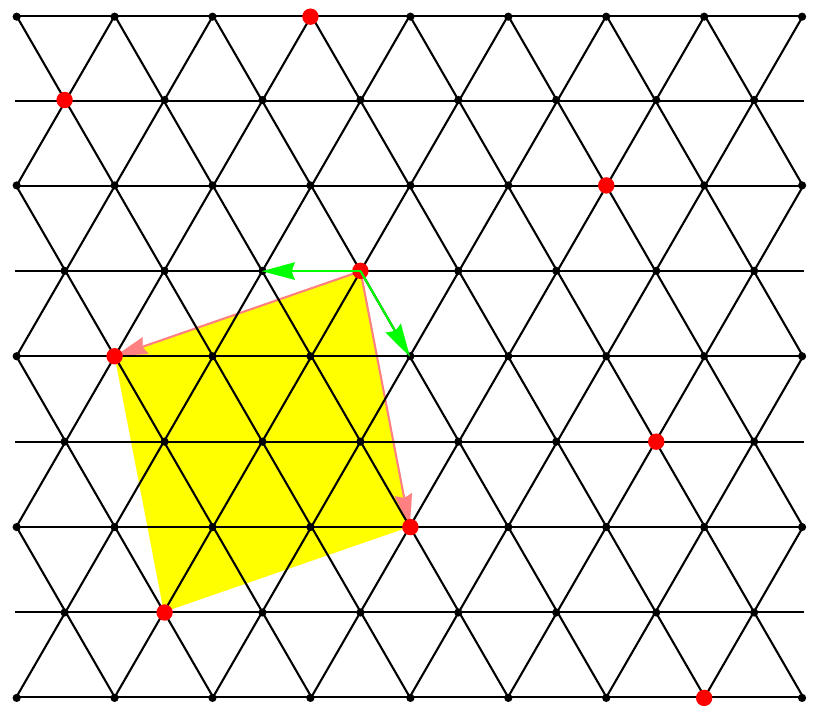}}
\caption{(A) Hex torus cusps of orientable two-colorable tetrahedral manifolds, and (B) A hex torus cusp $\partial_1$ of {\tt m207}}\label{Cusp}
\end{figure}

From Table~\ref{tab:Census}, we may easily read off several new examples of closed three-manifolds that admit properly convex projective structures.
\begin{theorem}\label{thm:some-examples}
In each line of Table~\ref{tab:thm} below, there exist gluing maps identifying the boundary components of the given building blocks in pairs so that the resulting closed three-manifold admits a properly convex projective structure. The gluing maps may be read off from Table~\ref{tab:Census}.
\begin{table}[ht!]
\begin{center}
\begin{tabular}{|c|c|}
\hline
Building blocks & Gluing maps\\
\hline $\mathtt{m003}, \mathtt{s959}, \mathtt{s960}$ & $\partial_1 \mathtt{m003} \to \partial_1 \mathtt{s959}$\\ & $\partial_2 \mathtt{s959} \to \partial_1 \mathtt{s960}$\\ \hline
$\mathtt{m003}, \mathtt{t12841}, \mathtt{s960}$ &$\partial_1 \mathtt{m003} \to \partial_2 \mathtt{t12841}$\\ & $\partial_1 \mathtt{t12841} \to \partial_1 \mathtt{s960}$\\ \hline
$\mathtt{m004}, \mathtt{t12840}, \mathtt{s961}$ &$\partial_1 \mathtt{m004} \to \partial_2 \mathtt{t12840}$\\ & $\partial_1 \mathtt{t12840} \to \partial_1 \mathtt{s961}$\\ \hline
$\mathtt{t12843}, \mathtt{t12844}$ &$\partial_1 \mathtt{t12843} \to \partial_1 \mathtt{t12844}$\\ & $\partial_2 \mathtt{t12843} \to \partial_2 \mathtt{t12844}$\\ \hline
$\mathtt{t12842}, \mathtt{t12839}, \mathtt{s961}$ &$\partial_1 \mathtt{t12842} \to \partial_1 \mathtt{s961}$\\ & $\partial_2 \mathtt{t12842} \to \partial_1 \mathtt{t12839}$\\ \hline
\end{tabular}
\end{center}
\caption{Building blocks and boundary pairings for Theorem~\ref{thm:some-examples}}
\label{tab:thm}
\end{table}
\end{theorem}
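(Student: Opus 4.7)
The plan is to apply Theorem \ref{DifferentCover} to each row of Table \ref{tab:thm}, taking every building block $M_i$ as a cover of the common reflection orbifold $\mathcal{O}^{2,2,3}_{3,3,3}$. Since every two-colorable tetrahedral manifold covers this orbifold, this choice is available. Hypothesis (2) of Theorem \ref{DifferentCover} --- the existence of a non-trivial path in $\mathfrak{C}(\mathcal{O}^{2,2,3}_{3,3,3})$ whose induced structure on the cusp cross-section is non-constant --- is established in the paragraph immediately preceding the statement of Theorem \ref{thm:some-examples} via Benoist's deformation construction, so it holds uniformly for all rows and need not be revisited row by row.

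The substantive step is to verify Hypothesis (1) row by row: one must exhibit natural numbers $k_1, \ldots, k_m$ and, for each prescribed gluing $f^{\alpha,\beta}_{i,j}$ in the row, a lattice $\tfrac{k_j}{k_i}$-isometry between $(\widetilde{\partial_i})_\alpha$ and $(\widetilde{\partial_j})_\beta$. By Lemma \ref{lem:hex-shape} this reduces to finding $B \in \SL^\pm_2 \ZZ$ and $C \in D_6$ with
\[
k_j\, A_{\Delta_i^\alpha} \;=\; k_i\, B\, A_{\Delta_j^\beta}\, C,
\]
where each hex cusp shape matrix $A$ is read directly from Table \ref{tab:Census}. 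By Remark \ref{rem:finitely-many} the determinant identities $k_j^2 \det A_{\Delta_i^\alpha} = k_i^2 \det A_{\Delta_j^\beta}$ are necessary; for each row of Table \ref{tab:thm} the system of these identities (one equation per gluing in the row) is consistent and pins down the tuple $(k_1,\ldots,k_m)$ up to a common rescaling.

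Carrying this out is a quick numerical check that I expect to be routine. For instance, in the first row $\{\mathtt{m003}, \mathtt{s959}, \mathtt{s960}\}$, the four relevant determinants $4, 9, 3, 12$ force $(k_{\mathtt{m003}}, k_{\mathtt{s959}}, k_{\mathtt{s960}}) = (2,3,6)$, after which both matrix equations collapse to trivial identities solved by $B = C = I$. The remaining four rows behave analogously: the entries of Table \ref{tab:Census} have been selected so that, after clearing the common integer factors $k_i$ determined by the determinant constraints, the matrices on the two sides of each gluing agree on the nose, so $B = C = I$ again suffices. The only conceivable obstacle is that no pair $(B,C) \in \SL^\pm_2 \ZZ \times D_6$ might satisfy the matrix equation even when the determinant condition holds; by Remark \ref{rem:finitely-many} this is a finite search (essentially over the twelve elements of $D_6$), but it does not in fact arise for any gluing in Table \ref{tab:thm}. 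Once Hypotheses (1) and (2) are confirmed row by row, Theorem \ref{DifferentCover} directly produces the desired properly convex projective structure on each resulting closed manifold.
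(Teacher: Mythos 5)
Your proof is correct and takes essentially the same route as the paper: apply Theorem~\ref{DifferentCover} after establishing hypothesis~(1) via Lemma~\ref{lem:hex-shape}, using the hex cusp shapes from Table~\ref{tab:Census}, with hypothesis~(2) already handled uniformly by the preceding discussion of Benoist's deformation of $\mathcal O^{2,2,3}_{3,3,3}$. Your observation that the $k_i$ are pinned down by the determinant constraints of Remark~\ref{rem:finitely-many} and that the matrix equations then collapse with $B=C=I$ is exactly the explicit verification that the paper leaves implicit when it cites the lattice $\tfrac{3}{2}$- and $\tfrac{2}{1}$-isometries for the first row.
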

\begin{proof}
Examination of Table~\ref{tab:Census} verifies that each given identification of boundary tori satisfies the hypotheses of Lemma~\ref{lem:hex-shape} and therefore a lattice $\mathbb Q$--isometry exists between the given boundary components.  For example, $\partial_1 ({\tt m003})$ is lattice $\tfrac{3}{2}$--isometric to $\partial_1 ({\tt s959})$ and
$\partial_2 ({\tt s959})$ is lattice $\tfrac{2}{1}$--isometric to $\partial_1 ({\tt s960})$, and so on.
The result follows from Theorem~\ref{DifferentCover}.
\end{proof}
\begin{remark}
As in Remark~\ref{rem:finitely-many}, there may be more than one (but no more than $12$) gluing map identifying a given pair of boundary components in Theorem~\ref{thm:some-examples}. Different gluing maps might or might not produce homeomorphic manifolds. We do not check carefully here the number of different homeomorphism types of closed three-manifolds for which Theorem~\ref{thm:some-examples} gives properly convex projective structures. It is at least five, but seems likely to be more.
\end{remark}

\subsection{Application: The cusp covering conjecture}

Finally, we briefly mention an application of the Cusp Covering Conjecture, proven by Wise (see \cite{Wis} and \cite[Corollary 16.15]{Wis1}).
\begin{theorem}[Cusp Covering Conjecture, Wise~\cite{Wis, Wis1}]
Let $M$ be a finite volume hyperbolic $3$--manifold and let $\partial M = \partial_1 \sqcup \partial_2 \sqcup \dotsm \sqcup \partial_r$.
There exist finite covers $\partial_i^\circ \rightarrow \partial_i$ such that for any further finite covers $\partial_i^c \rightarrow \partial_i^\circ$,
there is a finite cover $\widehat{M} \rightarrow M$ such that for each $i$, each cover of $\partial_i$ appearing on the boundary of $\widehat{M}$ is isomorphic to $\partial_i^c \to \partial_i$.
\end{theorem}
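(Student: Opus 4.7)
The plan is to combine the Agol--Wise theorem, which asserts that $\pi_1(M)$ is virtually compact special, with Wise's Malnormal Special Quotient Theorem (MSQT). This exploits the fact that the peripheral subgroups $P_i := \pi_1(\partial_i)$ form an almost malnormal collection of relatively quasiconvex subgroups of $\pi_1(M)$. After passing to a finite cover that may be absorbed into the output, one may assume $\pi_1(M)$ is itself compact special in the relatively hyperbolic sense, with the $P_i$ as a malnormal collection of full relatively quasiconvex subgroups.

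First I would invoke the MSQT to produce, for each $i$, a finite-index normal subgroup $P_i^\circ \trianglelefteq P_i$ with the following key property: for any further choices of finite-index subgroups $P_i^c \le P_i^\circ$, the relative Dehn-filling quotient $\overline{\Gamma} := \pi_1(M)/N$, where $N$ is the normal closure of $\bigcup_i P_i^c$ in $\pi_1(M)$, is a hyperbolic, virtually compact special (and in particular residually finite) group in which the image of each $P_i$ is a finite group $F_i \cong P_i/P_i^c$. The initial peripheral covers $\partial_i^\circ \to \partial_i$ demanded in the statement are taken to correspond to these $P_i^\circ$.

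Second, I would use residual finiteness of the virtually special $\overline{\Gamma}$ to find a finite-index subgroup $\overline{K} \le \overline{\Gamma}$ whose intersection with each $F_i$ is trivial; there are only finitely many conjugacy classes to kill (one per cusp) by the almost malnormality of $\{P_i\}$ in $\pi_1(M)$, so this is a finite separability problem. Pulling $\overline{K}$ back to a finite-index subgroup $\widehat{\Gamma} \le \pi_1(M)$, the cover $\widehat{M} := \widetilde{M}/\widehat{\Gamma}$ satisfies $\widehat{\Gamma} \cap g P_i g^{-1} = g P_i^c g^{-1}$ for every $g \in \pi_1(M)$ giving rise to a boundary component, and thus each boundary component of $\widehat{M}$ lying above $\partial_i$ is isomorphic to $\partial_i^c \to \partial_i$, as required.

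The main obstacle is executing this control simultaneously across all $r$ peripheral classes and their infinitely many conjugates in $\pi_1(M)$. Separability of each $P_i$ individually would handle one cusp at a time, but could introduce unintended finer or coarser covers at other boundary components, or extra boundary components whose covers are strictly intermediate between $\partial_i$ and $\partial_i^c$. The MSQT is exactly the tool that packages all peripheral fillings at once into a single virtually special hyperbolic quotient, and it is where the bulk of Wise's technology lives; the subsequent appeal to residual finiteness is comparatively routine. The most delicate input is verifying the MSQT hypotheses in the cusped setting, which requires the relatively hyperbolic version of cubulation and virtual specialness for finite-volume hyperbolic three-manifold groups (Agol, Bergeron--Wise, Wise, Groves--Manning).
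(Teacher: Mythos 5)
This theorem is not proved in the paper; it is quoted as a result of Wise (\cite{Wis}, \cite[Corollary 16.15]{Wis1}), so there is no in-paper argument to compare against. Your sketch does follow the intended strategy in Wise's work: the Malnormal Special Quotient Theorem furnishes the initial peripheral covers $\partial_i^\circ$, and residual finiteness of the resulting virtually special Dehn-filling quotient produces $\widehat M$.

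There is, however, a gap in the final step as you have written it. You choose a finite-index $\overline K \le \overline\Gamma$ with $\overline K \cap F_i = 1$ for each $i$, and then assert that the pullback $\widehat\Gamma := \phi^{-1}(\overline K)$ satisfies $\widehat\Gamma \cap gP_ig^{-1} = gP_i^c g^{-1}$ for \emph{every} $g$ yielding a boundary component. That conclusion requires controlling $\overline K \cap \overline g F_i\overline g^{-1}$ for all $\overline g\in\overline\Gamma$, not just $\overline g = 1$; as stated you have only handled the component corresponding to the trivial double coset $\widehat\Gamma\, 1\, P_i$. The fix is to take $\overline K$ to be \emph{normal} in $\overline\Gamma$ (replace it by its normal core; residual finiteness already allows you to pick $\overline K$ normal). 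Then $\widehat\Gamma$ is normal in $\pi_1(M)$, so $g^{-1}\widehat\Gamma g\cap P_i = \widehat\Gamma\cap P_i = P_i^c$ for all $g$, which is exactly what ``each cover of $\partial_i$ appearing on the boundary of $\widehat M$ is isomorphic to $\partial_i^c\to\partial_i$'' requires. Your appeal to almost malnormality in this second step is also slightly off: what makes the separability problem finite is simply that each $F_i\cong P_i/P_i^c$ is a finite group (one per cusp); almost malnormality was already spent verifying the MSQT hypotheses. Finally, it is worth noting that since each $P_i\cong\ZZ\times\ZZ$ is abelian, every finite-index $P_i^c\le P_i^\circ$ is automatically normal in $P_i$, so the normality hypothesis on the filling kernels in the MSQT is automatic here, and a finite-index subgroup of $\ZZ^2$ determines its torus cover up to isomorphism rather than merely up to conjugacy, which is needed for the clean statement of the conclusion.
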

Let $\mathcal O$ be a reflection orbifold with one cusp $\partial \cong \mathcal S_{3,3,3}$ satisfying the hypothesis~(2) of Theorem~\ref{DifferentCover} and let $M$ be a manifold cover. It follows easily from the Cusp Covering Conjecture that there is a finite cover $\widehat M$ of $M$ (in fact infinitely many such) so that the inclusion of the fundamental group $\Delta^c_i$ of each boundary component $\partial^c_i$ into the fundamental group $\Delta_{3,3,3}$ of $\partial$ has the same image. Hence, we may glue together in pairs the boundary components of $\widehat M$ via a lattice isometry of the hex torus structure. The resulting manifolds admit convex projective structures by Theorem~\ref{DifferentCover}. We may also arrange that for each $i$, the image of some $\Delta^c_i \hookrightarrow \Delta_{3,3,3}$ is a multiple $k \Delta'$ of the fundamental group of any given hex torus subgroup $\Delta'$. Let $\Delta'$ to be hex lattice isomorphic to, for example, the hex torus fundamental group of the 
boundary torus $\partial M'$ of some one-cusped manifold $M'$ in Table~\ref{tab:Census}. Then any boundary component of the resulting cover $\widehat M$ of $\mathcal O$ may be glued to the boundary of a copy of $M'$ by a lattice $k$--isometry. Let $N$ be a manifold obtained by gluing together some pairs of boundary components of $\widehat M$ via hex lattices isometries and by gluing on copies of $M'$ with hex lattice $k$--isometries to the remaining boundary components. Then $N$ admits a properly convex projective structure by Theorem~\ref{DifferentCover}. This gives one way to obtain many interesting examples of closed non-hyperbolic three-manifolds $N$ which admits convex projective structures.

%\bibliographystyle{plain}
%\bibliography{bibliography}

\end{document}